\numberwithin{equation}{section}
\newcommand{\IE}{\mathbb{E}}
\newcommand{\IN}{\mathbb{N}}
\newcommand{\IP}{\mathbb{P}}
\newcommand{\IQ}{\mathbb{Q}}
\newcommand{\IR}{\mathbb{R}}
\newcommand{\cE}{\mathcal{E}}
\newcommand{\cF}{\mathcal{F}}
\newcommand{\cH}{\mathcal{H}}
\newcommand{\cI}{\mathcal{I}}
\newcommand{\cK}{\mathcal{K}}
\newcommand{\cL}{\mathcal{L}}
\newcommand{\cQ}{\mathcal{Q}}
\newcommand{\cS}{\mathcal{S}}
\newcommand{\cY}{\mathcal{Y}}
\newcommand{\cZ}{\mathcal{Z}}
\newcommand{\cHBMO}{{\cH_{BMO}}}
\newcommand{\e}{\mathrm{e}}
\newcommand{\sgn}{\text{sgn}}
\newcommand{\ud}{\mathrm{d}}
\newcommand{\uds}{\mathrm{d}s}
\newcommand{\udt}{\mathrm{d}t}
\newcommand{\udws}{\mathrm{d}W_s}
\newcommand{\udw}{\mathrm{d}W}
\newcommand{\1}{\textbf{1}}
\newcommand{\indicfunc}{1}  
\newcommand{\bit}{\begin{itemize}}
\newcommand{\eit}{\end{itemize}}
\theoremstyle{plain}
\newtheorem{theo}{Theorem}[section]
\newtheorem{lemma}[theo]{Lemma}
\newtheorem{prop}[theo]{Proposition}
\newtheorem{coro}[theo]{Corollary}
\newtheorem{defi}[theo]{Definition}
\newtheorem{remark}[theo]{Remark}
\newtheorem{assump}[theo]{Assumption}
\title{Quadratic FBSDE with generalized Burgers' type nonlinearities, perturbations and large deviations}
\author{
        \normalsize  Christoph Frei \\[8pt]
        \small  University of Alberta \\
        \small  Dept.~of Math.~and Stat.~Sciences  \\
        \small  Edmonton AB T6G 2G1\\
        \small  Canada \\
        \small  \\
        \small  cfrei@ualberta.ca\\
   \and
		\normalsize Gon\c calo Dos Reis \\[8pt]
        \small  Technische Universit\"at Berlin \\
        \small  10623 Berlin, Germany \\
        \small  and \\
				\small  CMA/FCT/UNL \\
        \small  2829-516 Caparica, Portugal \\
        \small  dosreis@math.tu-berlin.de
\vspace*{0.8cm}}
\date{ \currenttime, \ddmmyyyydate\today}
\begin{document}

\selectlanguage{english}

\maketitle

\begin{abstract}
We discuss BSDE with drivers containing nonlinearities of the type $p(y)|z|$ and $p(y)|z|^2$ with $p$ a polynomial of any degree. Sufficient conditions are given for existence and uniqueness of solutions as well as comparison results. We then connect the results to the Markovian \hbox{FBSDE} setting, discussing applications in the theory of PDE perturbation and stating a result concerning a large deviations principle for the first component of the solution to the BSDE. 
\end{abstract}
{\bf 2010 AMS subject classifications:} 
Primary: 60H10; 
Secondary: 60J60
, 60F10
, 35B20
.\\
{\bf Key words and phrases:} FBSDE, quadratic growth BSDE, generalized Burgers' PDE, PDE perturbation, large deviations principle, Navier-Stokes.

\section{Introduction}

In the past 10 years there has been an explosion of publications in the field of backward stochastic differential equations (BSDE) mainly due to their significance in optimization problems in stochastic control theory. BSDE provide a full stochastic approach to control problems which are usually found in the Hamilton-Jacobi-Bellman formalism. Moreover, in a Markovian setting they provide a connection to certain classes of parabolic PDE via the generalized Feynman-Kac formula. An application of this falls into the realm of numerical analysis, where it may be better to employ a probabilistic solver for a PDE than a standard deterministic one (because of dimensionality for instance). BSDE with driver that grow quadratically in the control variable (introduced in \cite{Kobylanski2000} and referred to as gqBSDE) are of particular importance as they appear naturally in many problems from mathematical finance, for example in the context of utility optimization with exponential or power utility functions in incomplete markets. 

In this work we study an extended class of qgBSDE where we allow for cross nonlinearities in the driver function. The canonical setting for qgBSDE, apart from a bounded terminal condition, is a driver satisfying a growth condition of the type
\[
|f(t,y,z)|\leq C(1+|y|+|z|^2)\quad \text{with } C>0.
\]
Here we extend, for the first time, the framework to a growth condition of the type
\[
|f(t,y,z)|\leq C\big( 1+|y|+(1+ |y|^k)|z| +(1+|y|^k) |z|^2\big),\ k\in\IN,\, C>0,
\]
which coupled with the corresponding modulus of continuity condition allows us to obtain existence, uniqueness and comparison results. In a second step we associate with a BSDE of this type a standard forward diffusion process (these systems are called forward-backward SDE and denoted as FBSDE) and make the link to the PDE framework via a nonlinear Feynman-Kac formula.

The results we present in this work concerning this class of BSDE serve as a stepping stone for future research, for instance, under certain conditions this type of BSDE can be seen as cash sub-additive risk measures as described in Section 8 of  \cite{ElKarouiRavanelli2009}. Moreover, they also appear in problems of optimal investment and consumption when payments of taxes take place (see \cite{EnglezosFrangosKartalaEtAl2011}).

In the PDE framework this type of nonlinearities are also of importance. These improvements on existence and uniqueness results for BSDE include, as particular cases in the PDE framework, results for the Burgers' and generalized Burgers' equations. For example, \cite{CruzeiroShamarova2009,CruzeiroShamarova2010} consider a problem in fluid mechanics that links the Navier-Stokes equation for incompressible fluid flow and FBSDE with a driver of the type $f(y,z)=yz$. But they only study that specific FBSDE.

Another example is the equation $u_t=\Delta u + u|\nabla_x u|^2$ for $x\in D$, $t\geq 0$ and where $D$ is a smooth bounded domain of $\IR^n$. This PDE relates to an FBSDE with a driver of the type $f(y,z)=y|z|^2$ that clearly falls in our setting. This type of equation belongs to a class of equations used in a wide range of applications: in geometry, as a tool to build a harmonic map that is homotopy equivalent to another given map (see e.g. \cite{Struwe1996}), in the theory of ferromagnetic materials (the Landau-Lifschitz-Gilbert equation or in models of magnetostriction) and the theory of liquid crystals (see \cite{BertschDalPassovanderHout2002} and references therein).

The third message of this paper is concerned with the theory of PDE perturbation and the same concept for FBSDE. A rough intuition is as follows: given a first-order transport PDE for which one is uncertain wether it has a solution or one is unable to ascertain its regularity, then one usually adds a vanishing regularizing term (typically a vanishing Laplacian) transforming the first order PDE into a second order parabolic PDE which has better properties (or so one hopes). Then as the regularizing term vanishes one aims at proving that part of those better properties carry to the limit. 

We discuss the PDE equivalent to a perturbation technique in the FBSDE framework. Here we look at a FBSDE system where the forward diffusion equation is perturbed by a $\sqrt{\varepsilon}\sigma \udw$ term and investigate what happens to the FBSDE solution as $\varepsilon$ vanishes. The scope of the results we are able to obtain depend on the type of perturbation we work with. With suitable assumptions we are able to go indeed far and even show a large deviations principle (LDP) for the first component of the BSDE's solution as the SDE's diffusion coefficient vanishes (LDPs for SDEs are well known, see e.g. \cite{FreidlinWentzell1998}, \cite{DemboZeitouni1993} or \cite{FengKurtz2006}). In a subsequent iteration, we give two particular applications of the theory we have just developed to emphasize the results and their limitations. The relevance of this type of results in finance is related to the work of \cite{SircarSturm2011}, where the authors look at smile asymptotics derived from a model using FBSDE as risk measures. Although the authors start with a FBSDE formulation, they quickly change to a PDE formulation in order to obtain the said asymptotics. The results we propose in this work may prove useful to show such asymtpotics without the need of such PDE results.

Concerning the techniques we use to show the large deviations principle, based on the works of \cite{DossRainero2007} and \cite{Rainero2006}, they differ quite substantially from the techniques commonly used to prove such a result for diffusion processes. In the FBSDE case one is able to directly use the contraction principle while in the SDE framework, when the diffusion coefficient depends on the state process, one is not. The reason is that the SDE's solution process when interpreted in the space of continuous functions, although measurable, needs not be continuous in that space. A rigorous derivation of an LDP for SDEs can be found for example in \cite{DemboZeitouni1993} or \cite{FengKurtz2006}. In our case we are able to take advantage of the Markovian framework in which the FBSDE is immersed and its inherent PDE characterization. This lightens considerably the complexity of the proof and, under certain conditions, allows a direct use of the contraction principle.

The work is organized as follows: in Section 2 we introduce the notation we will work with and recall for completeness some results on bounded mean oscillation (BMO) martingales. Section 3 contains the main theorem concerning existence, uniqueness, comparison and estimates for the class of BSDE we are introducing. In Section 4 we apply the results to the Markov\-ian FBSDE setting, link the existence of solution of the FBSDE to the viscosity solution of the corresponding PDE and discuss differentiability of the FBSDE with relation to the initial condition of the forward diffusion. We close with Section 5 and the results on the perturbation of FBSDE and the corresponding PDEs, a large deviations results and two applications.

\section{Preliminaries}

\subsection{Spaces and Notation}

Throughout fix $T>0$. We work on a canonical Wiener space $(\Omega, \cF,  \IP)$ carrying a
$d$-dimensional Wiener process $W = (W^1,\cdots, W^d)$ restricted to
the time interval $[0,T]$, and we denote by
$\cF=(\cF_t)_{t\in[0,T]}$ its natural filtration enlarged in the
usual way by the $\IP$-zero sets. We shall need the following
operators, and auxiliary spaces of functions and stochastic
processes: let $p\geq 2, m, n, d\in \IN$, $\IQ$ a probability
measure on $(\Omega, \cF)$. We use the symbol $\IE^\IQ$ for the
expectation with respect to $\IQ$, and omit the superscript for the
canonical measure $\IP$. For vectors $x = (x^1,\cdots, x^m)$ in
Euclidean space $\IR^m$ we write $|x| = (\sum_{i=1}^m(x^i)^2)^{\frac{1}{2}}$, we also define $\1=(1,\cdots,1)$ for the $m$-dimensional vector with all entries equal to $1$. We define by $\sgn(x):\IR\to\IR$ the sign function yielding $1$ if $x\geq 0$ and $-1$ for $x<0$. Let $B_R(x_0)$ denote the ball of radius $R\geq 0$ centered around the point $x_0$. By $\indicfunc_A$ we denote the indicator function of a set $A$.  

For a map $b:\IR^m\to\IR^d$, we denote by $\nabla b$ its Jacobian matrix whenever it exists. To denote the $j$-th first derivative of the function $b(x)$ with $x\in\IR^m$ we write $\nabla_{x_j} b$.
For a function $h(x,y):\IR^m\times\IR^d\to \IR$ we write $\nabla_x h$ or $\nabla_y h$ to refer to the first derivatives with relation to $x$ and $y$ respectively. $\Delta$ denotes the canonical Laplacian operator. We will use the operator $\partial_t$ to denote the temporal partial derivative.

We also introduce the following spaces: 
\begin{itemize}
\item $C^k_b(\IR^m,\IR^n)$ the set of $k$-times differentiable real valued maps defined on $\IR^m$ mapping onto $\IR^n$ with bounded partial derivatives up to order $k$, and $C^\infty_b(\IR^m,\IR^n)=\cap_{k\geq 1} C_b^k(\IR^m,\IR^n)$; We omit the subscript $b$ to denote the same set but without the boundedness assumptions. 

\item $L^p(\IR^m; \IQ)$ the space of $\cF_T$-measurable random variables $X:\Omega\to\IR^m$, normed by $\lVert X\lVert_{L^p}=\IE^\IQ[ \, |X|^p]^{\frac{1}{p}}$; $L^\infty$ the space of bounded random variables;

\item  $\cS^p(\IR^m)$ the space of all $\cF$-adapted processes $(Y_t)_{t\in[0,T]}$ with values in
$\IR^m$ normed by $\| Y \|_{\cS^p} = \IE[\sup_{t \in [0,T]} |Y_t|^{p}]^{\frac{1}{p}}$; $\cS^\infty(\IR^m)$ the space of bounded measurable processes;

\item $\cH^p(\IR^m, \IQ)$ the space of all $\IR^m$-valued predictable processes $(Z_t)_{t\in[0,T]}$ normed by $\|Z\|_{\cH^p} = \IE^\IQ[\big( \int_0^T |Z_s|^2 \ud s \big)^{p/2} ]^{\frac{1}{p}};$

\item $BMO(\IQ)$ or $BMO_2(\IQ)$ the space of square integrable martingales $\Phi$ with $\Phi_0=0$ and satisfying
\[\lVert \Phi \lVert_{BMO(\IQ)}= \sup_{\tau}\big\|\, \IE^\IQ\big[ \langle \Phi\rangle_T - \langle \Phi \rangle_\tau| \cF_\tau \big]\big\|_{\infty}^{1/2}< \infty,\] where the supremum is taken over all stopping times $\tau\in[0,T]$.

\item $\cH_{BMO}$ the space of $\IR^m$-valued $\cH^p$-integrable processes $(Z_t)_{t\in[0,T]}$ for all $p\geq 2$ such that $\int_0^\cdot Z_s\udws\in BMO$. We define\footnote{If $Z*W\in BMO$ then one has automatically that $Z\in \cH^p$ for all $p\geq 2$. This is a consequence of the BMO spaces, for more see the BMO results subsection below.} $\|Z\|_{\cHBMO}=\|\int Z\udw\|_{BMO}$. 
\end{itemize}
If there is no ambiguity about the underlying spaces or measures, we
also omit them as arguments in the function spaces defined above.

Constants appearing in inequalities of our proofs will for
simplicity be denoted by $C$, although they may change from line to
line.

\subsection{BMO processes and their properties}

The BMO space is an interesting space of stochastic processes satisfying
\[
L^\infty \subsetneq \ BMO \ \subsetneq \cap_{p\geq 1} L^p.
\]
For more details on BMO spaces we refer the reader to \cite{Kazamaki1994}.

In the following lemma we state some properties of BMO martingales we will frequently use.
\begin{lemma}[Properties of BMO martingales]
\label{lemma:bmoproperties}$\phantom{121}$
\begin{enumerate}[1)]
\item Given a BMO martingale $M$ with quadratic variation $\langle M\rangle$, its stochastic exponential
$\cE(M)_T := \exp\{M_T-\frac{1}{2}\langle M\rangle_T\}$
has integral $1$, and thus the measure defined by $\ud\IQ = \cE(M)_T \ud\IP$
is a probability measure.

\item For every  BMO martingale $M$, there exists $p^*>1$ such that\footnote{The number $p^*$ can be found through the function  $\Psi(x)=\big\{1+\frac{1}{x^2}\log\frac{2x-1}{2(x-1)}\big\}^{{1}/{2}}-1$ defined for all $1<x<\infty$ and verifying $\lim_{x\to1^+} \Psi(x)=\infty$ and $\lim_{x\to\infty} \Psi(x)=0$. In other words, if $\lVert M\lVert_{BMO_2}<\Psi({p^*})$, then $\cE(M) \in L^{p^*}$, see Theorem 3.1 \cite{Kazamaki1994}.} $\cE(M) \in L^{p^*}$. Moreover, there exists a constant $C_{p^*}$ depending only on $p^*$ and the BMO norm of $M$ such that for any stopping time $\tau\in[0,T]$ it holds that
\begin{align}
\label{eq:reverseholderineq:trick}
\IE[\, \cE(M)_T^{p^*} \,|\cF_t]\leq C_{p^*}\big(\cE(M)_t\big)^{p^*}
\end{align}

\item If $\|M\|_{BMO_2}<1$, then\footnote{This result is known as the John-Nirenberg inequality, see Theorem 2.2 in \cite{Kazamaki1994}.} for every stopping time $\tau\in[0,T]$ 
\begin{align}
\label{eq:johnnirenberg:ineq}
\IE\big[\,\exp\{\langle M\rangle_T-\langle M\rangle_\tau  \}\,|\cF_\tau\big]< \dfrac1{1-\|M\|^2_{BMO_2}}. 
\end{align}
In particular, if $Z*W\in BMO$, then for every $p\geq 1$ it holds that 
\begin{align}
\nonumber
&\IE\Big[\,\Big(\int_0^T |Z_s|^2\uds\Big)^p\,\Big]\leq p!\, \|Z*W\|_{BMO}^{2p}\qquad  \Rightarrow\quad Z\in\cH^{2p}.
\end{align}
Moreover, for any $p\geq 1$ and any\footnote{ 
This inequality follows from \eqref{eq:johnnirenberg:ineq}, since for every $\varepsilon\in(0,2)$ and $\delta> 0$ there exists a constant $C_{\delta,\varepsilon}$ such that  $|z|^\varepsilon\leq C_{\delta,\varepsilon}+\delta|z|^2$ for all $z$. } $\varepsilon\in(0,2)$
\begin{align}
\label{eq:BMOproperties}
& 
\IE\Big[\, \exp\Big\{\, p \int_0^T |Z_s|^{\varepsilon} \uds \,\Big\}\,\Big] \leq \widehat{C} < \infty,
\end{align}
where $\widehat{C}$ depends on $p$, $\varepsilon$ and $\|Z*W\|^2_{BMO_2}$.
\end{enumerate}
\end{lemma}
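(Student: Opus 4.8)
All three items are classical and may be found in \cite{Kazamaki1994}; only item 3) calls for a short self-contained argument, while 1) and 2) I would quote from there. The natural order is 3), then 2), then 1), since the John--Nirenberg estimate underpins the reverse Hölder inequality, which in turn yields the martingale property of $\cE(M)$.

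\textbf{Item 3) and its consequences.} Put $a:=\|M\|_{BMO_2}^2$, so that $\IE[\langle M\rangle_T-\langle M\rangle_\tau\mid\cF_\tau]\le a$ for every stopping time $\tau$. The heart of the matter is the iterated moment bound
\[
\IE\big[(\langle M\rangle_T-\langle M\rangle_\tau)^n\,\big|\,\cF_\tau\big]\le n!\,a^{n},\qquad n\in\IN,
\]
which I would prove by induction on $n$: starting from $(\langle M\rangle_T-\langle M\rangle_\tau)^n=n\int_\tau^T(\langle M\rangle_T-\langle M\rangle_s)^{n-1}\,\ud\langle M\rangle_s$, condition the integrand on $\cF_s$, bound it by $(n-1)!\,a^{n-1}$ using the induction hypothesis (with $s$ in the role of $\tau$), take this deterministic factor out, and apply the defining BMO inequality once more to the remaining $\IE[\langle M\rangle_T-\langle M\rangle_\tau\mid\cF_\tau]\le a$. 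Dividing by $n!$ and summing over $n$ yields $\IE[\exp\{\langle M\rangle_T-\langle M\rangle_\tau\}\mid\cF_\tau]\le\sum_{n\ge0}a^{n}=(1-a)^{-1}$ when $a<1$, i.e.\ \eqref{eq:johnnirenberg:ineq}. Taking $\tau=0$ and $M=Z*W$, so that $\langle M\rangle_T=\int_0^T|Z_s|^2\uds$, the moment bound is exactly the displayed $\cH^{2p}$-estimate for integer $p$, and for general $p\ge1$ one passes to $\lceil p\rceil$ by Jensen; either way $Z\in\cH^{2p}$. For \eqref{eq:BMOproperties} I would invoke the elementary inequality $|z|^{\varepsilon}\le C_{\delta,\varepsilon}+\delta|z|^2$ to obtain $p\int_0^T|Z_s|^{\varepsilon}\uds\le pTC_{\delta,\varepsilon}+p\delta\langle Z*W\rangle_T$, pick $\delta>0$ so small that $p\delta\,\|Z*W\|_{BMO_2}^2<1$, apply \eqref{eq:johnnirenberg:ineq} at $\tau=0$ to the rescaled martingale $\sqrt{p\delta}\,(Z*W)$, and absorb the deterministic factor $\e^{pTC_{\delta,\varepsilon}}$ into $\widehat C$.

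\textbf{Items 2) and 1).} For the reverse Hölder inequality \eqref{eq:reverseholderineq:trick} the idea is to write $\cE(M)_T^{p^*}$ as the product of the stochastic exponential of a rescaled martingale---a nonnegative local martingale, hence of expectation $\le1$---and an exponential of $\langle M\rangle_T$ with an explicit coefficient, and then to combine the supermartingale bound for the first factor with the John--Nirenberg estimate \eqref{eq:johnnirenberg:ineq} for the second; a careful choice of the interpolation parameters produces the threshold function $\Psi$ quoted in the footnote, and since $\Psi$ takes every value in $(0,\infty)$ one obtains, for any $M\in BMO$, some $p^*>1$ with $\|M\|_{BMO_2}<\Psi(p^*)$. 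Repeating the argument after an arbitrary stopping time (legitimate since $M-M^\tau\in BMO$ with norm at most $\|M\|_{BMO_2}$, and $\cE(M)_T/\cE(M)_\tau=\cE(M-M^\tau)_T$) gives the conditional form \eqref{eq:reverseholderineq:trick}. Item 1) then follows by applying \eqref{eq:reverseholderineq:trick} to the stopped martingales $M^\tau$ at the initial time: this bounds $\{\cE(M)_\tau\}_{\tau}$ in $L^{p^*}$ with $p^*>1$, hence the nonnegative local martingale $\cE(M)$ is of class (D) and therefore a true uniformly integrable martingale, so $\IE[\cE(M)_T]=\cE(M)_0=1$ and $\ud\IQ=\cE(M)_T\,\ud\IP$ is a probability measure. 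For the details of \eqref{eq:reverseholderineq:trick} I would simply refer to Theorem~3.1 of \cite{Kazamaki1994}.

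The only genuinely delicate point is the reverse Hölder inequality of item 2)---the interpolation and the pinpointing of the sharp constant $\Psi$---which I would take from \cite{Kazamaki1994}; item 3) is the elementary induction above (and is, in effect, the martingale John--Nirenberg inequality), and item 1) is then routine uniform-integrability bookkeeping once \eqref{eq:reverseholderineq:trick} is in hand.
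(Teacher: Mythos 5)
Your proposal is correct and consistent with the paper, which states this lemma without proof and simply points to Theorems 2.2 and 3.1 of \cite{Kazamaki1994}; your induction for the moment bound in item 3), the choice of $\delta$ for \eqref{eq:BMOproperties}, and the class-(D) argument deducing item 1) from item 2) are all the standard arguments. The only nitpick is that for non-integer $p$ your Jensen step yields the constant $(\lceil p\rceil!)^{p/\lceil p\rceil}$ rather than $\Gamma(p+1)$, which is slightly larger but still finite, so the conclusion $Z\in\cH^{2p}$ is unaffected.
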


\section{Existence, uniqueness and comparison}

We now introduce the class of BSDE we will work with. The main novelty is a setting that allows for nonlinearities of the type $y^kz$ or $y^k|z|^2$ for some positive power $k\geq 1$.

\subsection{Assumptions}

Take a BSDE satisfying the following dynamics
\begin{align}
\label{general-BSDE}
Y_t &= \xi +\int_t^T f(s,Y_s,Z_s)\uds -\int_t^T Z_s \udws.
\end{align}
We refer to this stochastic equation as BSDE$(\xi,f)$. We also introduce the set of assumption under which we will be working.
\begin{assump}
\label{assump:H0}
$\xi$ is an $\cF_T$-measurable uniformly bounded random variable, i.e. for some $M>0$ we have $\| \xi \|_{L^\infty}\leq M$;
\end{assump}
\begin{assump}
\label{assump:H1}
$f:\Omega\times [0,T]\times \IR\times \IR^d\to \IR$ is an $\cF$-predictable continuous function. There exist $k\in\IN$ and positive constants $K$, $\delta$ such that for all $(\omega,t,y,z)\in \Omega \times [0,T] \times \IR \times \IR^d$ 
\begin{align}
\label{H2growth}
&|f(\omega,t,y,z)|\leq K\big( 1+|y|+(1+ |y|^k)|z| +(1+\gamma |y|^k) |z|^2\big).
\end{align}
\end{assump}
\begin{assump}
\label{assump:H2} There exist $k\in\IN$ and a positive constant $K$ such that for all $t\in[0,T]$, $y,y'\in\IR$ and $z,z'\in\IR^d$ it holds $\IP$-a.s.\,that
\begin{align}
\label{H2lip}
&|f(\cdot,t,y,z)-f(\cdot,t,y',z')| \\ 
\nonumber
&\quad \quad
\leq  K \big\{ 
\big(1+(1+\gamma|z|+\gamma |z'|)(|y|^{k-1}+|y'|^{k-1})(|z|+|z'|)\big)\big|y-y'\big|
\\
\nonumber
&
\hspace{2cm} +\big(1+|y|^k+|y'|^k+(1+\gamma|y|^k+\gamma|y'|^k)(|z|+|z'|)\big)\big|z-z'\big| \big\}.
\end{align}
\end{assump}
\begin{remark} Drivers like $f(y,z)=\nu y z$ or $f(y,z)= y|z|^2$ can be found in applications in physics (the first relates to Burger's PDE, see the introduction for remarks on the second). Drivers like $f(t,y,z)=\theta_t z+ \gamma|z|^2$ or $f(y,z)=-(ay^+-by^-)|z|^2$ with $0<a<b$ are found in applications in finance (the last one relates to cash subadditive risk measures, see e.g. \cite{ElKarouiRavanelli2009}). 
\end{remark}

\begin{remark}
In the above assumptions we write a domination in terms of a power $|y|^k$ but since $|y|^q\leq C_{p,q}+|y|^p$ for any $q\leq p$ and some constant $C_{p,q}$ it is clear that this includes any polynomial dependence of $y$ up to power~$k$.
\end{remark}

\begin{remark}
In \cite{Tevzadze2008}, the main theorem holds under his assumption (B) that imposes ``$|\nabla_y f|\leq \text{Const.}$'', a condition which does not allow for $yz$ type non-linearities. However, Proposition 1 of \cite{Tevzadze2008} partially covers our setting by allowing $y^2$ terms if an extra smallness assumption of the involved data is taken, namely that $\|\xi\|_{L^\infty}$ and $\|f(\cdot,0,0)\|_{L^\infty}$ are very small.
\end{remark}
For completeness we quote Theorem 2.3 from \cite{Kobylanski2000}. This result plays a crucial role in  proving that a BSDE under Assumption \ref{assump:H1} has a solution.  
\begin{prop}[Theorem 2.3 of \cite{Kobylanski2000}]
\label{theo:2.3Koby2000}
Let Assumption \ref{assump:H0} hold and assume a continuous $\cF$-predictable function $f:\Omega\times [0,T]\times \IR\times \IR^d\to \IR$ satisfies for any $(\omega,t)\in\Omega\times [0,T]$
\begin{align}
\label{eq:koby2000:exist:prop:condition}
|f(\omega,t,y,z)|\leq A+B|y|+F |z|^2,\quad \text{with } A,B,F\in [0,+\infty).
\end{align}
Then the BSDE($\xi$,$f$) \eqref{general-BSDE} has a solution $(Y,Z)\in \cS^\infty\times \cH^2$. The process $Y$ has continuous paths.

Moreover, there exists a unique minimal solution $(Y_*,Z_*)$  (respectively a unique maximal solution $(Y^*,Z^*)$) in the sense that the solution $(U,V)$ of BSDE$(\eta,h)$ where $f\leq h$ and $\xi\leq \eta$ (respectively $f \geq h$ and $\xi \geq \eta$) satisfies 
$Y_*\leq U$ (respectively $Y^*\geq U$).
\end{prop}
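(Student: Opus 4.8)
Since this is Theorem~2.3 of \cite{Kobylanski2000}, I would reconstruct it by the approximation-and-stability method. The plan is first to replace $f$ by a sequence of globally Lipschitz, $\cF$-predictable drivers $(f_n)_{n\geq1}$ with $f_n\uparrow f$ locally uniformly and with \eqref{eq:koby2000:exist:prop:condition} holding uniformly in $n$ (such monotone Lipschitz approximations are standard; see \cite{Kobylanski2000}). For each fixed $n$, BSDE$(\xi,f_n)$ has a bounded terminal condition and a Lipschitz driver, hence a unique solution $(Y^n,Z^n)\in\cS^2(\IR)\times\cH^2(\IR^d)$ with $Y^n$ continuous, and the comparison theorem for Lipschitz BSDE applies within the family.

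Next I would derive a priori estimates uniform in $n$. Applying It\^o's formula to $\psi(Y^n)$ for a convex $\psi$ of exponential type -- the classical change of variables that controls the $F|z|^2$ term -- and using \eqref{eq:koby2000:exist:prop:condition} yields $\|Y^n\|_{\cS^\infty}\leq R$ with $R=R(A,B,F,M,T)$ independent of $n$ (this also retroactively justifies any truncation of $f$ in $y$ used to build the $f_n$, since every solution is bounded by $R$). Feeding this back and applying It\^o to $\e^{\lam Y^n}$ with $\lam$ large gives, for every stopping time $\tau$, a uniform bound on $\IE\big[\int_\tau^T|Z^n_s|^2\uds\,\big|\,\cF_\tau\big]$, i.e.\ $\sup_n\|Z^n\ast W\|_{BMO}<\infty$; Lemma~\ref{lemma:bmoproperties} then makes $(Z^n)$ bounded in every $\cH^p$.

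The crucial and hardest step is the passage to the limit. By monotonicity $Y^n$ increases to some $Y$ with $\|Y\|_{\cS^\infty}\leq R$, and, once $Y$ is seen to be continuous, a localisation together with Dini's theorem upgrades this to $\IP$-a.s.\ uniform convergence on $[0,T]$. The genuine obstacle is the strong convergence of the controls: as $f$ is merely continuous with quadratic growth in $z$, a weak $\cH^2$ limit of $(Z^n)$ does not suffice to pass to the limit in $\int_t^T f_n(s,Y^n_s,Z^n_s)\uds$. The remedy is Kobylanski's monotone stability argument: applying It\^o to $\phi(Y^n-Y^m)$ with $\phi\geq0$, $\phi(0)=0$, $\phi$ convex and of exponential growth chosen so that $\phi''$ dominates $\phi'$ times the quadratic coefficient, one absorbs the offending $|z|^2$ terms and bounds $\|Z^n-Z^m\|_{\cH^2}^2$ by $\|Y^n-Y^m\|_{\cS^\infty}$ plus a term vanishing as $n,m\to\infty$ by the local uniform convergence of $f_n$; hence $(Z^n)$ is Cauchy in $\cH^2$. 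Therefore $Z^n\to Z$ in $\cH^2$ and, along a subsequence, $\uds\otimes\ud\IP$-a.e., so that dominated convergence -- legitimate thanks to the uniform $BMO$ and $\cH^p$ bounds -- permits passage to the limit in the driver. Thus $(Y,Z)\in\cS^\infty(\IR)\times\cH^2(\IR^d)$ solves BSDE$(\xi,f)$, with $Y$ continuous.

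Finally, for the minimal solution I would rerun the scheme with an increasing Lipschitz approximation $f_n\uparrow f$ satisfying $f_n\leq f$: if $(U,V)$ solves BSDE$(\eta,h)$ with $f\leq h$ and $\xi\leq\eta$, then $f_n\leq h$, $\xi\leq\eta$ and comparison for the Lipschitz BSDE$(\xi,f_n)$ gives $Y^n\leq U$, whence $Y_\ast=\lim_nY^n\leq U$; any two minimal solutions dominate one another, so $Y_\ast$ is unique, and then so is $Z_\ast$ by identifying martingale parts. The maximal solution $(Y^\ast,Z^\ast)$ is obtained symmetrically from a decreasing approximation $f_n\downarrow f$, $f_n\geq f$. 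The whole difficulty is concentrated in the $\cH^2$-convergence of the controls in the stability step; everything else reduces to estimates of the kind recorded in Lemma~\ref{lemma:bmoproperties}.
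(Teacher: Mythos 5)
The paper offers no proof of this proposition: it is quoted verbatim as Theorem~2.3 of \cite{Kobylanski2000}, so the only "paper proof" to compare against is Kobylanski's original one. Your sketch faithfully reconstructs exactly that argument---monotone Lipschitz (inf-convolution) approximation of the driver, uniform $\cS^\infty$ and BMO bounds via exponential transforms, the monotone stability step yielding strong $\cH^2$-convergence of the controls, and comparison within the approximating family to produce the minimal and maximal solutions---so it is correct and takes essentially the same route as the cited source.
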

As is expected from the theory of quadratic BSDE (see \cite{Morlais2009}, \cite{ImkellerDosReis2010} or \cite{dosreis2011}), we can conclude that\footnote{Here we cannot refer to Lemma 3.1 of \cite{Morlais2009} with her Assumption H1 instead of just \eqref{eq:koby2000:exist:prop:condition}. Her extra restriction $\gamma \geq b$ restricts the methods we use in this work. Please compare her assumption H1 with our  Assumption \ref{assump:H1}.} $Z*W\in BMO$.  
\begin{lemma}
\label{lemma:ZisBMO}
Under the conditions of Proposition \ref{theo:2.3Koby2000}, we have $Z\in\cHBMO$.
\end{lemma}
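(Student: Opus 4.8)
The plan is the classical BMO a priori estimate for quadratic BSDE, which is now available precisely because Proposition~\ref{theo:2.3Koby2000} already guarantees $R:=\|Y\|_{\cS^\infty}<\infty$. Fix $\beta:=2F+1>0$ and set $\psi(y):=e^{\beta(y+R)}$; this is a smooth, strictly convex function with $\psi,\psi',\psi''>0$ which, on the range $[-R,R]$ of $Y$, satisfies $1\le\psi\le e^{2\beta R}$, $0<\psi'\le\beta e^{2\beta R}$, and, crucially,
\[
\tfrac12\psi''(y)-F\psi'(y)=\beta\Big(\tfrac{\beta}{2}-F\Big)\psi(y)=\tfrac{\beta}{2}\,\psi(y)\ \ge\ \tfrac{\beta}{2}>0,\qquad y\in[-R,R].
\]

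First I would apply It\^o's formula to $\psi(Y_t)$ on a stochastic interval $[\tau,T]$, for an arbitrary stopping time $\tau\in[0,T]$, using the dynamics \eqref{general-BSDE}. Since $\psi'(Y)$ is bounded and $Z\in\cH^2$, the local martingale $\int_0^\cdot\psi'(Y_s)Z_s\,\ud W_s$ is a true martingale, so after taking $\IE[\,\cdot\,|\cF_\tau]$ and rearranging one gets
\[
\IE\Big[\tfrac12\!\int_\tau^T\!\psi''(Y_s)|Z_s|^2\uds\,\Big|\,\cF_\tau\Big]
=\IE[\psi(\xi)\,|\,\cF_\tau]-\psi(Y_\tau)+\IE\Big[\int_\tau^T\!\psi'(Y_s)f(s,Y_s,Z_s)\uds\,\Big|\,\cF_\tau\Big]
\]
(one may localize by $\sigma_n:=\inf\{t:\int_0^t|Z_s|^2\uds\ge n\}\wedge T$ and let $n\to\infty$ if one prefers to avoid invoking the martingale property directly). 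Next, bound $\psi'(Y_s)f(s,Y_s,Z_s)\le\psi'(Y_s)(A+BR+F|Z_s|^2)$ using $\psi'\ge0$, the growth bound \eqref{eq:koby2000:exist:prop:condition} and $|Y_s|\le R$, move the $F|Z_s|^2$ term to the left-hand side, and use the displayed lower bound together with $\psi(Y_\tau)\ge0$ and the uniform bounds on $\psi,\psi'$ to obtain, for every stopping time $\tau$,
\[
\tfrac{\beta}{2}\,\IE\Big[\int_\tau^T|Z_s|^2\uds\,\Big|\,\cF_\tau\Big]\ \le\ e^{2\beta R}+(A+BR)\,\beta\,e^{2\beta R}\,T\ =:\ C_0 ,
\]
with $C_0$ depending only on $\|Y\|_{\cS^\infty}$, $A$, $B$, $F$ and $T$.

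Since $\langle Z*W\rangle_T-\langle Z*W\rangle_\tau=\int_\tau^T|Z_s|^2\uds$, this says precisely that $\|Z*W\|_{BMO}^2\le 2C_0/\beta<\infty$, i.e.\ $\int_0^\cdot Z_s\,\ud W_s\in BMO$. The remaining requirement in the definition of $\cHBMO$, namely that $Z\in\cH^p$ for every $p\ge2$, then follows automatically from part~3) of Lemma~\ref{lemma:bmoproperties}; hence $Z\in\cHBMO$. There is no genuine obstacle here: the only step requiring a word of care is the justification that the $\ud W$-integral contributes nothing to the conditional expectation (handled either by the boundedness of $\psi'(Y)$ together with $Z\in\cH^2$, or by the localization above), while everything else is the choice of the weight $\psi$ and routine bookkeeping.
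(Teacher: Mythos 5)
Your proof is correct and follows essentially the same route as the paper: apply It\^o's formula to an exponential of $Y$ (the paper uses $\exp\{\alpha Y_t\}$ with $\alpha>2F$, you use $e^{\beta(Y_t+R)}$ with $\beta=2F+1$, which differs only by a normalizing shift) on $[\tau,T]$, use the growth bound \eqref{eq:koby2000:exist:prop:condition} and conditional expectation to absorb the quadratic term, and deduce a uniform bound on $\IE[\int_\tau^T|Z_s|^2\uds\,|\,\cF_\tau]$ over all stopping times. Your additional remarks (justifying the martingale property of the stochastic integral and invoking Lemma \ref{lemma:bmoproperties} part 3 for the $\cH^p$-integrability) merely fill in details the paper leaves implicit.
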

\begin{proof}
We only sketch the proof since this kind of argument is known. Take $t\in[0,T]$. Assume that $\|Y\|_{\cS^\infty}$ and $\|\xi\|_{L^\infty}$ are bounded by the same constant $M$ and take a constant $\alpha$ satisfying $\alpha>2F$. Let $\tau\in [0,T]$ be a stopping time.  Applying It\^o's formula to the process $\widehat Y_t=\exp\{ \alpha Y_t \}$ between $[\tau,T]$, \eqref{eq:koby2000:exist:prop:condition} and the conditional expectation on $\cF_\tau$, we obtain:
\begin{align*}
& \widehat Y_\tau \leq \IE\Big[e^{ \alpha \xi  }
+\int_\tau^T \widehat Y_s\big[
\alpha A +\alpha B |Y_s|+(\alpha F-\frac{\alpha^2}{2})|Z_s|^2  \big]\uds\,\Big| \cF_\tau\Big]\\
\nonumber
& \qquad \Leftrightarrow 
(\frac{\alpha^2}{2}-\alpha F) e^{-\alpha M} \IE\Big[ \int_\tau^T |Z_s|^2\uds\,\Big|\cF_\tau\Big] \leq 
e^{\alpha M}\big(1 + (\alpha A +\alpha B M )T\big).
\end{align*}
Since $\alpha> 2F$ we have that $2\alpha(\alpha-2F)>0$ and hence we easily get from the definition of the BMO-norm that $Z*W\in BMO$. Moreover, from the calculations we just did, the BMO norm of $Z*W$ is bounded from above by a universal constant depending only on $\|\xi\|_{L^\infty}$, $\|Y\|_{\cS^\infty}$ and the constants $A$, $B$ and $F$.

We remark as well that as $F$ decreases the upper bound for the $\cHBMO$-norm of $Z$ also decreases and vice-versa.
\end{proof}

\subsection{The main results - Abstract BSDE setting}
We now state the main results of this section. We start with an existence and uniqueness result.
\begin{theo}[Existence]
\label{maintheo:ch3:existence}
Let Assumptions \ref{assump:H0} and \ref{assump:H1} hold. Then the BSDE \eqref{general-BSDE} has a solution $(Y,Z)$ in $\cS^\infty \times \cHBMO$. There exists an upper bound for $\|Y\|_{\cS^\infty}$ independent of $\gamma$.

Moreover, there exists a unique maximal solution $(Y,Z)$ in the sense that $\widehat Y\leq Y$ for any other possible solution $(\widehat Y,\widehat Z)$.
\end{theo}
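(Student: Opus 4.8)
The plan is to obtain the maximal solution by truncating the driver so that Kobylanski's Proposition~\ref{theo:2.3Koby2000} applies, the truncation being harmless once an a priori $\cS^\infty$-bound on the truncated solutions that is uniform in the truncation level and --- the crux --- in $\gamma$ has been established. For $n\in\IN$ I would set $f_n(\omega,t,y,z):=f\big(\omega,t,(-n)\vee y\wedge n,z\big)$, which is again $\cF$-predictable and continuous and, by \eqref{H2growth} together with $|(-n)\vee y\wedge n|\le\min\{|y|,n\}$, satisfies $|f_n(\omega,t,y,z)|\le K(1+n)+K(1+n^k)|z|+K(1+\gamma n^k)|z|^2\le A_n+F_n|z|^2$ for suitable constants $A_n,F_n\in[0,\infty)$; Proposition~\ref{theo:2.3Koby2000} then supplies a maximal solution $(Y^n,Z^n)\in\cS^\infty\times\cH^2$ of BSDE$(\xi,f_n)$ with continuous $Y^n$, and Lemma~\ref{lemma:ZisBMO} gives $Z^n*W\in BMO$. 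I would also record that $|f_n(\omega,t,y,0)|\le K(1+|y|)$, with a constant involving neither $n$ nor $\gamma$.

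The heart of the proof is the a priori bound $\|Y^n\|_{\cS^\infty}\le R_0$ with $R_0$ independent of $n$ and of $\gamma$. Writing $R_n:=\|Y^n\|_{\cS^\infty}<\infty$, the idea is to linearise not the driver itself (which would need a modulus-of-continuity assumption on $f$) but the function dominating it in \eqref{H2growth}. I would introduce the predictable process
\[
\beta^n_s:=\big(K(1+|Y^n_s|^k)+K(1+\gamma|Y^n_s|^k)|Z^n_s|\big)\,\frac{Z^n_s}{|Z^n_s|}\,\1_{\{Z^n_s\neq0\}},
\]
so that $\langle Z^n_s,\beta^n_s\rangle$ is precisely the $z$-dependent part of the right-hand side of \eqref{H2growth} evaluated at $(Y^n_s,Z^n_s)$, while $|\beta^n_s|\le K(1+R_n^k)+K(1+\gamma R_n^k)|Z^n_s|$. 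Since $Z^n*W\in BMO$ and $R_n<\infty$, this forces $\beta^n*W\in BMO$; by Lemma~\ref{lemma:bmoproperties}(1) the measure $\ud\IQ^n:=\cE(\beta^n*W)_T\,\ud\IP$ is then a probability measure equivalent to $\IP$, $W^{\IQ^n}:=W-\int_0^\cdot\beta^n_s\,\uds$ is a $\IQ^n$-Brownian motion, and $\int Z^n\,\ud W^{\IQ^n}$ remains a $BMO(\IQ^n)$-martingale. Plugging the estimate $|f_n(s,Y^n_s,Z^n_s)|\le K(1+|Y^n_s|)+\langle Z^n_s,\beta^n_s\rangle$ (which follows from \eqref{H2growth}) into \eqref{general-BSDE}, taking conditional expectations under $\IQ^n$ for the upper bound on $Y^n$ and under the analogous measure built from $-\beta^n$ for the lower bound, and using $\|\xi\|_{L^\infty}\le M$, I would obtain
\[
|Y^n_t|\le M+KT+K\int_t^T\|Y^n_s\|_{L^\infty}\,\uds,\qquad t\in[0,T],
\]
and Gronwall's lemma would give $\|Y^n\|_{\cS^\infty}\le (M+KT)\,\e^{KT}=:R_0$.

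Choosing $n\ge R_0$ the truncation becomes inactive along the solution, so $(Y^n,Z^n)$ solves BSDE$(\xi,f)$ with $\|Y^n\|_{\cS^\infty}\le R_0$ and $Z^n\in\cHBMO$ (Lemma~\ref{lemma:ZisBMO}), which proves existence and the $\gamma$-independent bound. For maximality, let $(\widehat Y,\widehat Z)\in\cS^\infty\times\cHBMO$ be any solution of BSDE$(\xi,f)$. Running the above change-of-measure estimate verbatim for $(\widehat Y,\widehat Z)$ and $f$ --- legitimate since $\widehat Y$ is bounded and $\widehat Z*W\in BMO$ --- gives $\|\widehat Y\|_{\cS^\infty}\le R_0$, hence for $n\ge R_0$ the pair $(\widehat Y,\widehat Z)$ also solves BSDE$(\xi,f_n)$, and the maximality clause of Proposition~\ref{theo:2.3Koby2000} (with $h=f_n$, $\eta=\xi$) yields $\widehat Y\le Y^n$. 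Applying this comparison with the roles of any two indices $n,m\ge R_0$ interchanged shows that all these $Y^n$ coincide with a single process $Y$, whence $\widehat Y\le Y$ for every solution. Uniqueness is then immediate: if $(Y,Z)$ and $(Y',Z')$ are both maximal then $Y\le Y'\le Y$, and $Z=Z'$ follows because in \eqref{general-BSDE} the difference of the stochastic integral terms is a continuous martingale of finite variation, hence null.

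I expect the only genuinely delicate step to be the uniform a priori bound. A direct Kobylanski-type exponential estimate would give a bound of size $\gamma^{-1}\e^{\gamma M}$ and, worse, would reintroduce the unknown $\cS^\infty$-norm of $Y^n$ inside the quadratic coefficient $K(1+\gamma|Y^n|^k)$, destroying uniformity in $n$ and thereby the removability of the truncation. The way around this is to notice that the only non-removable $y$-dependence in the growth bound \eqref{H2growth} is the linear term $K(1+|y|)$, while the $|z|$- and $|z|^2$-contributions assemble into $\langle z,\beta^n\rangle$ with $\beta^n*W\in BMO$ and can be absorbed by a Girsanov transform, after which one is left with an essentially linear equation whose bound involves only $M$, $K$, $T$. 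The remaining technical care --- checking $\beta^n*W\in BMO$ and the invariance of the $BMO$ property of $Z^n*W$ under $\IQ^n$ --- is exactly what Lemmas~\ref{lemma:bmoproperties} and~\ref{lemma:ZisBMO} are tailored to supply.
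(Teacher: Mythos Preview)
Your proof is correct and takes essentially the same approach as the paper: truncate the driver in $y$, apply Proposition~\ref{theo:2.3Koby2000}, obtain the $\gamma$-free $\cS^\infty$-bound by absorbing the $z$-dependent part of the growth condition \eqref{H2growth} into a Girsanov change of measure, and observe that the truncation becomes inactive for large $n$. The only cosmetic differences are that the paper uses a smooth truncation $h_n$ and handles the residual $K(1+|y|)$ term via an integrating factor (It\^o applied to $e^{\int_0^t K\,\sgn(Y^n_s)\,\uds}Y^n_t$) rather than your Gronwall step; your maximality argument --- showing that any $\cS^\infty\times\cHBMO$-solution of the original BSDE obeys the same bound $R_0$ and hence also solves the truncated BSDE, so that Proposition~\ref{theo:2.3Koby2000} compares it to $Y^n$ --- is in fact spelled out more carefully than in the paper's terse paragraph.
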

\begin{theo}[Uniqueness]
\label{maintheo:ch3:uniqueness}
Let Assumptions \ref{assump:H0}, \ref{assump:H1} and \ref{assump:H2} hold and take $(Y,Z)$ to be the solution of BSDE \eqref{general-BSDE}. Let $\|Y\|_{\cS^\infty}\leq R$,  $\|Z\|_{\cHBMO}\leq R^{\cZ}_\gamma$ for $R,R^\cZ_\gamma\geq 0$, where $R$ is independent of $\gamma$ and $R^{\cZ}_\gamma$ is not. Define the number\footnote{The function $\Psi$ is defined in part 2) of Lemma \ref{lemma:bmoproperties}.}
$p^*=\Psi^{-1}\big(2K(1+2R^k)\sqrt{T}+4K(1+2\gamma R^k)R^\cZ_\gamma\big)$ with $q^*$ its H\"older conjugate. Assume further that $\gamma$ satisfies $(2q^*K8\gamma R^{k-1})^{\frac12}R^\cZ_\gamma<1$.

Then BSDE \eqref{general-BSDE} has a unique solution in $\cS^\infty\times \cHBMO$. 
\end{theo}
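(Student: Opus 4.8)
The plan is to run the classical linearization-and-Girsanov uniqueness scheme for quadratic BSDE; the only genuinely new point is the control, under a Girsanov-changed measure, of the quadratic-in-$Z$ growth of the linearized $y$-coefficient created by the generalized Burgers' term $\gamma|y|^k|z|^2$. So let $(Y^1,Z^1)$ and $(Y^2,Z^2)$ be two solutions in $\cS^\infty\times\cHBMO$; by Theorem~\ref{maintheo:ch3:existence} and Lemma~\ref{lemma:ZisBMO} we may take both to satisfy $\|Y^i\|_{\cS^\infty}\le R$ and $\|Z^i\|_{\cHBMO}\le R^\cZ_\gamma$. Set $\delta Y=Y^1-Y^2$ and $\delta Z=Z^1-Z^2$. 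Splitting the driver difference as $f(s,Y^1_s,Z^1_s)-f(s,Y^2_s,Z^1_s)$ plus $f(s,Y^2_s,Z^1_s)-f(s,Y^2_s,Z^2_s)$ and invoking Assumption~\ref{assump:H2}, one produces predictable processes $a$ (scalar) and $b$ ($\IR^d$-valued) with $f(s,Y^1_s,Z^1_s)-f(s,Y^2_s,Z^2_s)=a_s\,\delta Y_s+b_s\cdot\delta Z_s$ and
\[
|a_s|\le K\big(1+(1+\gamma|Z^1_s|+\gamma|Z^2_s|)(|Y^1_s|^{k-1}+|Y^2_s|^{k-1})(|Z^1_s|+|Z^2_s|)\big),
\]
\[
|b_s|\le K\big(1+|Y^1_s|^k+|Y^2_s|^k+(1+\gamma|Y^1_s|^k+\gamma|Y^2_s|^k)(|Z^1_s|+|Z^2_s|)\big),
\]
so $\delta Y$ solves the linear BSDE $\delta Y_t=\int_t^T(a_s\delta Y_s+b_s\cdot\delta Z_s)\,\uds-\int_t^T\delta Z_s\,\udws$ with $\delta Y_T=0$.

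Next I would deal with the $z$-coefficient. Using $\|Y^i\|_{\cS^\infty}\le R$, $\|Z^i\|_{\cHBMO}\le R^\cZ_\gamma$ and $\IE[\int_\tau^T|Z^i_s|^2\,\uds\,|\,\cF_\tau]\le(R^\cZ_\gamma)^2$, one checks $b*W\in BMO$ with $\|b*W\|_{BMO}\le 2K(1+2R^k)\sqrt T+4K(1+2\gamma R^k)R^\cZ_\gamma$, i.e.\ $\|b*W\|_{BMO}\le\Psi(p^*)$ for the $p^*$ of the statement. By Lemma~\ref{lemma:bmoproperties} the stochastic exponential $\cE(b*W)$ is then a uniformly integrable martingale, $\ud\IQ:=\cE(b*W)_T\,\ud\IP$ defines a probability measure equivalent to $\IP$ on $\cF_T$ under which the reverse H\"older inequality~\eqref{eq:reverseholderineq:trick} holds with exponent $p^*$, and by Girsanov $\widetilde W_t:=W_t-\int_0^t b_s\,\uds$ is a $\IQ$-Brownian motion. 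Under $\IQ$ the linear BSDE reads $\delta Y_t=\int_t^T a_s\delta Y_s\,\uds-\int_t^T\delta Z_s\,\ud\widetilde W_s$.

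Set $\Gamma_t:=\exp\{\int_0^t a_s\,\uds\}$; It\^o's formula shows that the drift of $\Gamma_\cdot\delta Y_\cdot$ cancels, so it is a $\IQ$-local martingale, and since $|\delta Y|\le 2R$ it is dominated by $2R\,\Gamma^*$ with $\Gamma^*:=\exp\{\int_0^T|a_s|\,\uds\}$; thus it suffices to show $\Gamma^*\in L^1(\IQ)$ to upgrade it to a uniformly integrable $\IQ$-martingale. Here one uses $|a_s|\le K+2KR^{k-1}(|Z^1_s|+|Z^2_s|)+2K\gamma R^{k-1}(|Z^1_s|+|Z^2_s|)^2$: the bounded and linear-in-$Z$ parts contribute finite exponential moments by~\eqref{eq:BMOproperties} (after a H\"older step against $\cE(b*W)_T$), while for the quadratic part one changes measure back to $\IP$ with H\"older exponents $(p^*,q^*)$ — the factor $\IE[\cE(b*W)_T^{p^*}\,|\,\cdot]$ being absorbed by~\eqref{eq:reverseholderineq:trick} — which leaves a term of the form $\IE\big[\exp\{c\,\gamma\int_0^T(|Z^1_s|^2+|Z^2_s|^2)\,\uds\}\big]$ with $c$ of order $q^*KR^{k-1}$, finite exactly under the smallness hypothesis $(2q^*K8\gamma R^{k-1})^{1/2}R^\cZ_\gamma<1$ by the John--Nirenberg inequality~\eqref{eq:johnnirenberg:ineq}. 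Consequently $\Gamma_t\delta Y_t=\IE^\IQ[\Gamma_T\delta Y_T\,|\,\cF_t]=0$, i.e.\ $Y^1=Y^2$; feeding this back into the linear BSDE gives $\int_0^\cdot\delta Z_s\,\ud\widetilde W_s\equiv 0$, hence $\int_0^T|\delta Z_s|^2\,\uds=0$ and $Z^1=Z^2$ modulo $\ud\IP\otimes\udt$-null sets.

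The main obstacle is precisely the step $\Gamma^*\in L^1(\IQ)$. In the classical quadratic setting the linearized $y$-coefficient is bounded; here the Burgers term forces $a$ to grow quadratically in $Z$, so $\int_0^T a_s\,\uds$ is only finite almost surely and carries no exponential moment for free. Producing one after the Girsanov change of measure is exactly what dictates the calibration of $p^*$ through $\Psi^{-1}$ (so that~\eqref{eq:reverseholderineq:trick} can be invoked when undoing the measure change) together with the smallness requirement on $\gamma R^{k-1}(R^\cZ_\gamma)^2$ (so that~\eqref{eq:johnnirenberg:ineq} yields the $\IP$-exponential moment of $\gamma\int_0^T|Z_s|^2\,\uds$). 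Everything else — the linearization bounds extracted from Assumption~\ref{assump:H2}, the Girsanov step, and the final martingale identification — is routine.
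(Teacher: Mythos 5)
Your proposal is correct and follows essentially the same route as the paper: the same $y$/$z$ linearization from Assumption \ref{assump:H2}, the same Girsanov change via $\cE(b*W)$ with $p^*$ calibrated through $\Psi$, and the same use of the reverse H\"older and John--Nirenberg inequalities, with the smallness of $\gamma$ entering exactly where you place it. The only (harmless) variation is that you upgrade the $\IQ$-local martingale $\Gamma\,\delta Y$ to a true martingale by a class-(D) domination argument requiring $\exp\{\int_0^T|a_s|\,\uds\}\in L^1(\IQ)$, whereas the paper instead verifies square-integrability of the stochastic integral $\int \hat e_s\,\delta Z_s\,\udws^{\widehat \IQ}$ -- which is why its condition carries the exponent $2q^*$ rather than the $q^*$ your version strictly needs.
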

\begin{remark}
1) A small clarification is necessary concerning the existence of  $\gamma$ in Theorem \ref{maintheo:ch3:uniqueness}, since $q^*$ and $R^\cZ_\gamma$ both depend on $\gamma$ and there is the possibility of a circle argument. One argues as follows, as $\gamma$ decreases so does the upper bound  for $\|Z\|_\cHBMO$ (this is intuitive but see e.g. Lemma \ref{lemma:ZisBMO}). From the definition of $\Psi$ in Lemma \ref{lemma:bmoproperties}, the smaller the upper bound for $\|Z\|_\cHBMO$ is, the greater $p^*$ is and hence the smaller $q^*$ is. So, as $\gamma$ decreases so do $q^*$ and $R^\cZ_\gamma$, making it even easier for condition \eqref{eq:gammasmallenough} to be satisfied.

2) The proof of Theorem \ref{maintheo:ch3:uniqueness} hints that it is possible to obtain the same result but under an assumption of the type $f(Y,Z)-f(U,Z) \leq  -\alpha |Z|^2 (1+\phi(Y,U))(Y-U)$ with $\phi$ a continuous \emph{positive} function. This condition would then replace the smallness assumption of $\gamma$. We do not explore this direction.
\end{remark}
Finally, we state a comparison result:
\begin{theo}[Comparison]
\label{maintheo:ch3:comparison}
Let $i\in\{1,2\}$. Define $(Y^i,Z^i) \in \cS^\infty\times \cHBMO$ as  the solution of BSDE \eqref{general-BSDE} with terminal condition $\xi^i$ and driver $f_i$. Assume that $\xi^i$ and $f_i$ satisfy Assumptions \ref{assump:H0}, \ref{assump:H1} and \ref{assump:H2}. Assume $\gamma_i$ satisfies the condition of Theorem \ref{maintheo:ch3:uniqueness} and that\footnote{The comparison theorem still holds if instead of $f_1(t,Y^2_t,Z^2_t) \leq f_2(t,Y^2_t,Z^2_t)$ one assumes $f_1(t,Y^1_t,Z^1_t) \leq f_2(t,Y^1_t,Z^1_t)$. The proof is a straightforward modification of the proof we are giving.}
\begin{align*}
\xi^1\leq \xi^2,\ \text{ and }\ f_1(t,Y^2_t,Z^2_t) \leq f_2(t,Y^2_t,Z^2_t)\ \ \ud t \otimes \ud\IP\text{-a.s.},
\end{align*}
Then we have for all $t\in[0,T]$ that $Y^1_t\leq Y^2_t$ $\IP$-a.s. 

Moreover, if either $\xi^1<\xi^2$ or $f_1(t,Y^2_t,Z^2_t) < f_2(t,Y^2_t,Z^2_t)$ in a set of positive $\ud t\otimes \ud\IP$-measure then $Y^1_0< Y^2_0$.
\end{theo}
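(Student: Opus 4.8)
The plan is to run the classical linearization argument for quadratic BSDE: turn the difference of the two equations into a \emph{linear} BSDE, remove the $z$-dependence by a Girsanov change of measure driven by a $BMO$ martingale, and remove the $y$-dependence by an exponential integrating factor. Set $\delta Y:=Y^1-Y^2$, $\delta Z:=Z^1-Z^2$, $\delta\xi:=\xi^1-\xi^2\le0$ and $\delta f_s:=f_1(s,Y^2_s,Z^2_s)-f_2(s,Y^2_s,Z^2_s)$, which is $\le0$ for $\ud t\otimes\ud\IP$-a.e.\ $(s,\om)$ by hypothesis; subtracting the two equations, $\delta Y$ has terminal value $\delta\xi$, martingale part $-\int\delta Z\,\ud W$ and ``driver'' $f_1(s,Y^1_s,Z^1_s)-f_2(s,Y^2_s,Z^2_s)$. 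Adding and subtracting $f_1(s,Y^2_s,Z^2_s)$ and $f_1(s,Y^2_s,Z^1_s)$ one writes this driver as $\lambda_s\,\delta Y_s+\langle\beta_s,\delta Z_s\rangle+\delta f_s$, where $\lambda_s:=\big(f_1(s,Y^1_s,Z^1_s)-f_1(s,Y^2_s,Z^1_s)\big)/\delta Y_s$ (and $:=0$ if $\delta Y_s=0$) and $\beta_s$ is the usual $\IR^d$-vector, of modulus at most the $z$-Lipschitz coefficient, representing $f_1(s,Y^2_s,Z^1_s)-f_1(s,Y^2_s,Z^2_s)=\langle\beta_s,\delta Z_s\rangle$. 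Invoking Assumption~\ref{assump:H2} for $f_1$ together with $\|Y^i\|_{\cS^\infty}\le R$ (write $R$, $R^\cZ_\gamma$ for the larger of the two a priori bounds and $K$, $k$, $\gamma:=\gamma_1$ for the data of $f_1$) gives $|\lambda_s|\le K\big(1+4R^{k-1}|Z^1_s|+8\gamma R^{k-1}|Z^1_s|^2\big)$ and $|\beta_s|\le K\big(1+2R^k+(1+2\gamma R^k)(|Z^1_s|+|Z^2_s|)\big)$.

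Since $Z^1,Z^2\in\cHBMO$, also $\delta Z\in\cHBMO$, and the bound on $\beta$ shows $\beta*W\in BMO$ with $\|\beta*W\|_{BMO}\le 2K(1+2R^k)\sqrt T+4K(1+2\gamma R^k)R^\cZ_\gamma$ --- exactly the argument of $\Psi^{-1}$ defining $p^*$ in Theorem~\ref{maintheo:ch3:uniqueness}; so by the second assertion of Lemma~\ref{lemma:bmoproperties} one may take that $p^*$ with $\cE(\beta*W)\in L^{p^*}$. Put $\ud\IQ:=\cE(\beta*W)_T\ud\IP$ (a probability measure by the first assertion of the lemma); then $W^\IQ_\cdot:=W_\cdot-\int_0^\cdot\beta_s\uds$ is a $\IQ$-Brownian motion, $BMO$ is preserved under this change of measure so $\delta Z*W^\IQ\in BMO(\IQ)$ (see \cite{Kazamaki1994}), and
\[
\delta Y_t=\delta\xi+\int_t^T\big(\lambda_s\,\delta Y_s+\delta f_s\big)\uds-\int_t^T\delta Z_s\,\ud W^\IQ_s .
\]
With the integrating factor $\Gamma_t:=\exp\{\int_0^t\lambda_s\uds\}>0$, It\^o's product rule gives $\ud(\Gamma_t\delta Y_t)=-\Gamma_t\delta f_t\udt+\Gamma_t\delta Z_t\,\ud W^\IQ_t$, hence, taking $\IE^\IQ[\,\cdot\mid\cF_t]$,
\[
\Gamma_t\,\delta Y_t=\IE^\IQ\Big[\,\Gamma_T\,\delta\xi+\int_t^T\Gamma_s\,\delta f_s\uds\ \Big|\ \cF_t\,\Big].
\]

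To justify this identity one needs $\Gamma_T\in L^1(\IQ)$ and $\int\Gamma_s\delta Z_s\,\ud W^\IQ_s$ to be a true $\IQ$-martingale. Now $\Gamma_s\le\exp\{KT+4KR^{k-1}\int_0^T|Z^1_s|\uds+8K\gamma R^{k-1}\int_0^T|Z^1_s|^2\uds\}$, so by H\"older against $\cE(\beta*W)_T\in L^{p^*}$ (conjugate $q^*$) it suffices that $\Gamma_T\in L^{q^*}(\IP)$; splitting off by Cauchy--Schwarz the factor carrying $\int_0^T|Z^1_s|\uds$, which has all $\IP$-moments by \eqref{eq:BMOproperties}, the binding constraint becomes $\IE\big[\exp\{16q^*K\gamma R^{k-1}\int_0^T|Z^1_s|^2\uds\}\big]<\infty$, which by the third assertion of Lemma~\ref{lemma:bmoproperties} holds as soon as $16q^*K\gamma R^{k-1}(R^\cZ_\gamma)^2<1$, i.e.\ exactly under the smallness assumption $(2q^*K8\gamma R^{k-1})^{1/2}R^\cZ_\gamma<1$ of Theorem~\ref{maintheo:ch3:uniqueness}. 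If this holds strictly one has a little room to spare, which likewise yields $\IE^\IQ[(\sup_s\Gamma_s)^2\int_0^T|\delta Z_s|^2\uds]<\infty$ (using that $\int_0^T|\delta Z_s|^2\uds$ has all $\IQ$-moments since $\delta Z*W^\IQ\in BMO(\IQ)$), so the stochastic integral is an $L^2(\IQ)$-martingale and the displayed identity is legitimate. As $\Gamma_s>0$, $\delta\xi\le0$ and $\delta f_s\le0$, its right-hand side is $\le0$, hence $Y^1_t\le Y^2_t$ for all $t$. For the strict statement take $t=0$: then $\Gamma_0=1$ and $\delta Y_0=\IE^\IQ[\Gamma_T\delta\xi]+\IE^\IQ\big[\int_0^T\Gamma_s\delta f_s\uds\big]$, and since $\IQ\sim\IP$ and $\Gamma>0$ $\IP$-a.s., a strict inequality $\xi^1<\xi^2$ on a set of positive $\IP$-measure makes the first term strictly negative and a strict driver inequality on a set of positive $\ud t\otimes\ud\IP$-measure makes the second strictly negative, whence $Y^1_0<Y^2_0$ in either case.

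I expect the main obstacle to be exactly this integrability step: the linearized $y$-coefficient $\lambda$ unavoidably inherits the term $8\gamma R^{k-1}|Z^1_s|^2$ produced by the $y^k|z|^2$ nonlinearity, so even after the ($BMO$-driven) change of measure one must control the exponential moment $\IE^\IQ[\exp\{\mathrm{const}\cdot\gamma\int_0^T|Z^1_s|^2\uds\}]$; this is the single place where the cross-quadratic term bites, and it is precisely what forces the joint smallness of $\gamma$, $R^\cZ_\gamma$ and $q^*$. Everything else is the routine comparison scheme for quadratic BSDE; as noted in the remark after Theorem~\ref{maintheo:ch3:uniqueness}, one could alternatively trade the smallness of $\gamma$ for a one-sided structural bound on $f_1(s,y,z)-f_1(s,y',z)$.
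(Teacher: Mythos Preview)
Your proof is correct and follows essentially the same route as the paper: linearize the difference BSDE, remove the $z$-coefficient by a Girsanov change of measure driven by a $BMO$ martingale (your $\beta$ is the paper's $\tilde b$), and remove the $y$-coefficient by the exponential integrating factor (your $\lambda$, $\Gamma$ are the paper's $\tilde\Gamma$, $\tilde e$), arriving at the identical representation $\Gamma_t\,\delta Y_t=\IE^\IQ[\Gamma_T\delta\xi+\int_t^T\Gamma_s\delta f_s\uds\mid\cF_t]$. The paper refers back to the uniqueness proof for the integrability justification, whereas you spell out explicitly how the smallness condition on $\gamma$ controls $\IE[\exp\{\text{const}\cdot\gamma\int_0^T|Z^1_s|^2\uds\}]$ via the John--Nirenberg inequality; your extra remark that $\delta Z*W^\IQ\in BMO(\IQ)$ (invariance of $BMO$ under $BMO$-driven Girsanov) is a clean way to get the $\IQ$-moments of $\int_0^T|\delta Z_s|^2\uds$, slightly different from but equivalent to the paper's H\"older-back-to-$\IP$ argument.
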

\begin{coro}[A priori estimate]
\label{maincoro:ch3:aprioriestimate}
Under the assumptions of Theorem \ref{maintheo:ch3:comparison} with $\gamma=0$, it holds for $\delta f_\cdot:=f_1(\cdot,Y^2_\cdot,Z^2_\cdot) - f_2(\cdot,Y^2_\cdot,Z^2_\cdot)$ that for $p\geq 2$
\begin{align}
\nonumber
\hspace{-.25cm}\|Y^1-Y^2\|_{\cS^p}^p
&\leq 
C^p_{r'} C_p C_{pr',q'}\IE\Big[\Big( |\xi^1-\xi^2| + \int_0^T |\delta f_s|\uds \Big)^{prq}\Big]^{\frac1{qr}},
\\
\label{eq:aprioriestimateofdeltaZ}
\hspace{-.25cm}\|Z^1-Z^2\|_{\cH^p}^p
&
\leq 
C \Big\{ \|\delta Y\|_{\cS^{p}}^{p} + 
\|\delta Y\|_{\cS^{2p}}^{p}
 + 
\|\delta Y\|_{\cS^{p}}^{\frac{p}2} \IE\Big[\Big(\int_0^T\hspace{-.1cm} |\delta f_s|\uds\Big)^{p}\Big]^{\frac12}
\Big\},
\end{align}
where the constants $C_{pr',q'}$, $C_p$, $C^p_{r'}$ and $C$ depend only on the constants appearing in Assumptions \ref{assump:H0}, \ref{assump:H1} and \ref{assump:H2} and the upper bounds of the BMO norms of $Z^1*W$ and $Z^2*W$. Furthermore, the numbers $r,q>1$ are related as well to the BMO norms of $Z^1*W$ and $Z^2*W$ via point 3 of Lemma \ref{lemma:bmoproperties}.
\end{coro}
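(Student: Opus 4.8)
The plan is to derive the linear BSDE solved by the difference $(\delta Y,\delta Z):=(Y^1-Y^2,Z^1-Z^2)$ and then to estimate it with the help of the $BMO$ tools of Lemma \ref{lemma:bmoproperties} and a Girsanov change of measure. Writing $\delta\xi:=\xi^1-\xi^2$ and inserting $\pm f_1(s,Y^2_s,Z^2_s)$ one finds
\[
\delta Y_t=\delta\xi+\int_t^T\big(a_s\,\delta Y_s+b_s\cdot\delta Z_s+\delta f_s\big)\uds-\int_t^T\delta Z_s\udws,
\]
where $a$ and $b$ are obtained by the usual difference-quotient linearisation of $f_1$. By Assumption \ref{assump:H2} with $\gamma=0$ and the bound $\|Y^i\|_{\cS^\infty}\le R$ one has $\ud t\otimes\ud\IP$-a.e.\ that $|a_s|\le K(1+2(|Y^1_s|^{k-1}+|Y^2_s|^{k-1})|Z^1_s|)\le K(1+4R^{k-1}|Z^1_s|)$ and $|b_s|\le K(1+2R^k+|Z^1_s|+|Z^2_s|)$. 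The choice $\gamma=0$ is what makes this work: it forces $|b_s|$ to grow only linearly in $|Z^i_s|$, so that $b*W\in BMO$ (since $Z^1*W,Z^2*W\in BMO$), and by \eqref{eq:BMOproperties} with $\varepsilon=1$ the variable $\int_0^T|a_s|\uds$ has finite exponential moments of every order.

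For the $\cS^p$-estimate I would pass to the measure $\ud\IQ=\cE(b*W)_T\,\ud\IP$, which is a probability measure by part 1) of Lemma \ref{lemma:bmoproperties}; under $\IQ$ the process $W^\IQ:=W-\int_0^\cdot b_s\uds$ is a Brownian motion and the term $b_s\cdot\delta Z_s$ disappears. Discounting by $\Gamma_t=\exp\{\int_0^t a_s\uds\}$ and verifying that $\int\Gamma\,\delta Z\,\ud W^\IQ$ is a true $\IQ$-martingale (which follows because $\Gamma$, $\cE(b*W)_T$ and $\int_0^T|\delta Z_s|^2\uds$ all have enough integrability), one obtains
\[
|\delta Y_t|\le\IE^\IQ\Big[\,e^{\int_t^T|a_s|\uds}\Big(|\delta\xi|+\int_0^T|\delta f_s|\uds\Big)\,\Big|\,\cF_t\Big]\le\IE^\IQ\big[\,A\,\zeta\,\big|\,\cF_t\big],
\]
with $A:=\exp\{\int_0^T|a_s|\uds\}$ and $\zeta:=|\delta\xi|+\int_0^T|\delta f_s|\uds$. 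Taking $\sup_t$, applying Doob's $L^{pr}(\IQ)$-maximal inequality (valid since $pr>1$), returning to $\IP$ via Hölder with conjugate exponents $(r,r')$, and then decomposing $\IE^\IQ[(A\zeta)^{pr}]=\IE[\cE(b*W)_T\,A^{pr}\,\zeta^{pr}]$ by a further Hölder inequality --- bounding $\cE(b*W)_T$ by the reverse Hölder inequality \eqref{eq:reverseholderineq:trick} and $A^{pr}$ by \eqref{eq:BMOproperties} --- leaves exactly the quantity $\IE[\zeta^{prq}]^{1/(qr)}$, which is the first assertion. The constants $C^p_{r'}$, $C_p$, $C_{pr',q'}$ collect, in this order, the $L^{r'}(\IQ)$-integrability of $\ud\IP/\ud\IQ$ (to the relevant power), Doob's constant, and the exponential-moment bound \eqref{eq:BMOproperties} for $\int_0^T|a_s|\uds$; the exponents $r,q>1$ are those supplied by part 3) of Lemma \ref{lemma:bmoproperties}.

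For $\delta Z$ I would apply Itô's formula to $|\delta Y|^2$ on $[0,T]$ and drop $|\delta Y_0|^2\ge0$ to get
\[
\int_0^T|\delta Z_s|^2\uds\le|\delta\xi|^2+2\int_0^T\delta Y_s\big(a_s\delta Y_s+b_s\cdot\delta Z_s+\delta f_s\big)\uds-2\int_0^T\delta Y_s\,\delta Z_s\udws.
\]
A weighted Young inequality applied to $\delta Y_s\,b_s\cdot\delta Z_s$ absorbs a fraction of $\int_0^T|\delta Z_s|^2\uds$ into the left side; then, after raising to the power $p/2$ and taking expectations, the martingale term is handled by Burkholder--Davis--Gundy and a second weighted Young inequality (absorbing $\IE[(\int_0^T|\delta Z_s|^2\uds)^{p/2}]$ once more), while the remaining terms are estimated by Hölder and Cauchy--Schwarz, using $|\delta\xi|=|\delta Y_T|\le\|\delta Y\|_{\cS^p}$ and the finiteness of all moments of $\int_0^T|a_s|\uds$ and $\int_0^T|b_s|^2\uds$ (again from Lemma \ref{lemma:bmoproperties} since $Z^i*W\in BMO$). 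This produces precisely the three summands $\|\delta Y\|_{\cS^p}^p$, $\|\delta Y\|_{\cS^{2p}}^p$ and $\|\delta Y\|_{\cS^p}^{p/2}\IE[(\int_0^T|\delta f_s|\uds)^p]^{1/2}$ of \eqref{eq:aprioriestimateofdeltaZ}.

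The step I expect to be the main obstacle is the bookkeeping of the exponents in the $\cS^p$-estimate: one has to choose $r,r',q,q'$ and $p^*$ compatibly so that simultaneously $\cE(b*W)_T\in L^{p^*}$, a large enough power of $\int_0^T|a_s|\uds$ is exponentially integrable via \eqref{eq:BMOproperties}, and Doob's inequality applies, all the while keeping track of how these exponents depend on $\|Z^1*W\|_{BMO}$ and $\|Z^2*W\|_{BMO}$. Once this is arranged, the remainder is a routine (if lengthy) combination of Girsanov's theorem, Doob's inequality, Itô's formula, Burkholder--Davis--Gundy and Hölder's inequality.
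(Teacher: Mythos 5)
Your proof follows essentially the same route as the paper's: the same linearization of the driver difference, a Girsanov change of measure built from the $z$-quotient $b$, exponential discounting by the $y$-quotient $a$, and the same combination of the reverse H\"older inequality, the exponential-moment bound \eqref{eq:BMOproperties} and Doob's inequality for the $\cS^p$-estimate, followed by It\^o's formula applied to $|\delta Y|^2$ with Young's and Burkholder--Davis--Gundy inequalities for the $\cH^p$-estimate. The only cosmetic difference is that you apply Doob under $\IQ$ before returning to $\IP$, whereas the paper stays under $\IP$ and uses the conditional reverse H\"older inequality \eqref{eq:reverseholderineq:trick} to cancel the factor $\cE_t^{-1}$; both bookkeepings yield the stated constants.
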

The following subsections contain the proofs of the above theorems.
\subsubsection{Proof of the Theorem \ref{maintheo:ch3:existence} - Existence}

As we mentioned earlier, we now need to state and prove a sequence of results on a certain family of BSDEs that approximates the BSDE \eqref{general-BSDE}.
\subsubsection*{A ``truncation of the identity'' function family}
We start by defining a family of smooth functions that truncate the identity function, namely
\begin{defi}
For each $n\in\IN$ let ${h}_n:\IR\to\IR$ be a continuously differentiable function with the following properties:
\begin{itemize}
	\item $|h_n(x)|\leq n$, $|h_n(x)|\leq |x|$ and $|\nabla_x h_n(x)|\leq 1$ for all $x\in\IR$; 
	\item $h_n(x)=x$ for any $x \in [-(n-1), (n-1)]$ and $h_n(x)=n$ for any $x$ outside $(-n,n)$;
\end{itemize}
	The sequence $({h}_n)_{n\in\IN}$ converges locally uniformly to the identity function; the sequence $(\nabla_x h_n)_{n\in\IN}$ converges to $1$ locally uniformly.

We call the family of functions $(h_n)_{n\in\IN}$ a \emph{(differentiable) truncation of the identity}\footnote{Such a family of functions clearly exists, for an explicit example we point to Subsection 3.2.1 of \cite{dosreis2011}.}.
\end{defi}
\subsubsection*{A family of truncated drivers and their properties}
With the above defined family of functions we take a driver $f$ satisfying Assumptions \ref{assump:H1} and \ref{assump:H2} and define the sequence $(f_n)_{n\in\IN}$ through 
\[
f_n(t,y,z)=f\big(t,h_n(y),z\big),\quad \text{for }(\omega,t,y,z)\in\Omega\times[0,T]\times \IR\times \IR^d,\, n\in\IN.
\]
Using the properties of $h_n$ (along with $|z|\leq 1+|z|^2$) and in view of \eqref{H2growth}, it is clear that there exists a sequence $(C_n)_{n\in\IN}$ such that for any $(y,z)\in\IR\times\IR^d,\, n\in\IN$
\begin{align}
\label{eq:fn:growth}
&|f_n(\cdot,y,z)|\leq K\big( 1+C_n(1+|z|^2)\big)\quad \text{with } \lim_{n\to\infty} C_n=\infty,
\\
\label{eq:fn:growthindep:n}
&|f_n(\cdot,y,z)|\leq K\big( 1+|y|+(1+ |y|^k)|z| +(1+\gamma |y|^k) |z|^2\big),
\end{align}
where the latter inequality corresponds to \eqref{H2growth} for $f_n$. 

It is also easy to see that for any $(y,z),(y',z')\in\IR\times\IR^d$ and $n\in\IN$ the difference $|f_n(\cdot,y,z)-f_n(\cdot,y',z')|$ satisfies condition \eqref{H2lip} with constants independent of $n$: just use that $|h_n(x)|\leq |x|$ along with the mean value theorem on $h_n$ and the fact that $|\nabla h_n|\leq 1$.

\subsubsection*{A family of truncated BSDE and results on them}
With the family of drivers $(f_n)_{n\in\IN}$ we define a family of approximating BSDE obtained by replacing $f$ in \eqref{general-BSDE} by $f_n$ so that
\begin{align}
\label{approximative-general-BSDE}
Y^n_t &= \xi +\int_t^T f_ n(s,Y^n_s,Z^n_s)\uds -\int_t^T Z^n_s \udws.
\end{align}
The next result states the existence and uniqueness of the solution to \eqref{approximative-general-BSDE}. The argumentation here follows two steps: in the first we show existence of a solution to \eqref{approximative-general-BSDE} where we obtain upper bounds for the solution that depend on $n\in\IN$. In a second step, we twist the arguments and obtain upper bounds for the norms of $\cS^\infty$ and $\cHBMO$ independent of the truncation height $n\in\IN$.
\begin{lemma}
Let Assumptions \ref{assump:H0} and  \ref{assump:H1} hold. For each $n\in\IN$, BSDE \eqref{approximative-general-BSDE} has a (unique minimal/maximal) solution $(Y^n,Z^n)$ in $\cS^\infty\times \cHBMO$. 
\end{lemma}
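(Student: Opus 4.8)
The plan is to obtain this lemma as an immediate consequence of the two results quoted above, applied to the truncated driver $f_n$. First I would fix $n\in\IN$ and observe that, by the growth estimate \eqref{eq:fn:growth}, the driver $f_n$ satisfies the hypothesis \eqref{eq:koby2000:exist:prop:condition} of Proposition \ref{theo:2.3Koby2000} with the ($n$-dependent) nonnegative constants $A=K(1+C_n)$, $B=0$ and $F=KC_n$. Together with Assumption \ref{assump:H0} on $\xi$, Proposition \ref{theo:2.3Koby2000} then yields a solution $(Y^n,Z^n)\in\cS^\infty\times\cH^2$ of \eqref{approximative-general-BSDE} with $Y^n$ having continuous paths, as well as a unique minimal and a unique maximal solution in the comparison sense stated there; this delivers existence together with the ``(unique minimal/maximal)'' part of the claim.

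The remaining point is to upgrade the control process from $\cH^2$ to $\cHBMO$. For this I would invoke Lemma \ref{lemma:ZisBMO}: since $f_n$ satisfies \eqref{eq:koby2000:exist:prop:condition}, the pair $(Y^n,Z^n)$ meets the hypotheses of that lemma, whence $Z^n*W\in BMO$ and therefore $Z^n\in\cHBMO$. Inspecting the proof of Lemma \ref{lemma:ZisBMO} also shows that $\|Z^n\|_{\cHBMO}$ is bounded by a constant depending only on $\|\xi\|_{L^\infty}$, $\|Y^n\|_{\cS^\infty}$ and the constants $A,B,F$, hence at this stage still depending on $n$ through $C_n$.

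There is no genuine obstacle here; the step is essentially a citation of the two preceding results. The only subtlety worth flagging, and the reason this lemma is kept separate from the ``twisting'' argument that follows, is that every bound produced at this stage (both for $\|Y^n\|_{\cS^\infty}$ and for $\|Z^n\|_{\cHBMO}$) still blows up with $n$, since $C_n\to\infty$. Accordingly, I would treat this lemma purely as well-posedness at each fixed truncation level, deferring the extraction of bounds uniform in $n$ — which is what makes the limit $n\to\infty$ work — to the subsequent lemmas, where one exploits the sharper estimates \eqref{eq:fn:growthindep:n} and \eqref{H2lip} in place of the crude quadratic bound \eqref{eq:fn:growth}.
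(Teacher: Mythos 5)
Your argument is exactly the paper's: the lemma is proved there by citing Proposition \ref{theo:2.3Koby2000} and Lemma \ref{lemma:ZisBMO} with the $n$-dependent growth bound \eqref{eq:fn:growth}, which is precisely what you do (and your identification $A=K(1+C_n)$, $B=0$, $F=KC_n$ is correct). Your closing remark that all bounds at this stage blow up with $n$ and that uniformity is deferred to the next lemma also matches the paper's organization.
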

\begin{proof}
This follows immediately from Proposition \ref{theo:2.3Koby2000} and Lemma \ref{lemma:ZisBMO} applied to \eqref{approximative-general-BSDE}, using that $f_n$ satisfies \eqref{eq:fn:growth}.
\end{proof}
The next lemma shows that it is possible to estimate the norms of $(Y^n,Z^n)$ solution to \eqref{approximative-general-BSDE} in $\cS^\infty \times \cHBMO$ by universal constants independent of $n\in\IN$ imposing\emph{ only} Assumptions \ref{assump:H0} and  \ref{assump:H1}. We use  \eqref{eq:fn:growthindep:n} instead of \eqref{eq:fn:growth}.
\begin{lemma}
\label{lemma:exist:uniq:ofYnZn}
Let Assumptions \ref{assump:H0} and \ref{assump:H1} hold. Let $n\in\IN$ and take $(Y^n,Z^n)$ to be the solution of BSDE \eqref{approximative-general-BSDE} as given in the previous lemma, then 
\begin{align*}
\sup_{n\in\IN}\big\{\, \|Y^n\|_{\cS^\infty} + \|Z^n\|_{\cHBMO}\,\big\}< \infty.
\end{align*}
In particular $\sup_n \|Y^n\|_{\cS^\infty}\leq e^{KT}(M+KT)$, which is independent of $\gamma$.
\end{lemma}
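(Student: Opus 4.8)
\noindent\emph{Proof strategy.}
Start from the solution $(Y^n,Z^n)\in\cS^\infty\times\cHBMO$ of \eqref{approximative-general-BSDE} supplied by the previous lemma; its norms are finite but a priori depend on $n$. The plan is a two-stage bootstrap. In the first stage I would upgrade the bound on $\|Y^n\|_{\cS^\infty}$ to one depending only on $K$, $T$ and $M=\|\xi\|_{L^\infty}$, by a change of measure that moves \emph{all} the $|z|$- and $|z|^2$-terms of the driver into a Girsanov drift; in the second stage I would feed this bound back into the growth estimate \eqref{eq:fn:growthindep:n} to control $\|Z^n*W\|_{BMO}$ uniformly in $n$.

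\noindent\emph{Stage 1.}
Put $A^n_s:=f_n(s,Y^n_s,Z^n_s)$ and decompose $A^n_s=B^n_s+\langle\hat q^n_s,Z^n_s\rangle$, where $\hat q^n_s$ is the predictable vector proportional to $Z^n_s$ (with $\hat q^n_s:=0$ on $\{Z^n_s=0\}$) chosen so that $\langle\hat q^n_s,Z^n_s\rangle=\sgn(A^n_s)\big(|A^n_s|-K(1+|Y^n_s|)\big)^+$. By construction $|B^n_s|=\min\!\big(|A^n_s|,K(1+|Y^n_s|)\big)\le K(1+|Y^n_s|)$, while \eqref{eq:fn:growthindep:n} forces the ``excess'' to satisfy $\big(|A^n_s|-K(1+|Y^n_s|)\big)^+\le K(1+|Y^n_s|^k)|Z^n_s|+K(1+\gamma|Y^n_s|^k)|Z^n_s|^2$, so that $|\hat q^n_s|\le K(1+|Y^n_s|^k)+K(1+\gamma|Y^n_s|^k)|Z^n_s|$. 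Since $Y^n\in\cS^\infty$ and $Z^n*W\in BMO(\IP)$, this gives $\hat q^n*W\in BMO(\IP)$. Hence $\ud\IQ^n:=\cE(\hat q^n*W)_T\,\ud\IP$ is a probability measure (Lemma \ref{lemma:bmoproperties}, 1)), $W^{\IQ^n}:=W-\int_0^\cdot\hat q^n_s\,\uds$ is a $\IQ^n$-Brownian motion, and — using that $\cE(\hat q^n*W)\in L^{p^*}$ for some $p^*>1$ by Lemma \ref{lemma:bmoproperties}, 2), together with $Z^n\in\cH^q(\IP)$ for every $q\ge2$ — H\"older's inequality yields $Z^n\in\cH^2(\IQ^n)$, so $Z^n*W^{\IQ^n}$ is a true $\IQ^n$-martingale. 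Rewriting \eqref{approximative-general-BSDE} as $Y^n_t=\xi+\int_t^T B^n_s\,\uds-\int_t^T Z^n_s\,\ud W^{\IQ^n}_s$ and conditioning on $\cF_t$ under $\IQ^n$ gives, $\IP$-a.s., $|Y^n_t|\le M+\IE^{\IQ^n}\big[\int_t^T K(1+|Y^n_s|)\,\uds\mid\cF_t\big]$. Taking essential suprema in $\omega$ and using $K(T-t)\le KT$, Gronwall's lemma delivers $\|Y^n\|_{\cS^\infty}\le(M+KT)e^{KT}$, independent of $n$ and of $\gamma$.

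\noindent\emph{Stage 2.}
Write $M^*:=(M+KT)e^{KT}$. Inserting $|Y^n_s|\le M^*$ into \eqref{eq:fn:growthindep:n} and absorbing the linear-in-$|z|$ term via $|z|\le\tfrac12(1+|z|^2)$ gives, along the solution, $|f_n(s,Y^n_s,Z^n_s)|\le\widehat A+\widehat F|Z^n_s|^2$ with $\widehat A,\widehat F$ depending only on $K$, $T$, $M$, $k$ and $\gamma$ but not on $n$. Reproducing the computation in the proof of Lemma \ref{lemma:ZisBMO} — It\^o's formula applied to $\exp\{\alpha Y^n_t\}$ on $[\tau,T]$ for a stopping time $\tau$, with $\alpha>2\widehat F$, followed by conditioning on $\cF_\tau$ — then bounds $\|Z^n*W\|_{BMO(\IP)}=\|Z^n\|_{\cHBMO}$ by a constant depending only on $M^*$, $M$, $\widehat A$ and $\widehat F$, hence uniformly in $n$. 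Together with Stage 1 this proves $\sup_n(\|Y^n\|_{\cS^\infty}+\|Z^n\|_{\cHBMO})<\infty$ and the explicit bound $\sup_n\|Y^n\|_{\cS^\infty}\le e^{KT}(M+KT)$.

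\noindent\emph{Main obstacle.}
The one genuine difficulty is the uniformity in $n$ — and the independence of $\gamma$ — of the $\cS^\infty$-bound: a direct exponential-transform estimate for $Y^n$ produces a constant that degenerates as the coefficient $\gamma|Y^n|^k$ of the quadratic term grows, which makes that route circular. The fix is the change of measure of Stage 1, which is licit precisely because the crude, possibly $n$-dependent, bounds from the previous lemma already place $\hat q^n*W$ in $BMO$; under $\IQ^n$ the driver is ``seen'' as affine in $y$ with slope $K$, and Gronwall closes. Stage 2 is then routine bootstrapping, and the auxiliary facts ($\hat q^n*W\in BMO$ and the $\IQ^n$-martingale property of $Z^n*W^{\IQ^n}$) are standard $BMO$ manipulations.
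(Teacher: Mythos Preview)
Your proposal is correct and follows essentially the same approach as the paper: a Girsanov change of measure (licensed by the a priori $n$-dependent $\cS^\infty\times\cHBMO$ bounds) to strip away all $z$-growth from the driver and obtain the uniform, $\gamma$-independent $\cS^\infty$-bound, followed by feeding this back into the exponential-transform argument of Lemma~\ref{lemma:ZisBMO} for the uniform $\cHBMO$-bound. The only cosmetic differences are that the paper linearizes via an integrating factor $e^n_t=\exp\{\int_0^t K\,\sgn(Y^n_s)\,\uds\}$ applied to $Y^n$ (rather than your direct Gronwall after conditioning), and for Step~3 the paper notes that once $\sup_n\|Y^n\|_{\cS^\infty}<\infty$ the truncation is inactive for all $n\ge n^*$, whereas you bound the growth coefficients directly; both routes lead to the same conclusion.
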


\begin{proof}
Let $t\in[0,T]$ and $n\in\IN$.

{\sf Step 1 - preparation:}
We start by going back to BSDE \eqref{approximative-general-BSDE}. Let $(Y^n,Z^n)$ solve BSDE \eqref{approximative-general-BSDE}, then we can decompose the growth of driver $f_n(t,Y_t^n,Z^n_t)$ by using the growth assumption \eqref{eq:fn:growthindep:n} in the following way: 
\begin{align*}
|f_n(t,Y_t^n,Z^n_t)|&\leq K + \beta^n_t Y^n_t+ b^n_t \cdot Z^n_t,
\end{align*}
where $\beta^n$ and $b^n$ are two processes valued in $\IR$ and $\IR^d$ respectively defined by: (the number $k$ follows from Assumption \ref{assump:H1})
\begin{align*}
\beta^n_t:=K\,\sgn(Y^n_t),\ \
\text{and}\ \ b^n_t:= K\,\frac{1+|Y^n_t|^k+(1+\gamma |Y^n_t|^k)|Z^n_t|\,}{|Z_t^n|}Z_t^n\1_{\{|Z_t^n|\neq 0 \}} .
\end{align*}
It is clear that $\sup_{n\in \IN,\,t\in[0,T]}\|\beta^n\|_{\cS^\infty}<\infty$ and $(\beta^n Y^n)= K|Y^n|$. 
Moreover, $b^n\cdot Z^n=\big(1+|Y^n|^k+(1+\gamma |Y^n|^k)|Z^n|\big)|Z^n|$ and since $Y^n\in\cS^\infty$, $Z^n\in\cHBMO$ and \eqref{H2growth} we have
\[
|b^n|\leq K\big\{1+|Y^n|^k+\big(1+\gamma |Y^n|^k\big) |Z^n|\big\}\quad \Rightarrow \quad b^n*W\in BMO.
\]
Hence the probability measure $\IQ^n$ with Radon-Nikodym density $\ud \IQ^n/\ud \IP = \cE\big(b^n*W\big)_T$ is well defined and with relation to which $W^{\IQ^n}_\cdot=W_\cdot-\int_0^\cdot b^n_s\uds$ is a Brownian motion.

{\sf Step 2 - uniform $\cS^\infty$ bound for $Y^n$:} We set $e_t^n:=\exp\{ \int_0^t \beta^n_s\uds\}$, which satisfies $\e^{-KT}\leq e^n\leq \e^{KT}$, and use a change of measure to show that an upper bound for $\|Y^n\|_ {\cS^\infty}$ can be obtained independently of $n$ (and $\gamma$).

We take $(Y^n,Z^n)$ as the solution of BSDE \eqref{approximative-general-BSDE} and with the help of processes $\beta^n$, $b^n$ and $e^n$ defined above we write the BSDE for $e_t^n Y^n_t$ under $\IQ^n$ via It\^o's formula. We have then
\begin{align}
\nonumber
&e_t^n Y^n_t 
\leq  e^n_T \xi - \int_t^T e^n_s Z^n_s \udws
+ \int_t^T e_s^n[ -\beta^n_s Y^n_s+|f_n(s,Y^n_s,Z^n_s)|\, ] \uds 
\\ \nonumber
&\phantom{e_t^n Y^n_t} 
\leq e^n_T \xi - \int_t^T e^n_s Z^n_s \udws
+ \int_t^T e_s^n[ -\beta^n_s Y^n_s+ K+\beta^n_s Y^n_s + b^n_s\cdot Z^n_s\, ] \uds 
\\ \label{eq:lin:and:measurechange:argument:for:BSDE}
&\phantom{e_t^n Y^n_t} 
\leq  \e^{KT}M - \int_t^T e^n_s Z^n_s\udws^{\IQ^n}+ \e^{KT}K(T-t).
\end{align}
where we used the properties of the process $e^n$, the measure change to $\IQ^n$ and that $\xi\in{L^\infty}$. We obtain by taking $\IQ^n$-conditional expectation that for some positive constant $C$ and for all $t\in[0,T]$
\begin{align*}
&Y^n_t \leq e^{KT}(M+KT) \ \Rightarrow \ \sup_{n\in\IN}\| Y^n \|_{\cS^\infty}\leq \e^{KT}(M+KT).
\end{align*}
where the last conclusion follows from using similar arguments to obtain a lower bound for $Y^n$. 

{\sf Step 3 - uniform $\cHBMO$ bound for $Z^n$:} The argument uses that we already know that $\sup_{n\in\IN} \|Y^n\|_{\cS^\infty}<\infty$. This yields the existence of  $n^*\in\IN$ such that for any $n\in\IN$ with $n\geq n^*$
\begin{align}
\label{eq:truncdriver:for:big:n}
f_n(t,Y^n,Z^n)
&
=f_{n^*}(t,Y^n,Z^n)=f(t,Y^n,Z^n).
\end{align}
The arguments used to prove Lemma \ref{lemma:ZisBMO} imply that the upper bound for the BMO norm of $Z*W$ depends only on the problem data and an upper bound for $\|Y\|_{\cS^\infty}$. Hence we get here that $\sup_{n\in \IN} \|Z^n\|_{\cHBMO} < \infty$.
\end{proof}
\subsubsection*{The proof of the existence theorem}
We are now ready to prove the existence theorem. Lemma \ref{lemma:exist:uniq:ofYnZn} yields a sequence $(Y^n,Z^n)_{n\in\IN}$ of processes belonging to $\cS^\infty\times \cHBMO$ that solve BSDE \eqref{approximative-general-BSDE} for each $n\in\IN$. Moreover, the processes' respective norms are bounded uniformly in $n$.  We need only to conclude that a limit for the sequence $(Y^n,Z^n)_{n\in\IN}$ exists and that the said limit solves BSDE \eqref{general-BSDE} under Assumption \ref{assump:H1}.
\begin{proof}[Proof of Theorem \ref{maintheo:ch3:existence}]
 In view of \eqref{eq:truncdriver:for:big:n} it is clear that for each $n>n^*$ pairs $(Y^n,Z^n)$ solve both the BSDE \eqref{approximative-general-BSDE} and \eqref{general-BSDE}. And hence we obtain the ``existence of solution'' result.

Moreover, since for each $n>n^*$ there exists a maximal solution to \eqref{approximative-general-BSDE}, we have the existence of a (infinite amount of) maximal solutions to \eqref{general-BSDE}. It is clear that the maximal solution is unique, in the sense given in the theorem's statement. If $(Y,Z)$ and $(\widehat Y, \widehat Z)$ are two maximal solutions then we have that $\widehat Y \leq Y$ and also $Y\leq \widehat Y$. This implies also the uniqueness of a maximal solution of BSDE \eqref{general-BSDE}.
\end{proof}

\subsubsection{Proof of Theorem \ref{maintheo:ch3:uniqueness} - Uniqueness}
We now present the proof of the uniqueness result. As usual to prove existence, one needs only bounds on the growth of the involved functions but to prove uniqueness one needs to control the modulus of continuity of the involved functions, hence for uniqueness one additionally needs Assumption~\ref{assump:H2}.

Unfortunately, the arguments we present here do not allow for general choices of $\gamma$ (from Assumption \ref{assump:H1} or \ref{assump:H2}). Here $\gamma$ has to be small enough. This smallness enters in play in \eqref{eq:uniq:eiswelldefined} below. In the later Subsection \ref{subsec:particularcase} we discuss a particular situation where it is possible to solve the BSDE for any value of $\gamma$.

\begin{proof}[Proof of Theorem \ref{maintheo:ch3:uniqueness}]
Let $t\in[0,T]$ and assume that $(Y,Z)$ and $(U,V)$, belonging to the space $\cS^\infty\times \cHBMO$, are two solutions to the BSDE \eqref{general-BSDE}. The $\cS^\infty$-norms of $Y,U$ are universally bounded by a constant, say $R$, independently of  $\gamma$ (see Lemma \ref{lemma:exist:uniq:ofYnZn}); the $\cHBMO$-norm of $Z$, $U$ are bounded by a constant $R^\cZ_\gamma$ that depends on $\gamma$ and decreases as $\gamma$ decreases (see the proof of Lemma \ref{lemma:ZisBMO}).

Now, define $\delta Y=Y-U$, $\delta Z=Z-V$ and the processes 
\begin{align}
\label{auxeq:uniq:exp}
\Gamma_t&:=\frac{f(t,Y_t,Z_t)-f(t,U_t,Z_t)}{Y_t-U_t}\indicfunc_{\{Y_t-U_t\neq 0\}},\quad 
\hat{e}_t:=\exp\Big\{ \int_0^t  \Gamma_s \uds \Big\},\\
\label{auxeq:uniq:rdnydensity}
&\qquad \text{and}\quad \hat{b}_t=\frac{f(t,U_t,Z_t)-f(t,U_t,V_t)}{|Z_t-V_t|^2}(Z_t-V_t)\indicfunc_{\{|Z_t-V_t|\neq 0\}}.
\end{align}
Concerning the above defined processes, we remark that for $\hat{b}$ we have from \eqref{H2lip} (and the triangular inequality of the $\cHBMO$-norm) that 
\begin{align*}
|\hat{b}|&\leq K\big\{ 1+2R^k+(1+2\gamma R^k)(|Z|+|V|)\big\},\\
\|\hat{b}\|_{\cHBMO}&\leq \widehat{C}:= 2K\big\{(1+2R^k)\sqrt{T}+2(1+2\gamma R^k)R^\cZ_\gamma\big\}.
\end{align*}
Since $Z,V\in\cHBMO$, it is easy to see that the process $\hat{b}*W\in BMO$ and hence the probability measure $\widehat{\IQ}$ with Radon-Nykodim density $\ud\widehat{\IQ}/\ud\IP = \cE(\hat{b}*W)$ is well defined and $W^{\widehat{\IQ}}_\cdot = W_\cdot - \int_0^\cdot \hat{b}_s\uds$ is a $\widehat{\IQ}$-Brownian motion.

Moreover, from Lemma \ref{lemma:bmoproperties} and defining $p^*=\Psi^{-1}(\widehat{C})$ with $q^*$ its H\"older conjugate we have that $\cE(\hat{b}*W)\in L^{p^*}$.

In view of \eqref{H2lip} we have that $|\Gamma| \leq K(1+4R^{k-1}|Z|+8\gamma R^{k-1}|Z|^2)$. Since $Z\in\cHBMO$, the properties of BMO martingales, namely \eqref{eq:BMOproperties}, combined with the behavior of $R^\cZ_\gamma$ (described in the proof of Lemma \ref{lemma:ZisBMO}) imply that it is possible to find a $\gamma$ small enough such that\footnote{In short $|\Gamma|\leq \gamma |Z|^2$ and we are faced with the question of the integrability of $\hat{e}$. From the John-Nirenberg inequality in \eqref{eq:johnnirenberg:ineq} we are only able to ensure the integrability of $\hat{e}$ when the constant $\gamma$ is sufficiently small. In general the quadratic variation of a BMO process $Z*W$ is not exponentially integrable.}: 
\begin{align}
\label{eq:gammasmallenough}
(2q^*K8\gamma R^{k-1})^{\frac12}< \dfrac{1}{R^\cZ_\gamma}\leq\dfrac{1}{\|Z*W\|_{BMO}}.
\end{align}
In view of \eqref{eq:johnnirenberg:ineq} and the strictness of \eqref{eq:gammasmallenough} it is always possible to find a small enough $\varepsilon>0$ such that
\begin{align}
\label{eq:uniq:eiswelldefined}
\hat{e}_T^{2q^*}\in L^{1+\varepsilon}.
\end{align}
It is also clear that $\hat{e}$ has continuous paths.

{\sf Step 1 - Uniqueness of the solution's first component:} With the help of the processes $\hat{b}$ and $\hat{e}$ defined above we proceed as in step 2 of the proof of Theorem \ref{maintheo:ch3:existence} (we skip some details and point the reader to \eqref{eq:lin:and:measurechange:argument:for:BSDE}). Using It\^o's formula, we write a BSDE for $(\hat{e}_t \delta Y_t)$ under the $\widehat{\IQ}$-measure:
\begin{align}
\nonumber
& \hat{e}_t \delta Y_t +\int_t^T \hat{e}_s \delta Z_s\udws^{\widehat{\IQ}} \\
&\qquad = 0 
	+ \int_t^T \hat{e}_s[\,
-\Gamma_s \delta Y_s + f(s,Y_s,Z_s)-f(s,U_s,Z_s)] \uds =0
\label{comparison-Uniq-BSDE:2nd:line}
\end{align}
We obtain $\hat{e}_t\, \delta Y_t=0$ $\widehat{\IQ}$- and $\IP$-a.s.\! for all $t\in[0,T]$ once we justify that the stochastic integral $\int_0^\cdot \hat{e}_s \delta Z_s\udws^{\widehat{\IQ}}$ is indeed a true $\widehat{\IQ}$-martingale. 
For this we have to essentially prove the square integrability of the process, i.e. the finiteness of
\begin{align*}
\IE^{\widehat{\IQ}}\big[\int_0^T |\hat{e}_s|^2 |\delta Z_s|^2\ud s\big]
&
\leq 
\IE\Big[ \cE(\hat{b}*W)\, |\hat{e}_T|^2\,\int_0^T |\delta Z_s|^2\ud s\Big]
\\
&
\leq 
\IE\Big[ \Big(\cE(\hat{b}*W)\Big)^{p^*}\Big]^{\frac1{p^*}}
\IE\Big[ |\hat{e}_T|^{2q^*}\Big(\int_0^T |\delta Z_s|^2\ud s\Big)^{q^*}\Big]^{\frac{1}{q^*}}.
\end{align*}
Combining now with the fact that $\cE(\hat{b}*W)\in L^{p^*}$, that $\delta Z_s \in \cHBMO$ and hence $\delta Z\in \cH^p$ for all $p\geq 2$ and finally \eqref{eq:uniq:eiswelldefined}, we can obtain the sought conclusion.

At this point it is clear that $\hat{e}_t\delta Y_t=0$ for every $t\in[0,T]$ $\IP$-a.s. We now want to conclude that we also have $\delta Y=0$. The continuity of $\hat{e}$ (and $\delta Y$) yield that there exists a set $A\in\Omega$ satisfying $\IP[A]=0$ such that
\[
\text{for all } t\in[0,T], \  \omega \in \Omega \backslash A \ \text{ we have } \hat{e}_t(\omega)>0.
\]
Now, given the positivity of $(\hat{e}_t)_{t\in[0,T]}$ we can conclude that $Y_t=U_t$ for any $t\in[0,T]$ $\IP$-a.s.

{\sf Step 2 -  Uniqueness of the solution's second component:} We are missing only the uniqueness proof for the control component of the BSDE. Here we return to \eqref{comparison-Uniq-BSDE:2nd:line} and take advantage of the fact that we already know that $\delta Y=0$. We have, by using It\^o's Isometry, that $\IE^{\widehat{\IQ}}[\int_0^T |\hat{e}_s|^2|\delta Z_s|^2\uds]=0$, from here using the positivity of $(\hat{e}_t)_{t\in[0,T]}$ (see the above step), we conclude that $|\delta Z_t|=|Z_t-V_t|=0$ $\udt\otimes\IP$-a.s. 
This concludes the proof.
\end{proof}

\subsubsection{Proof of Theorem \ref{maintheo:ch3:comparison} and Corollary \ref{maincoro:ch3:aprioriestimate} - Comparison}
We now prove Theorem \ref{maintheo:ch3:comparison} and Corollary \ref{maincoro:ch3:aprioriestimate}. The first parts of these proofs are very similar to that of Theorem \ref{maintheo:ch3:uniqueness} and so we do not give the full details.
\begin{proof}[Proof of Theorem \ref{maintheo:ch3:comparison}] Let $t\in[0,T]$.  Define $\delta Y=Y^1-Y^2$, $\delta Z=Z^1-Z^2$, $\delta \xi = \xi^1-\xi^2$ and $\delta f_\cdot = f_1(\cdot,Y^2_\cdot,Z^2_\cdot) - f_2(\cdot,Y^2_\cdot,Z^2_\cdot)$. 

We define as well, processes similar to those of \eqref{auxeq:uniq:exp} and 
\eqref{auxeq:uniq:rdnydensity}, namely:
\begin{align}
\label{auxeq:comp:exp}
\tilde{\Gamma}_t&:=\frac{f_1(t,Y^1_t,Z^1_t)-f_1(t,Y^2_t,Z^1_t)}{Y^1_t-Y^2_t}\indicfunc_{\{Y^1_t-Y^2_t\neq 0\}},\quad 
\tilde{e}_t:=\exp\Big\{ \int_0^t  \tilde{\Gamma}_s \uds \Big\},\\
\label{auxeq:comp:rdnydensity}
&\quad \text{and}\quad \tilde{b}_t=\frac{f_1(t,Y^2_t,Z^1_t)-f_1(t,Y^2_t,Z^2_t)}{|Z^1_t-Z^2_t|^2}(Z^1_t-Z^2_t)\indicfunc_{\{|Z^1_t-Z^2_t|\neq 0\}}.
\end{align}
And as before, with $\tilde{b}*W$ we define a new probability measure $\widetilde{\IQ}$ with Radon-Nikodym density $\ud\widetilde{\IQ}/\ud\IP=\cE(\tilde{b}*W)$ and $W^{\widetilde{\IQ}}_\cdot=W_\cdot-\int_0^\cdot \tilde{b}_s\uds$ is a $\widetilde{\IQ}$-Brownian motion.

From the BSDE for $\delta Y$, and It\^o's formula we can write a BSDE for the process $\tilde{e}_t \delta Y_t$  as
\begin{align}
\nonumber
\tilde{e}_t\delta Y_t&= \tilde{e}_T\delta \xi -\int_t^T \tilde{e}_s \delta Z_s\udws^{\widetilde{\IQ}} + \int_t^T\tilde{e}_s  \, \delta f_s \uds
\\
& \label{eq:aprioriestimate:proofofmmaincomptheo}
= \IE^{\widetilde{\IQ}}\Big[ \tilde{e}_T\delta \xi + \int_t^T\tilde{e}_s  \, \delta f_s \uds  \,\Big|\cF_ t\Big].
\end{align}
Using that $\tilde{e}_t>0$ and the theorem's assumptions, $\delta \xi \leq 0$ and $\delta f_t\leq 0$, we conclude that $\tilde{e}_t\delta Y_t\leq 0$ and hence that for all $t\in[0,T]$ it holds that $\delta Y_t=Y^1_t-Y^2_t\leq 0$ $\widehat{\IQ}$-a.s. and hence also $\IP$-a.s.

We see also that at $t=0$ if $\delta \xi <0$ or if $\delta f<0$ in a set of positive $\ud t \otimes \ud \IP$-measure, then the inequality is strict and we conclude that $\delta Y_0=Y^1_0-Y^2_0<0$, which proves the result.
\end{proof}
We finish this subsection with the proof of Corollary \ref{maincoro:ch3:aprioriestimate}. This proof is close to that of Lemma 3.2 in \cite{ImkellerDosReis2010} but with a different argumentation in what the linearization trick is concerned.
\begin{proof}[Proof of Corollary \ref{maincoro:ch3:aprioriestimate}]$\phantom{1}$

{\sf Step 1 -  The estimate for $Y^1-Y^2$:} To prove this corollary we stop at the point of inequality \eqref{eq:aprioriestimate:proofofmmaincomptheo} in the proof of Theorem \ref{maintheo:ch3:comparison} and continue it from there but in a different fashion. Recall that $\tilde{e}$ and $\tilde{\Gamma}$ were defined in \eqref{auxeq:comp:exp} and \eqref{auxeq:comp:rdnydensity}, and from the corollary's statement, it is assumed that $\gamma=0$, this means that \eqref{eq:uniq:eiswelldefined} now holds without any restriction on $p$.

We remark as well that for any $0\leq t\leq s\leq T$
\begin{align*}
&\tilde{e}_s \tilde{e}_t^{-1}=\exp\Big\{\int_t^s \tilde\Gamma_r \ud r\Big\}\leq A_T
:=\exp\Big\{\int_0^T K(1+4R^{k-1}|Z^1_s|) \uds\Big\}.
\end{align*}
Since $Z^1\in\cHBMO$, the random variable $A_T$ is integrable in view of \eqref{eq:BMOproperties}. To shorten the notation, we define $\cE_T:=\cE(\tilde{b}*W)_T$ as the density of the probability measure $\widetilde{\IQ}$. We have then
\begin{align*}
\delta Y_t
&
\leq \IE^{\widetilde{\IQ}}\Big[ \tilde{e}_T (\tilde{e}_t)^{-1} |\delta \xi| + (\tilde{e}_t)^{-1} \int_t^T\tilde{e}_s  \, |\delta f_s| \uds  \,\Big|\cF_ t\Big]\\
&
\leq \IE^{\widetilde{\IQ}}\Big[ A_T\Big(|\delta \xi| + \int_0^T  \, |\delta f_s| \uds  \Big)\,\Big|\cF_ t\Big].
\end{align*}
From the above inequality, it follows that 
\begin{align*}
\delta Y_t
&\leq \cE_t^{-1}\IE^{\IP}\Big[\, \cE_T\, A_T\, \Big(|\delta \xi| + \int_0^T  \, |\delta f_s| \uds  \Big)\,\Big|\cF_ t\Big]
\\
&
\leq \cE_t^{-1}\IE^{\IP}\big[\, \cE_T^{r'}\big]^{\frac1{r'}}\,\IE^{\IP}\Big[A_T^r\, \Big(|\delta \xi| + \int_0^T  \, |\delta f_s| \uds  \Big)^r\,\Big|\cF_ t \Big]^{\frac1r}
\\
&
\leq C_{r'} \IE^{\IP}\Big[A_T^r\, \Big(|\delta \xi| + \int_0^T  \, |\delta f_s| \uds  \Big)^r\,\Big|\cF_ t \Big]^{\frac1r},
\end{align*}
where the last lines follow from $\tilde{b}*W\in BMO$ combined with \eqref{eq:reverseholderineq:trick} and H\"older's inequality (with $1/r+1/{r'}=1$) for $r'>1$ with $\cE_T\in L^{r'}(\IP)$.

Let now $p\geq 1$ and apply Doob's inequality (and concave Jensen's inequality) to conclude that
\begin{align*}
\IE[\sup_{t\in[0,T]}|\delta Y_t|^p] 
&
\leq C^p_{r'} C_p \IE\Big[A_T^{pr} \Big( |\delta\xi| + \int_0^T |\delta f_s|\uds \Big)^{pr}\Big]^{\frac1r}
\\
&
\leq C^p_{r'} C_p C_{pr',q'}\IE\Big[\Big( |\delta\xi| + \int_0^T |\delta f_s|\uds \Big)^{prq}\Big]^{\frac1{qr}},
\end{align*}
where we applied once again H\"older's inequality with exponents $1/q+1/q'=1$ and version \eqref{eq:BMOproperties} of \eqref{eq:johnnirenberg:ineq} for $q'>1$ with $A_T^{pr}\in L^{q'}(\IP)$.

{\sf Step 2 -  The estimate for $Z^1-Z^2$:} We follow the notation introduced above. The estimate here is obtained using standard BSDE techniques, so we shorten the proof a little. We first remark that 
\begin{align*}
|f_1(\cdot,Y^1_\cdot,Z^1_\cdot)-f_1(\cdot,Y^2_\cdot,Z^2_\cdot)|\leq L^y_\cdot |\delta Y_\cdot|+L^z_\cdot|\delta Z_\cdot|,
\end{align*}
where, using \eqref{H2lip} with $\gamma=0$ and that $\|Y^i\|_\cS^\infty<R$ for $i\in\{1,2\}$,
\begin{align*}
L^y_\cdot :=K(1+2R^{k-1}(|Z^1_\cdot|+|Z^2_\cdot|)
\ \text{ and } \ 
L^z_\cdot :=K(1+2R^{k}+|Z^1_\cdot|+|Z^2_\cdot|).
\end{align*}
Applying It\^o's formula applied to $|\delta Y_t|^2$ over the interval $[0,T]$, using the ``Lipschitz'' assumptions of $f$, Young's inequality (with parameter $2$) and observe that with $\delta f$ defined as $\delta f_\cdot=f_1(\cdot,Y^2_\cdot,Z^2_\cdot) - f_2(\cdot,Y^2_\cdot,Z^2_\cdot)$, we obtain 
\begin{align*}
& |\delta Y_0|^2 +\int_0^T 2 \delta Y_s \delta Z_s\udws
\\
&\qquad 
= 
 |\delta \xi|^2 + \int_0^T \big[\, 2\delta Y_s \big(f_1(s,Y^1_s,Z^1_s) - f_2(s,Y^2_s,Z^2_s)\big)
 - |\delta Z_s|^2 \big]\uds
\\
&\qquad  
 \leq 
 |\delta \xi|^2 + \int_0^T \big[\, 2(L^y_s+(L^z_s)^2)|\delta Y_s|^2 + 2|\delta Y_s|\,|\delta f_s|  - \frac12|\delta Z_s|^2 \big]\uds
\\
& \Rightarrow  
\frac12 \| \delta Z \|_{\cH^{2p}}^{2p} \leq C \Big\{ \|\delta Y\|_{\cS^{2p}}^{2p} + 
\|\delta Y\|_{\cS^{4p}}^{2p}\IE\Big[\Big(\int_0^T [L^y_s +(L^z_s)^2]\uds\Big)^{2p}\Big]^{\frac12}
\\
& \hspace{1.5cm} 
\quad + 
\|\delta Y\|_{\cS^{2p}}^{p} \IE\Big[\Big(\int_0^T |\delta f_s|\uds\Big)^{2p}\Big]^{\frac12}
 + \varepsilon \|\delta Y\|_{\cS^{4p}}^{2p} +\frac1\varepsilon  \|\delta Z \|_{\cH^{2p}}^{2p}
\Big\},
\end{align*}
where the last line follows from: reordering the terms, taking absolute values, exponentiating to power $p$ and taking expectation, then use It\^o's Isometry, the fact that $\delta Y\in \cS^\infty$ and Young's inequality for some $\varepsilon>0$. Since $Z^i\in \cHBMO$ for $i\in\{1,2\}$ the processes $L^y, L^z\in\cHBMO$, now with the appropriate choice of $\varepsilon$, estimate \eqref{eq:aprioriestimateofdeltaZ} follows.
\end{proof}

\subsection{A remark on Theorems \ref{maintheo:ch3:existence} and \ref{maintheo:ch3:uniqueness}}
\label{subsec:particularcase}

Although we managed to show that a solution to BSDE \eqref{general-BSDE} exists under Assumption \ref{assump:H1} for any $\gamma\geq 0$, we were only able to show a comparison result when $\gamma$ was small enough (see \eqref{eq:uniq:eiswelldefined}). This is due to a limitation of the mathematical tools available, namely that a small $\gamma$ ensured that the $\Gamma$'s in \eqref{auxeq:uniq:exp} and \eqref{auxeq:comp:exp} had the required integrability properties. 

But there are situations where one is able to show uniqueness for any value of $\gamma$. We next discuss about the possibility of linearizing a driver, e.g. $f(y,z)=g(y) |z|^2$ with a convenient $g$ function, through an invertible transformation. The idea is similar to the usual quadratic driver case, $f(z)=|z|^2/2$ and the transformation $P_t=\exp Y_t$.

\subsubsection*{A linearization transformation}
Linearization for standard quadratic BSDE is well understood:  through the transformation $P_t=\exp\{\gamma Y_t\}$ and $Q_t=Z_tP_t$, the BSDE
\[
Y_t=\xi+\int_t^T \frac{\gamma}2|Z_s|^2\uds-\int_t^T Z_s\udws
\]
becomes
\[
P_t=e^{\gamma \xi}-\int_t^T Q_s\udws \ \ \Rightarrow \ \ 
Y_t=\frac1\gamma\log \IE[ e^{\gamma \xi} |\cF_t].
\]
It is possible to use the same type of arguments and find linearizing transformations for BSDE with drivers of the form $f(y,z)=g(y)|z|^2$ for an integrable function $g$, by solving the ODE\footnote{Apply It\^o's formula to $\Phi(Y_t)$ and organize the terms inside the Lebesgue integral in a convenient way. Note that we already know that $Y\in\cS^\infty$ and can easily verify that $\int \Phi'(Y)Z\ud W$ is a true martingale.}
\[
g(y)\Phi'(y)-\frac12\Phi''(y)=0,\quad \Phi(0)=0,\ \Phi'(0)=1.
\]
The above ODE is solved by
\[
\Phi'(y)=\exp\Big\{\int_0^y 2g(x)\ud x \Big\}
\quad\text{and}\quad
\Phi(y)= \int_0^y \Phi'(x)\ud x.
\]
Since $g$ is integrable by assumption, we have $\Phi'(y)\neq 0$ for any $y\in\IR$ so that $\Phi$ is invertible and hence the unique solution of the transformed BSDE yields the unique solution of the original BSDE.

We also point out that this type of transformation works well in the one dimensional setting. In the multidimensional case one quickly finds problems, see the Remark \ref{remark:multidimcase} below.

\section{FBSDEs and PDEs}
\label{section:FBSDEandPDE}
In this section we discuss the framework of Markovian FBSDE and their connection to PDEs. The results here open way to the later results on PDE perturbation and large deviations. 

Here we particularize the previous results to the framework of \emph{decoupled} Forward-Backward SDE (FBSDE), where we assume that the randomness of the terminal condition and driver have origin in a diffusion process. Namely, we define an FBSDE, with solution $(X,Y,Z)$, as the following system of stochastic equations: let $m\in\IN$ and take $(t,x)\in [0,T] \times \IR^m$ and for $s\in[t,T]$
\begin{align}
\label{FBSDE:SDE}
X^{t,x}_s&= x + \int_t^s b(r,X^{t,x}_r)\ud r+\int_t^s \sigma(r,X^{t,x}_r)\udw_r,
\\
\label{FBSDE:BSDE}
Y^{t,x}_s&= g(X_T^{t,x}) + \int_s^T f(r,X_r^{t,x},Y_r^{t,x},Z_r^{t,x})\ud r-\int_s^T Z_r^{t,x}\udw_r.
\end{align}
The involved functions $b$, $\sigma$, $g$ and $f$ satisfy:
\begin{assump}
\label{assump:H0:SDE}
$b:[0,T]\times\IR^m\to \IR^m$ and $\sigma:[0,T]\times \IR^m\to \IR^{m\times d}$ are continuous functions for which there exists a constant $K>0$ such that  $\sup_{t\in[0,T]}\{\,|b(t,0)|+|\sigma(t,0)|\,\}\leq K$. Moreover $b$ and $\sigma$ satisfy standard Lipschitz conditions in the spatial variables.
\end{assump}
\begin{assump}
\label{assump:H1:SDEBSDE}
$g:\IR^m\to\IR$ and $f:[0,T]\times \IR^m \times \IR\times \IR^d\to \IR$ are continuous functions. $g$ satisfies a standard Lipschitz condition and is uniformly bounded by a constant $M$. Let $k\in\IN$ and a positive constant $K$ such that for all $(t,x,y,z)\in[0,T] \times \IR^m \times \IR \times \IR^d$ 
\begin{align}
\label{assump:FBSDE:growth}
&|f(t,x,y,z)|\leq K\big( 1+|y|+(1+ |y|^k)|z| +|z|^2\big).
\end{align}
Let $t\in[0,T]$, $x,x'\in\IR^m$, $y,y'\in\IR$ and $z,z'\in\IR^d$ it holds that
\begin{align}
\label{assump:FBSDE:lip}
&|f(t,x,y,z)-f(t,x',y',z')|
\\
\nonumber
&\qquad
\leq  K \big\{ 
\big(1+|y|+|y'|+|y|^k|z|+|y'|^k|z'|+|z|^2+|z'|^2\big)\big|x-x'\big| 
\\ 
\nonumber
&\hspace{2.4cm}
+\big(1+(|y|^{k-1}+|y'|^{k-1})(|z|+|z'|)\big)\big|y-y'\big|
\\
\nonumber
&
\hspace{3.2cm} +\big(1+|y|^k+|y'|^k+(|z|+|z'|)\big)\big|z-z'\big| \big\}.
\end{align}
\end{assump}

\subsection{Particularizing the main results - Decoupled FBSDE}

We now state a particularization of the main theorems in the previous section to the framework of FBSDE.
\begin{prop}
\label{theo:FBSDEexistenceanduniqueness}
Take $(t,x)\in[0,T]\times \IR^m$ and let Assumptions \ref{assump:H0:SDE} and \ref{assump:H1:SDEBSDE} hold. Then there exists a unique triple $(X^{t,x},Y^{t,x},Z^{t,x}) \in \cS^p \times \cS^\infty \times \cHBMO$ for any $p\geq 2$ that solves FBSDE \eqref{FBSDE:SDE}-\eqref{FBSDE:BSDE}. Moreover,
\begin{align}
\label{eq:normunibounded}
\sup_{(t,x)\in[0,T]\times \IR^m} \|Y^{t,x}\|_{\cS^\infty}+\|Z^{t,x}\|_{\cHBMO} <\infty.
\end{align}
Furthermore, $Y^{t,x}$ and $Z^{t,x}$ are Markovian: there exist two Borel-measurable functions $u:[0,T]\times \IR^m\to \IR$ and $v:[0,T]\times\IR^m\to\IR^d$ such that 
\[
Y^{t,x}_s=u(s,X^{t,x}_s),\ s\in[t,T],\ \IP\text{-a.s} \qquad Z^{t,x}_s=v(s,X^{t,x}_s),\quad\ \ud s\otimes \IP\text{-a.s.}
\]
\end{prop}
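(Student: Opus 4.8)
The plan is to reduce everything to the abstract results of Section~3, the forward component being classical. First I would solve the forward equation \eqref{FBSDE:SDE}: under Assumption~\ref{assump:H0:SDE} the coefficients $b,\sigma$ are Lipschitz in the spatial variable and of linear growth (the latter because $|b(t,0)|+|\sigma(t,0)|\le K$), so the usual fixed-point argument gives a unique strong solution $X^{t,x}\in\cS^p$ for every $p\ge 2$; moreover $(t,x)\mapsto X^{t,x}$ is continuous into every $\cS^p$, and the flow identity $X^{t,x}_r=X^{s,X^{t,x}_s}_r$ holds $\IP$-a.s.\ for $t\le s\le r\le T$.

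Next I would freeze the forward trajectory and read \eqref{FBSDE:BSDE} as a BSDE of the form \eqref{general-BSDE} with data $\hat\xi:=g(X^{t,x}_T)$ and $\hat f(\omega,r,y,z):=f(r,X^{t,x}_r(\omega),y,z)$, and check the abstract hypotheses: $\hat\xi$ is $\cF_T$-measurable with $\|\hat\xi\|_{L^\infty}\le M$ because $|g|\le M$, so Assumption~\ref{assump:H0} holds; $\hat f$ is continuous and $\cF$-predictable since $X^{t,x}$ is continuous and adapted and $f$ is jointly continuous; and \eqref{assump:FBSDE:growth} is precisely \eqref{H2growth} with $\gamma=0$, while setting $x=x'$ in \eqref{assump:FBSDE:lip} yields precisely \eqref{H2lip} with $\gamma=0$ (same $K$ and $k$), so Assumptions~\ref{assump:H1} and \ref{assump:H2} hold. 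The point worth stressing is that $\gamma=0$ makes the smallness requirement of Theorem~\ref{maintheo:ch3:uniqueness}, namely $(2q^{*}K8\gamma R^{k-1})^{1/2}R^{\cZ}_\gamma<1$, vacuous; hence Theorems~\ref{maintheo:ch3:existence} and \ref{maintheo:ch3:uniqueness} give a unique $(Y^{t,x},Z^{t,x})\in\cS^\infty\times\cHBMO$ solving \eqref{FBSDE:BSDE}, and together with $X^{t,x}$ the unique triple in $\cS^p\times\cS^\infty\times\cHBMO$.

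For the uniform bound \eqref{eq:normunibounded} I would keep track of the constants in those proofs. Lemma~\ref{lemma:exist:uniq:ofYnZn} (equivalently the $\gamma$-free bound in Theorem~\ref{maintheo:ch3:existence}) gives $\|Y^{t,x}\|_{\cS^\infty}\le\e^{KT}(M+KT)=:R_0$, which depends only on $K,M,T$. Plugging $\|Y^{t,x}\|_{\cS^\infty}\le R_0$ into \eqref{assump:FBSDE:growth} and using $(1+|y|^k)|z|\le(1+R_0^k)(\tfrac12+\tfrac12|z|^2)$ puts $\hat f$ into the form $A+F|z|^2$ with $A,F$ depending only on $K,M,T$; the estimate in the proof of Lemma~\ref{lemma:ZisBMO} then bounds $\|Z^{t,x}*W\|_{BMO}$ by a constant depending only on $R_0$, $M$, $A$, $F$, hence only on $K,M,T$. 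Taking the supremum over $(t,x)$ yields \eqref{eq:normunibounded}.

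The last and most delicate part is the Markov representation. I would set $u(t,x):=Y^{t,x}_t$ and note that it is a deterministic number: $Y^{t,x}_t$ is $\cF_t$-measurable, while the data of \eqref{FBSDE:SDE}--\eqref{FBSDE:BSDE} on $[t,T]$ depends on $\omega$ only through $(W_r-W_t)_{r\ge t}$, so the solution on $[t,T]$ is adapted to the shifted Brownian filtration and hence $Y^{t,x}_t$ is also independent of $\cF_t$. The flow identity shows that, restricted to $[s,T]$, the pair $(Y^{t,x},Z^{t,x})$ solves the same BSDE as $(Y^{s,X^{t,x}_s},Z^{s,X^{t,x}_s})$ with random initial point $X^{t,x}_s$; uniqueness (Theorem~\ref{maintheo:ch3:uniqueness}) then forces $Y^{t,x}_s=u(s,X^{t,x}_s)$ and $Z^{t,x}_r=Z^{s,X^{t,x}_s}_r$ for a.e.\ $r\in[s,T]$, $\IP$-a.s., after which a Lebesgue-point / measurable-selection argument produces a Borel $v$ with $Z^{t,x}_s=v(s,X^{t,x}_s)$. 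The real obstacle is the Borel measurability of $u$ (and $v$): I would obtain it from the a priori estimate of Corollary~\ref{maincoro:ch3:aprioriestimate} (legitimate precisely because $\gamma=0$), which, combined with the Lipschitz property of $g$, the $x$-modulus in \eqref{assump:FBSDE:lip}, the uniform $\cS^\infty$ and $\cHBMO$ bounds just obtained, the exponential BMO moments \eqref{eq:BMOproperties} needed to absorb the $|z|^2$ term appearing in $\delta f$, and the $\cS^p$-continuity of $(t,x)\mapsto X^{t,x}$, shows that $(t,x)\mapsto(Y^{t,x},Z^{t,x})$ is continuous into $\cS^p\times\cH^p$; in particular $u$ is continuous, hence Borel. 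The subtle identification of $v$ (since $Z$ lives only $\ud r\otimes\ud\IP$-a.e.) and the need to run these stability estimates inside BMO rather than in a plain $L^2$ framework are where the bookkeeping is heaviest.
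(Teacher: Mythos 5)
Your proposal is correct and follows essentially the same route as the paper: standard SDE theory for the forward component, then the Section~3 results applied with the forward trajectory frozen, uniform bounds by tracking that the constants in the proof of Theorem~\ref{maintheo:ch3:existence} depend only on $K$, $M$, $T$, and the flow-plus-uniqueness argument (which the paper simply cites from Lemma 4.1 of \cite{ElKarouiPengQuenez1997}) for the Markov representation. Your explicit observation that Assumption~\ref{assump:H1:SDEBSDE} corresponds to $\gamma=0$, so the smallness condition of Theorem~\ref{maintheo:ch3:uniqueness} is vacuous, is exactly the point the paper's terse proof leaves implicit.
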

\begin{proof}
Existence and uniqueness follow from standard SDE results and the results of the previous section. 

Let $(t,x)\in[0,T]\times \IR^m$. The uniform bounds of the $\cS^\infty$- and $\cHBMO$-norms, with relation to $t$ and $x$, follow from the uniform bounds of the growth assumption in the driver $f$ and the arguments used to prove Theorem \ref{maintheo:ch3:existence}. 

The constant $K$ appearing in the domination \eqref{assump:FBSDE:growth} is independent of $x$ and $t$ and it is shown in the proof of Theorem \ref{maintheo:ch3:existence} that the uniform bounds for the norms of $Y$ and $Z$ depend only on the constants $K$ and $M$ appearing in Assumptions \ref{assump:H0} and \ref{assump:H1}, which are essentially the same constants appearing in Assumption \ref{assump:FBSDE:growth}.

The Markov property of $(Y,Z)$ is also trivial. It follows from arguments similar to those in Lemma 4.1 in \cite{ElKarouiPengQuenez1997} or Theorem 4.1.1 in \cite{dosreis2011}.
\end{proof}
The next result states the continuous dependence of the solution in the Euclidean parameter $x$.
\begin{coro}
\label{coro:conituityofYinx}
Take $t\in[0,T]$ and $x,x'\in \IR^m$ and let Assumptions \ref{assump:H0:SDE} and \ref{assump:H1:SDEBSDE} hold. Then for any $p\geq 2$ there exists a constant $C>0$ (independent of $x$ and $x'$) such that
\begin{align*}
\IE[\sup_{s\in [t,T]} |Y_s^{t,x}-Y_s^{t,x'}|^p ]
+
\IE\Big[\Big( \int_0^T |Z^{t,x}_s-Z^{t,x'}_s|^2\uds \Big)^\frac{p}2\Big]
\leq C|x-x'|^p
\end{align*}
For $0\leq t\leq s\leq T$ the  mapping $\IR^m  \ni x\mapsto Y^{t,x}_s$ admits a continuous modification where almost all sample paths are $\alpha$-H\"older continuous in $\IR^m$ for any  $\alpha\in(0,1)$.  For $(t,x)\in[0,T]\times\IR^m$ the mapping $s\mapsto Y_s^{t,x}(\omega)$ is continuous for $\IP$-a.s. $\omega\in\Omega$.
\end{coro}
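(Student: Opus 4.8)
The plan is to prove Corollary~\ref{coro:conituityofYinx} as a direct application of the a priori estimate in Corollary~\ref{maincoro:ch3:aprioriestimate}, combined with standard SDE moment estimates and a Kolmogorov-type continuity criterion. First I would set $Y^1=Y^{t,x}$, $Y^2=Y^{t,x'}$, $Z^1=Z^{t,x}$, $Z^2=Z^{t,x'}$ and apply the machinery of the previous section with the same driver $f$ for both systems. The terminal conditions are $\xi^1=g(X_T^{t,x})$ and $\xi^2=g(X_T^{t,x'})$, and the role of $\delta f_s$ in Corollary~\ref{maincoro:ch3:aprioriestimate} is played by $f(s,X_s^{t,x},Y_s^{t,x},Z_s^{t,x})-f(s,X_s^{t,x'},Y_s^{t,x},Z_s^{t,x})$, i.e.\ the increment of $f$ coming only from the spatial variable. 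By the Lipschitz estimate \eqref{assump:FBSDE:lip} this is bounded by
\[
|\delta f_s|\leq K\big(1+|Y_s^{t,x}|+(|Y_s^{t,x}|^k)|Z_s^{t,x}|+|Z_s^{t,x}|^2\big)\,|X_s^{t,x}-X_s^{t,x'}|,
\]
so that, using $\|Y^{t,x}\|_{\cS^\infty}\leq R$ uniformly in $(t,x)$ and $Z^{t,x}*W\in BMO$ with uniformly bounded norm (both from Proposition~\ref{theo:FBSDEexistenceanduniqueness}), the processes multiplying $|X_s^{t,x}-X_s^{t,x'}|$ have all exponential moments by \eqref{eq:BMOproperties}. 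Note we are in the $\gamma=0$ case required by Corollary~\ref{maincoro:ch3:aprioriestimate}, which matches the form of \eqref{assump:FBSDE:growth} (no $\gamma|y|^k|z|^2$ term).

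Next I would invoke the classical SDE estimate: under Assumption~\ref{assump:H0:SDE} one has $\IE[\sup_{s\in[t,T]}|X_s^{t,x}-X_s^{t,x'}|^q]\leq C_q|x-x'|^q$ for every $q\geq 2$, with $C_q$ independent of $x,x'$ and $t$. Plugging $|\delta\xi|\leq K|X_T^{t,x}-X_T^{t,x'}|$ and the bound on $|\delta f_s|$ into the first inequality of Corollary~\ref{maincoro:ch3:aprioriestimate}, then applying H\"older's inequality to split the product of $|X_s^{t,x}-X_s^{t,x'}|$ against the (exponentially integrable) coefficient process, yields $\|Y^{t,x}-Y^{t,x'}\|_{\cS^p}^p\leq C\,\IE[\sup_{s}|X_s^{t,x}-X_s^{t,x'}|^{prq\cdot q''}]^{1/(qrq'')}\leq C|x-x'|^p$ for a constant $C$ depending only on $p$, $T$, the constants in the assumptions, and the uniform BMO bounds — but not on $x,x'$. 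The estimate for $\|Z^{t,x}-Z^{t,x'}\|_{\cH^p}$ then follows from the second inequality of Corollary~\ref{maincoro:ch3:aprioriestimate} by feeding in the just-obtained bounds on $\|\delta Y\|_{\cS^{p}}$, $\|\delta Y\|_{\cS^{2p}}$ and the $L^p$-norm of $\int_0^T|\delta f_s|\,\uds$ (again controlled via H\"older and the SDE estimate). This establishes the displayed moment bound.

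Finally, the H\"older-continuity of $x\mapsto Y_s^{t,x}$ follows from the Kolmogorov--Chentsov continuity theorem: the bound $\IE[|Y_s^{t,x}-Y_s^{t,x'}|^p]\leq C|x-x'|^p$ holds for \emph{every} $p\geq 2$, so for any $\alpha\in(0,1)$ one picks $p>m/(1-\alpha)$ to get a modification whose sample paths are $\alpha$-H\"older on $\IR^m$. The path-continuity in $s$ of $s\mapsto Y_s^{t,x}(\omega)$ is already part of the existence statement (the $Y$-component of a BSDE solution has continuous paths, as recorded in Proposition~\ref{theo:2.3Koby2000} and carried through Proposition~\ref{theo:FBSDEexistenceanduniqueness}).

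The main obstacle is purely bookkeeping rather than conceptual: one must verify that all the H\"older exponents $r,q,q',q''$ and associated constants produced when one unwinds Corollary~\ref{maincoro:ch3:aprioriestimate} stay uniform in $x$ and $x'$. This is fine because they depend only on the uniform BMO bounds from \eqref{eq:normunibounded}, but the chain of H\"older inequalities needs to be set up so that the exponential-moment bound \eqref{eq:BMOproperties} applies to the right power of each coefficient process; a careless choice of conjugate exponents would leave a residual $|z|^2$ inside an exponential, which is not integrable. Choosing each such exponent just large enough while keeping the overall product of powers acting on $|X^{t,x}-X^{t,x'}|$ finite is the one place where care is needed.
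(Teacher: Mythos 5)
Your proposal follows essentially the same route as the paper: apply Corollary~\ref{maincoro:ch3:aprioriestimate} with $\delta f$ equal to the $x$-increment of the driver, bound it via \eqref{assump:FBSDE:lip} by $\sup_s|X^{t,x}_s-X^{t,x'}_s|$ times $\int_0^T[1+|Z^{t,x}_s|+|Z^{t,x}_s|^2]\,\uds$, split by H\"older, use the classical SDE estimate and the uniform bounds \eqref{eq:normunibounded}, and conclude the H\"older continuity in $x$ by Kolmogorov's criterion. One small correction: the coefficient process does \emph{not} have exponential moments via \eqref{eq:BMOproperties} (that inequality only covers $\int_0^T|Z_s|^\varepsilon\,\uds$ for $\varepsilon<2$, and $\exp\{\int_0^T|Z_s|^2\,\uds\}$ is in general not integrable for BMO $Z$); what the H\"older splitting actually requires, and what holds, is that $\int_0^T|Z_s|^2\,\uds$ has all \emph{polynomial} moments, which follows from the John--Nirenberg consequence in part 3 of Lemma~\ref{lemma:bmoproperties}.
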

\begin{proof} For simplicity we take $t=0$. The bounds \eqref{eq:normunibounded} play a crucial role because we can now apply the results of Lemma \ref{maincoro:ch3:aprioriestimate} where the involved constants $C$ are independent of the parameters $x$ and $x'$. Then, for $p\geq 2$ we have (recall that $\|Y^x\|_{\cS^\infty}$ is uniformly bounded in $x$)
\begin{align*}
&
\|Y^x-Y^{x'}\|_{\cS^p}^p
\\
&
\quad \leq  C\,\IE\Big[\,\Big(\, \big|g(X^{x}_T)-g(X_T^{x'})\big|
\\
&\hspace{1.8cm}  
+ \int_0^T |f(s,X^x,Y^{x},Z^x)-f(s,X^{x'},Y^x,Z^x)|\uds \Big)^{prq}\big]^{\frac1{qr}}
\\
&
\quad \leq  C\,\big\{ \|X^{x}-X^{x'} \|_{\cS^{pqr}}^{pqr} + 
\\
& \hspace{1.8cm} 
+ \IE\Big[\Big(\sup_{t\in[0,T]}|X^{x}_t-X^{x'}_t|\int_0^T [1+|Z_s^x|+|Z_s^x|^2\,]\uds \Big)^{prq}\Big]^{\frac1{qr}}.
\end{align*}
Using $|z|\leq 1+|z|^2$ along with the fact that $\|Z^x\|_{\cHBMO}$ is uniformly bounded in $x$, the result follows from standard results of SDE\footnote{See for instance Theorem 1.2.5 of \cite{dosreis2011}}, namely that under Assumption  \ref{assump:H0:SDE} we have for any $x,x'\in\IR^m$ and $p\geq 2$ that
\[
\|X^x-X^{x'}\|_{\cS^p}^p=\IE[\sup_{t\in [0,T]} |X_t^x-X_t^{x'}|^p]\leq C_p\,|x-x'|^p,
\]
with $C_p$ a constant independent of $x$ and $x'$.
The continuity of the map in $t$ and $x$ follows from the properties of the solution of the BSDE and Kolmogorov's continuity criterion for $p$ large enough (see Theorem 2 in \cite{Schilling2000}). So we can deduce the result concerning $\|Z^x-Z^{x'}\|_{\cH^{p}}^p$ from \eqref{eq:aprioriestimateofdeltaZ}.
\end{proof}
The previous result hints that it is perhaps possible to make sense of the variation of $Y^x$, i.e.~that $x\mapsto Y^x$ is differentiable (in some sense). 

Using a particular form of the results in \cite{BriandConfortola2008} we can conclude the existence of all partial derivatives (in $x\in\IR^m$) of $(Y^x,Z^x)$. What is then left to argue is the existence of the total derivative of $Y$. We will work under the following assumption
\begin{assump}
\label{assump:H1:SDEBSDE-differentiability}
Let Assumptions \ref{assump:H0:SDE} and \ref{assump:H1:SDEBSDE} hold. $b$, $\sigma$ and $g$ have continuous, uniformly bounded first order spatial derivatives. $f$ is differentiable with continuous derivatives satisfying for some $K>0$ for all $(t,x,y,z)\in[0,T] \times \IR^m \times \IR \times \IR^d$
\begin{align*}
|\nabla_x f(t,x,y,z)|&\leq K(1+|y|+|y|^k|z|+|z|^2),
\\
|\nabla_y f(t,x,y,z)|&\leq K(1+|y|^{k-1}|z|),
\\
|\nabla_z f(t,x,y,z)|&\leq K(1+|y|^k+|z|).
\end{align*}
\end{assump}
Let $t\in[0,T]$, $x\in\IR^m$, 
$\Theta^x=(X^x,Y^x,Z^x)$ be the solution of \eqref{FBSDE:SDE}-\eqref{FBSDE:BSDE}, and define, at least formally, the dynamics\footnote{For $m\in\IN$, $I_m$ represents the $m\times m$-dimensional identity matrix.}
\begin{align}
\label{FBSDE:partialdifferentiatedSDE}
\nabla_x X^x_t&=I_d+\int_0^t \nabla_x b(s,X^x_s)\nabla_x X^x_s\uds
+\int_0^t \nabla_x \sigma(s,X_s^x)\nabla_x X^x_s\udws,
\\
\label{FBSDE:partialdifferentiatedFBSDE}
\nabla_x Y_t^x &= (\nabla_x g)(X^x_T) \nabla_x X^x_T
-\int_t^T \nabla_x Z^x_s\udws
\\
&
\nonumber
\hspace{2cm}
+ \int_t^T \big\langle 
(\nabla f)(s,\Theta_s^x), 
(\nabla_x X_s^x, \nabla_x Y_s^x, \nabla_x Z_s^x)
\big\rangle
\uds.
\end{align}
\begin{prop}
\label{prop:differentiabilitylemma}
Let $x\in \IR^m$, $p\geq 2$ and let Assumption \ref{assump:H1:SDEBSDE-differentiability} hold. Then the mapping $\IR^m \to \cS^p(\IR^m) \times \cS^p(\IR) \times \cH^p(\IR^d)$, $x\mapsto (X^x,Y^x,Z^x)$ is differentiable in the norm topology and the derivative is the unique solution of FBSDE \eqref{FBSDE:partialdifferentiatedSDE}-\eqref{FBSDE:partialdifferentiatedFBSDE} in $\cS^p\times\cS^p\times \cH^p$ for any $p\geq 2$.

In particular, for $x',x\in \IR^m$ we have
\[
\lim_{x'\to x}\big\{ \| \nabla_x Y^{t,x}-\nabla_x Y^{t,x'}\|_{\cS^p}^p 
+
\| \nabla_x Z^{t,x}-\nabla_x Z^{t,x'}\|_{\cH^p}^p 
\big\}=0. 
\]
\end{prop}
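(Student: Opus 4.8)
Since the existence of all partial derivatives $\nabla_{x_j}(Y^x,Z^x)$ is to be taken from a suitable version of \cite{BriandConfortola2008}, and since differentiability of the flow $x\mapsto X^x$ with $\nabla_x X^x$ solving \eqref{FBSDE:partialdifferentiatedSDE} is classical, the plan has three steps: (i) show that the linearized (variational) FBSDE \eqref{FBSDE:partialdifferentiatedSDE}--\eqref{FBSDE:partialdifferentiatedFBSDE} is well posed in $\cS^p\times\cS^p\times\cH^p$ for every $p\geq2$ and that its unique solution is precisely the collection of partial derivatives; (ii) prove that $x\mapsto\nabla_x\Theta^x=(\nabla_x X^x,\nabla_x Y^x,\nabla_x Z^x)$ is continuous in the norm topology, which is exactly the displayed limit; (iii) upgrade from continuous partial (directional) derivatives to Fréchet differentiability by the classical Banach-space criterion.

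For step (i), the forward equation \eqref{FBSDE:partialdifferentiatedSDE} is a linear SDE with bounded measurable coefficients (Assumption \ref{assump:H1:SDEBSDE-differentiability}), uniquely solvable in every $\cS^p$ with all moments. Freezing $\Theta^x=(X^x,Y^x,Z^x)$, equation \eqref{FBSDE:partialdifferentiatedFBSDE} becomes a linear BSDE for $(\nabla_x Y^x,\nabla_x Z^x)$ whose coefficients I would control using the derivative bounds of Assumption \ref{assump:H1:SDEBSDE-differentiability} together with $Y^x\in\cS^\infty$, $Z^x\in\cHBMO$: the coefficient of $\nabla_x Y$ is $(\nabla_y f)(\cdot,\Theta^x)$, dominated by $C(1+|Z^x|)$, so $\int_0^T|(\nabla_y f)(s,\Theta^x_s)|\uds$ is exponentially integrable of all orders by \eqref{eq:BMOproperties} (applied with $\varepsilon=1$); the coefficient of $\nabla_x Z$ is $(\nabla_z f)(\cdot,\Theta^x)$, also dominated by $C(1+|Z^x|)$, so $(\nabla_z f)(\cdot,\Theta^x)*W\in BMO$; and the inhomogeneous term $(\nabla_x f)(\cdot,\Theta^x)\nabla_x X^x$ is dominated by $C(1+|Z^x|^2)|\nabla_x X^x|$, hence lies in every $L^p$ because $\int_0^T|Z^x|^2\uds$ has all moments (point 3 of Lemma \ref{lemma:bmoproperties}) and $\nabla_x X^x\in\cS^p$; finally the terminal datum $(\nabla g)(X^x_T)\nabla_x X^x_T$ is in every $L^p$ since $\nabla g$ is bounded. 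I would then solve and estimate this linear BSDE exactly by the mechanism of the proof of Theorem \ref{maintheo:ch3:uniqueness}: pass to the measure with density $\cE\big((\nabla_z f)(\cdot,\Theta^x)*W\big)_T$ (reverse-H\"older by Lemma \ref{lemma:bmoproperties}), multiply by the weight $\exp\{\int_0^\cdot(\nabla_y f)(s,\Theta^x_s)\uds\}$ (in every $L^p$ by the above), represent $\nabla_x Y^x$ as a conditional expectation and close with H\"older; note that, unlike in Theorem \ref{maintheo:ch3:uniqueness}, no smallness hypothesis enters, since the generator here carries no $|z|^2$-term that has to be dominated. Passing to the limit $h\to0$ in the linear BSDE solved by the difference quotient $h^{-1}(Y^{x+h e_j}-Y^x,\,Z^{x+h e_j}-Z^x)$ --- with the help of Corollary \ref{coro:conituityofYinx} and the a priori estimate just obtained --- identifies the $j$-th partial derivative with the $j$-th column of the solution of \eqref{FBSDE:partialdifferentiatedSDE}--\eqref{FBSDE:partialdifferentiatedFBSDE}.

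For step (ii), fix $x$ and let $x'\to x$. The difference $(\nabla_x Y^{x'}-\nabla_x Y^x,\,\nabla_x Z^{x'}-\nabla_x Z^x)$ solves a linear BSDE of the same structure, with coefficients frozen at $\Theta^{x'}$, terminal datum $(\nabla g)(X^{x'}_T)(\nabla_x X^{x'}_T-\nabla_x X^x_T)+\big((\nabla g)(X^{x'}_T)-(\nabla g)(X^x_T)\big)\nabla_x X^x_T$, and inhomogeneous term built from $\langle(\nabla f)(s,\Theta^{x'}_s),(\nabla_x X^{x'}_s-\nabla_x X^x_s,0,0)\rangle$ and $\langle(\nabla f)(s,\Theta^{x'}_s)-(\nabla f)(s,\Theta^x_s),(\nabla_x X^x_s,\nabla_x Y^x_s,\nabla_x Z^x_s)\rangle$. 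The a priori estimate from step (i) --- uniform for $x'$ near $x$ thanks to \eqref{eq:normunibounded} --- bounds the $\cS^p+\cH^p$-norm of the difference by $L^p$-norms of these data. The terms involving $\nabla_x X^{x'}-\nabla_x X^x$ vanish as $x'\to x$ by stability of the linear forward variational equation (using $X^{x'}\to X^x$ in $\cS^p$ from Corollary \ref{coro:conituityofYinx}, continuity and boundedness of $\nabla_x b,\nabla_x\sigma$, and dominated convergence), and the terms involving $(\nabla f)(\cdot,\Theta^{x'})-(\nabla f)(\cdot,\Theta^x)$ or $(\nabla g)(X^{x'}_T)-(\nabla g)(X^x_T)$ vanish by continuity of $\nabla f,\nabla g$, the convergence $\Theta^{x'}\to\Theta^x$ in $\cS^p\times\cS^p\times\cH^p$ (Corollary \ref{coro:conituityofYinx}), the growth bounds of Assumption \ref{assump:H1:SDEBSDE-differentiability}, and dominated convergence, the integrable dominating functions coming from the uniform $\cS^\infty$- and $\cHBMO$-bounds \eqref{eq:normunibounded} combined with point 3 of Lemma \ref{lemma:bmoproperties}. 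This is the asserted limit; the analogous and easier statement for $\nabla_x X$ is classical.

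For step (iii), once each directional derivative $\nabla_{x_j}\Theta^x$ exists and $x\mapsto\nabla_{x_j}\Theta^x$ is continuous into $\cS^p\times\cS^p\times\cH^p$, the classical fact that a map with continuous partial (directional) derivatives is Fréchet differentiable gives differentiability of $x\mapsto\Theta^x$ in the norm topology, with derivative $\nabla_x\Theta^x$; concretely the remainder $\Theta^{x'}-\Theta^x-\nabla_x\Theta^x(x'-x)=\int_0^1\big[\nabla_x\Theta^{\,x+t(x'-x)}-\nabla_x\Theta^x\big](x'-x)\,\udt$ has norm at most $|x'-x|\sup_{t\in[0,1]}\|\nabla_x\Theta^{\,x+t(x'-x)}-\nabla_x\Theta^x\|$, which is $o(|x'-x|)$ by step (ii); uniqueness of the derivative among solutions of \eqref{FBSDE:partialdifferentiatedSDE}--\eqref{FBSDE:partialdifferentiatedFBSDE} is part of step (i). The hard part, throughout steps (i)--(ii), is the integrability bookkeeping: because $\nabla_x f$ grows quadratically in $z$ one must lean on $Z^x\in\cHBMO$ --- so that $\int_0^T|Z^x|^2\uds$ has finite moments of every order and $\int_0^T|Z^x|^\varepsilon\uds$ is exponentially integrable for $\varepsilon<2$, by point 3 of Lemma \ref{lemma:bmoproperties} --- and on the fact that the linearized BSDE is only \emph{linear} in $Z^x$, which is what keeps the Girsanov density reverse-H\"older and the exponential weight in every $L^p$, thereby legitimizing the conditional-expectation representation and the H\"older estimates uniformly for $x'$ in a neighbourhood of $x$. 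A lesser technical nuisance is the stability of the linear forward variational equation under merely continuous (not Lipschitz) $\nabla_x b,\nabla_x\sigma$, which I would handle by dominated convergence along $X^{x'}\to X^x$.
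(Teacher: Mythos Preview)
Your proposal is correct and follows essentially the same three-step architecture as the paper's proof: solve the linearized BSDE \eqref{FBSDE:partialdifferentiatedFBSDE} (the paper delegates this to \cite{BriandConfortola2008} whereas you sketch the BMO measure-change argument directly), identify it with the limit of difference quotients, and then prove continuity of $x\mapsto\nabla_x\Theta^x$ by writing down the difference BSDE and applying the a priori estimate plus dominated convergence. The only minor organisational differences are that you freeze the linear coefficients at $\Theta^{x'}$ where the paper freezes at $\Theta^x$ (immaterial), and that you make explicit the classical ``continuous partials $\Rightarrow$ Fr\'echet'' upgrade which the paper leaves implicit in the opening sentence of its Step 3.
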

\begin{remark}
As in Theorem 2.2 of \cite{AnkirchnerImkellerReis2007} (or Section 3.1 of \cite{dosreis2011}) it is possible to show that there exists a function $\Omega\times[t,T]\times\IR^m\to\IR$ mapping $(\omega,s,x)\mapsto Y^{t,x}_s$ such that for almost all $\omega$, $Y^{t,x}_s$ is continuously differentiable in $x$, i.e. understanding $x\mapsto Y^{t,x}_s(\omega)$ as a function in $C^1(\IR^m,\IR)$!

For such a result one would need to ask that $\nabla g$ satisfies a Lipschitz condition as well a Lipschitz like condition for the derivatives of $f$. Since the argumentation for such a result follows from a straightforward adaptation of the proofs in \cite{AnkirchnerImkellerReis2007} or \cite{dosreis2011}, we refrain from giving the details. 
\end{remark}

The proof of Proposition \ref{prop:differentiabilitylemma} follows a methodology rather standard for this type setting and so it is given in the appendix.

\subsection{Connection to PDEs}

In the classical theory of FBSDE one is able to connect the solution of FBSDE \eqref{FBSDE:SDE}-\eqref{FBSDE:BSDE} to the solution of a certain PDE. Formally, for $0\leq t\leq s\leq T,\ x\in \IR^m$ the FBSDE \eqref{FBSDE:SDE}-\eqref{FBSDE:BSDE}  
 is related to the PDE: 
\begin{align}
\label{eq:thePDE}
\partial_t u(t,x) + \cL u(t,x) + f\big(t, x, u(t,x), (\nabla_x u \,  \sigma)(t,x)\big)&=0,
\\
\nonumber
u(T,x) &= g(x),
\end{align}
with the second order differential operator $\cL$ being the infinitesimal semi-group generator of the Markov process $(X_s^{t,x})_{s\in[t,T]}$ given by
\[
\cL := \sum_{i=1}^{m}b_i(t,x){\partial_{x_i}} + \frac12 \sum_{i,j=1}^m (\sigma\sigma^T)_{i,j}(t,x)  {\partial_{x_i x_j}}.
\]
The relation between the FBSDE and the PDE is given by the identities:
\begin{align}
\label{eq:FBSDEPDErepresentation}
Y^{t,x}_s=u(s,X^{t,x}_s)
\ \ \text{ and }\ \
Z^{t,x}_s=(\nabla_x u\,\sigma)(s,X^{t,x}_s),
\end{align}
where the second relation holds provided the derivatives of $u$ are appropriately defined. Indeed, under Assumptions \ref{assump:H0:SDE} and \ref{assump:H1:SDEBSDE}, we get from  Corollary \ref{coro:conituityofYinx} that 
\begin{align}
\label{eq:uislipschitzinx}
|u(t,x)-u(t,x')|^p=|Y^{t,x}_t-Y^{t,x'}_t|^p\leq C|x-x'|^p,
\end{align}
and hence $u(t,x)$ is a Lipschitz function in its spatial variables. With the results from Proposition \ref{prop:differentiabilitylemma} we have also that $\nabla_x u$ exists and is continuous (in the spatial variable).
\begin{remark}
Particularizing the forward diffusion by assuming $b=0$,  $\sigma=\sqrt{2}$ and $f(t,y,z)=\nu y z$, one obtains the known Burgers' PDE.
\end{remark}
\begin{remark}[Multidimensional case of $f(y,z)=y|z|^2$]
\label{remark:multidimcase}
Related to a particular case of the Theorem \ref{maintheo:ch3:existence}, we make a small remark concerning the PDE representation of such BSDE (when paired with a SDE) in \emph{higher} dimensions. There exists a phenomenon of finite-time gradient blow-up in PDEs: \cite{ChangDingYe1992} (also presented as Theorem III.6.14 in \cite{Struwe2008}) consider mappings $u$ from the closed unit disk in $\IR^{2}$ into the unit sphere in $\IR^{3}$ which satisfy
\begin{align}
\label{blowup}
\partial_t u = \triangle u + |\nabla_x u|^{2}u, \quad u(0,x) = u_{0}(x), \quad 
u(t,\cdot)\big|_{\partial D^{2}}  = u_{0}\big|_{\partial D^{2}}. \end{align}
They  show that for some smooth and bounded boundary condition $u_{0}$, the solution of (\ref{blowup}) blows up in finite time, i.e.,  the maximal existence interval $[0,T)$ has a finite $T$.

Moreover, in \cite{BertschDalPassovanderHout2002} for the same PDE
\begin{align*}
& \partial_t u = \triangle u + |\nabla_x u|^{2}u \text{ in } \cQ:= \Omega\times\IR^+\\
& u=u_ 0 \text{ on } \partial \Omega\times \IR^+ \quad \text{and }\quad 
 u(0,x) = u_{0}(x) \text{ for }x\in\Omega,
\end{align*}
with $\Omega$ a bounded domain in $\IR^n$ with smooth boundary, the authors show that $u_0\in C^\infty$ exists such that an infinite number of solutions to the PDE exist.
\end{remark}

\subsection{Viscosity solution}

As far as the relation between PDE and FBSDE go, the natural counterpart of the FBSDE solution is a PDE solution in the viscosity sense via \eqref{eq:FBSDEPDErepresentation}. Observe that we have not mentioned in any way an ellipticity assumption for $\sigma$, which is consistent with viscosity theory. Although we have a unique solution for the FBSDE, the FBSDE is only to provide existence for the PDE while uniqueness of the PDE has to be shown via a comparison result for PDE. It is unclear whether such a result holds in this framework that allows for cross term nonlinearities like $yz$ since this type of nonlinearities implies that the Hamiltonian arising from \eqref{eq:thePDE} is not a \emph{proper} function.  Nonetheless, such a comparison result exists in the probabilistic framework (see our Theorem \ref{maintheo:ch3:comparison}). 

We do not give many details below, but as expected, one is able to prove that the $Y$ component of the solution to the FBSDE \eqref{FBSDE:SDE}-\eqref{FBSDE:BSDE} is indeed a viscosity solution to PDE \eqref{eq:thePDE}. 
\begin{theo}
\label{theo:proofofviscosity}
Take $(t,x)\in[0,T]\times \IR^m$ and let  Assumptions \ref{assump:H0:SDE} and \ref{assump:H1:SDEBSDE} hold. Then the mapping $u:[0,T]\times \IR^m \to \IR$ defined by $u(t,x):=Y^{t,x}_t$ is a uniformly bounded continuous function of its temporal and spatial variables and a viscosity solution of \eqref{eq:thePDE}.
\end{theo}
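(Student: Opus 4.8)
The plan is to verify the two defining properties of a viscosity solution: that $u(t,x):=Y^{t,x}_t$ is (a) well-defined, bounded and continuous, and (b) simultaneously a viscosity sub- and supersolution of \eqref{eq:thePDE}. For the continuity and boundedness, I would invoke the work already done: boundedness of $u$ is immediate from $\|\xi\|_{L^\infty}\le M$ together with the uniform $\cS^\infty$-bound on $Y^{t,x}$ in Proposition \ref{theo:FBSDEexistenceanduniqueness}; Lipschitz continuity in $x$ is exactly \eqref{eq:uislipschitzinx} from Corollary \ref{coro:conituityofYinx}; and continuity in $t$ follows from the flow property $Y^{t,x}_t = Y^{s,X^{t,x}_s}_s$ for $t\le s$ combined with the continuity of $s\mapsto Y^{t,x}_s$ and standard moment estimates on $X^{t,x}$, so that $|u(t,x)-u(t',x)|$ can be controlled by $\IE[|Y^{t',x}_{t'}-Y^{t',X^{t,x}_{t'}}_{t'}|]$ plus $\IE[|Y^{t,x}_{t}-Y^{t,x}_{t'}|]$, and both go to zero as $t'\to t$ using the Lipschitz bound in $x$ and path continuity of $Y$ and $X$.

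The core of the proof is the viscosity sub/supersolution property, and here I would follow the now-classical argument of \cite{ElKarouiPengQuenez1997} (or the presentation in \cite{dosreis2011}): fix $(t_0,x_0)$ and a test function $\varphi\in C^{1,2}$ touching $u$ from above at $(t_0,x_0)$ (for the subsolution inequality), and argue by contradiction. If the subsolution inequality fails, i.e. $\partial_t\varphi(t_0,x_0)+\cL\varphi(t_0,x_0)+f(t_0,x_0,u(t_0,x_0),(\nabla_x\varphi\,\sigma)(t_0,x_0))<0$, then by continuity this strict inequality persists on a small parabolic neighborhood. One then compares, on a short time interval $[t_0,t_0+h]$ with $X^{t_0,x_0}$ stopped at the exit time of that neighborhood, the solution $Y$ of the BSDE with terminal data $u$ at the stopping time against the solution $\bar Y$ of the BSDE with terminal data $\varphi$ at the stopping time and driver frozen accordingly; applying Itô to $\varphi(s,X^{t_0,x_0}_s)$ turns $\bar Y$ into an explicit expression, and the strict PDE inequality forces $\bar Y_{t_0}<\varphi(t_0,x_0)=u(t_0,x_0)$. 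But the comparison theorem (our Theorem \ref{maintheo:ch3:comparison}, applicable since $u\le\varphi$ on the neighborhood boundary provides the terminal-condition ordering, and $\gamma=0$ here by \eqref{assump:FBSDE:growth} so the smallness hypothesis is vacuous) yields $Y_{t_0}\le\bar Y_{t_0}<u(t_0,x_0)=Y^{t_0,x_0}_{t_0}$, contradicting $Y_{t_0}=u(t_0,x_0)$ via the dynamic programming/flow identity. The supersolution inequality is symmetric.

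The main obstacle I anticipate is making the localization-and-comparison step fully rigorous in our generalized setting: our driver $f$ depends on $x$ and only satisfies the growth/Lipschitz bounds \eqref{assump:FBSDE:growth}–\eqref{assump:FBSDE:lip}, so when I freeze the spatial argument at $x_0$ or substitute $(\nabla_x\varphi\,\sigma)(s,X_s)$ for $Z$, I must check that the resulting comparison BSDE still falls within Assumptions \ref{assump:H0}–\ref{assump:H2} with $\gamma=0$ — in particular that $(\nabla_x\varphi\,\sigma)(s,X^{t_0,x_0}_s)$ is bounded on the stopped interval (true, since $X$ is stopped in a bounded set and $\sigma,\nabla_x\varphi$ are continuous), so the $|z|^2$-term is under control and Theorem \ref{maintheo:ch3:comparison} genuinely applies. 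A secondary point is that the representation $Z^{t,x}_s=(\nabla_x u\,\sigma)(s,X^{t,x}_s)$ is only available once differentiability of $u$ is known, so in the viscosity argument I should avoid using it and instead work purely with the BSDE comparison and Itô's formula on the test function, which is exactly why the test-function formulation is used. Given these checks, the remaining estimates are routine, and I would defer the fully detailed bookkeeping to the cited references, noting only the modifications needed to accommodate the $(1+|y|^k)|z|$ cross term — which, being lower order in $z$ and handled by the BMO machinery of Lemma \ref{lemma:bmoproperties}, causes no new difficulty.
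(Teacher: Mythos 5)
Your proposal is correct and follows essentially the same route as the paper: boundedness and continuity from Proposition \ref{theo:FBSDEexistenceanduniqueness} and Corollary \ref{coro:conituityofYinx}, then the classical localization-and-contradiction argument with a $C^{1,2}$ test function, It\^o's formula, and a comparison step, with the same key observation that the stopped test-function BSDE has bounded data so the quadratic term is controlled. The only cosmetic difference is that the paper does not invoke Theorem \ref{maintheo:ch3:comparison} verbatim (since the $(\widehat Y,\widehat Z)$ equation is not literally of the assumed form) but instead reruns its linearization argument on the difference $\widetilde Y-\widehat Y$ --- precisely the verification you flag as the "main obstacle."
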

The proof of this theorem is the same as that of Theorem 3.2 in \cite{Pardoux1999} except for the application of the  comparison theorem in the last step of the proof (where we use our Theorem \ref{maintheo:ch3:comparison}). We postpone the proof to the appendix.

\section{PDE Perturbation and large deviations}

In this section we look at the relation between FBSDE and PDE perturbation. In a first approach, the form of the perturbation is quite amenable to a in-depth analysis including the possibility of showing a large deviations principle for the solution of the FBSDE as the perturbation vanishes. The second step looks at another setting of PDE perturbation and only partial results, within the framework of the previous sections, are obtained. We then stretch the theory to recover some other known results on a certain perturbed transport PDE and the Navier-Stokes equations for incompressible fluid flow.

In this section we establish a large deviations principle for the  FBSDE \eqref{FBSDE:SDE}, \eqref{FBSDE:BSDE}. This type of result has its relevance in the theory of perturbation of PDEs. We discuss two types of perturbations, while the first leads to classical results (and relates to the theory of $\varepsilon$-vanishing viscosity solutions) the second shows the limitations of the techniques used.

\subsection{Canonical perturbation in FBSDE framework}

Throughout let $x\in\IR^m$, $0\leq t\leq s\leq T$ and $\varepsilon\in(0,1]$. Moreover, we denote by $C$ a constant that may change from line to line but it is \emph{always independent} of the parameters $\varepsilon$, $x$ or $t$. 

Throughout we work with Assumptions \ref{assump:H0:SDE} and \ref{assump:H1:SDEBSDE} and from the context it is clear that the constants involved in the growth bounds are \emph{independent} of $\varepsilon$! At a later stage we will discuss the situation $|f(y,z)|\leq K|y|\, |z|/\varepsilon$.

We work with the family of equations indexed in $\varepsilon$ and following the dynamics
\begin{align}
\label{eq:perturbedSDE}
X_s^{t,x,\varepsilon}=x+\int_t^s b(r,X_r^{t,x,\varepsilon})\ud r + \sqrt\varepsilon \int_t^s \sigma(r, X_r^{t,x,\varepsilon})\udw_r.
\end{align}
It is known that as $\varepsilon$ vanishes, the solution $X^{t,x,\varepsilon}$ converges to $X^{t,x,0}$ (see Lemma \ref{lemma:propertiesofperturbedSDE} below), which solves the deterministic equation
\begin{align}
\label{eq:deterministicSDE}
X^{t,x,0}_s=x+\int_t^s b(r,X^{t,x,0}_r)\ud r.
\end{align}
Moreover, under some extra assumptions, that the law induced by $X^{t,x,\varepsilon}$ satisfies a large deviations principle (LDP) (see Definition \ref{def:LDPprinciple} below).

We want to show that the FBSDE with dynamics
\begin{align}
\label{eq:perturbedBSDE}
Y_s^{t,x,\varepsilon}=g(X_T^{t,x,\varepsilon})
-\int_s^T Z_r^{t,x,\varepsilon}\udw_r
+\int_s^T f(r, X_r^{t,x,\varepsilon}, Y_r^{t,x,\varepsilon}, Z_r^{t,x,\varepsilon})\ud r,
\end{align}
converges to the deterministic backward equation
\begin{align}
\label{eq:deterministicBSDE}
Y_s^{t,x,0}=g(X_T^{t,x,0})
+\int_s^T f(r, X_r^{t,x,0}, Y_r^{t,x,0}, 0)\ud r,
\end{align}
and that the law induced by $Y^{t,x,\varepsilon}$ satisfies an LDP as well. 

We start with a small result on the properties of $X^{t,x,\varepsilon}$.
\begin{lemma}
\label{lemma:propertiesofperturbedSDE}
Under Assumption \ref{assump:H0:SDE} and for any $\varepsilon\in(0,1]$ equations \eqref{eq:perturbedSDE} and \eqref{eq:deterministicSDE} have unique solutions, $X^{t,x,\varepsilon}\in\cS^p$ for any $p\geq 2$ and $X^{t,x,0}\in C^0([t,T]\times\IR^m,\IR^m)$. Moreover, for any $p\geq 2$, for some $C_p>0$ it holds that
\[
\sup_{\varepsilon\in(0,1]}\|X^{t,x,\varepsilon}\|_{\cS^p}^p\leq C_p(1+|x|^p)\ \text{ and } \ 
\|X^{t,x,\varepsilon}-X^{t,x,0}\|_{\cS^p}^p\leq C_p (1+|x|^p)\varepsilon^p.
\]
Furthermore, $\lim_{\varepsilon\to 0} X^{t,x,\varepsilon}=X^{t,x,0}$ a.s.
\end{lemma}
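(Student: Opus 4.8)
The plan is to treat this as three standard SDE facts, each of which follows from Assumption~\ref{assump:H0:SDE} (linear growth of $b,\sigma$ via $\sup_{t}\{|b(t,0)|+|\sigma(t,0)|\}\leq K$ together with spatial Lipschitz continuity) and classical estimates for It\^o SDEs. First I would recall that \eqref{eq:perturbedSDE} has a bounded drift and diffusion coefficient after composing with the Lipschitz maps, so existence and uniqueness of a strong solution $X^{t,x,\varepsilon}\in\cS^p$ for every $p\geq 2$ is the usual Picard iteration / contraction argument; the only $\varepsilon$-dependence is the harmless factor $\sqrt\varepsilon\leq 1$ in front of the diffusion, so all constants can be taken uniform in $\varepsilon\in(0,1]$. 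For \eqref{eq:deterministicSDE} one simply notes that $b(\cdot,X^{t,x,0}_\cdot)$ is a Lipschitz ODE (Picard--Lindel\"of), giving a unique solution that depends continuously on $(t,x)$, hence $X^{t,x,0}\in C^0([t,T]\times\IR^m,\IR^m)$.

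Next I would establish the two moment bounds. For $\sup_{\varepsilon\in(0,1]}\|X^{t,x,\varepsilon}\|_{\cS^p}^p\leq C_p(1+|x|^p)$, apply It\^o's formula to $|X^{t,x,\varepsilon}_s|^p$ (or bound $\sup_{s}|X_s|^p$ directly), use the linear growth $|b(r,y)|+|\sigma(r,y)|\leq K(1+|y|)$ that follows from Assumption~\ref{assump:H0:SDE}, estimate the stochastic term by the Burkholder--Davis--Gundy inequality, and close with Gr\"onwall's lemma; since $\varepsilon\leq 1$ the diffusion contribution is dominated by its $\varepsilon=1$ version, so $C_p$ is $\varepsilon$-free. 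For the difference estimate, set $\Delta_s=X^{t,x,\varepsilon}_s-X^{t,x,0}_s$; then
\begin{align*}
\Delta_s=\int_t^s\big(b(r,X^{t,x,\varepsilon}_r)-b(r,X^{t,x,0}_r)\big)\ud r+\sqrt\varepsilon\int_t^s\sigma(r,X^{t,x,\varepsilon}_r)\udw_r,
\end{align*}
so using the spatial Lipschitz property on the drift term, BDG plus the already-proved $\cS^p$ bound on $X^{t,x,\varepsilon}$ for the martingale term (which carries the prefactor $\varepsilon^{p/2}$, hence also $\varepsilon^p$ after noting $\varepsilon\le 1$ and the linear growth $\int_t^T|\sigma(r,X_r^{t,x,\varepsilon})|^2\ud r\le C(1+|x|^2)$ — actually this produces $\varepsilon^{p/2}$; to obtain the stated $\varepsilon^p$ one applies Gr\"onwall and reconciles the exponent, which I address below), and Gr\"onwall's inequality, one gets $\|\Delta\|_{\cS^p}^p\leq C_p(1+|x|^p)\varepsilon^{p}$ (one should in fact read the estimate with $\varepsilon^{p/2}$ if that is what the constants give; the paper's stated power can be recovered by bounding $\varepsilon^{p/2}$ trivially only when $\ge$, so I would follow whichever exponent the clean computation yields and note $\varepsilon\le 1$).

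Finally, for the almost-sure convergence $\lim_{\varepsilon\to 0}X^{t,x,\varepsilon}=X^{t,x,0}$, I would restrict to a sequence $\varepsilon_n\downarrow 0$, invoke the $\cS^p$-rate just proved together with Chebyshev's inequality to get $\IP[\|X^{t,x,\varepsilon_n}-X^{t,x,0}\|_{\cS^\infty}>n^{-1}]\leq C n^{p}\varepsilon_n^{p/2}$, choose $\varepsilon_n$ decaying fast enough (e.g. $\varepsilon_n\le n^{-3}$) so the series converges, and apply Borel--Cantelli; a monotonicity / continuity-in-$\varepsilon$ remark then upgrades this to the full limit as $\varepsilon\to 0$ (alternatively one simply states a.s.\ convergence along any sequence, which is all that is needed later). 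The main obstacle here is essentially bookkeeping: making sure every constant is genuinely independent of $\varepsilon$, $x$ and $t$, and keeping the exponent of $\varepsilon$ consistent with the statement — none of the individual steps is deep, as they are textbook SDE estimates, so I would keep the write-up brief and cite a standard reference (e.g. Theorem~1.2.5 of \cite{dosreis2011}) for the core a~priori bounds.
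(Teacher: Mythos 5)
Your proposal follows exactly the route the paper itself relies on: the paper's ``proof'' of Lemma~\ref{lemma:propertiesofperturbedSDE} is a one-line citation to \cite{DemboZeitouni1993}, so writing out Picard iteration, the BDG--Gr\"onwall moment bound, the difference estimate and Borel--Cantelli is precisely the intended argument. Two points deserve to be settled rather than left hanging. First, your hesitation about the exponent is well founded and should be resolved against the printed statement: the clean computation gives $\|X^{t,x,\varepsilon}-X^{t,x,0}\|_{\cS^p}^p\le C_p(1+|x|^p)\,\varepsilon^{p/2}$, and this \emph{cannot} be upgraded to the stated $\varepsilon^{p}$, since for $\varepsilon\le 1$ one has $\varepsilon^{p/2}\ge\varepsilon^{p}$, so the printed bound is the strictly stronger one. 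The constant-coefficient case $b=0$, $\sigma\equiv I$ already shows $\varepsilon^{p/2}$ is sharp: there $X^{t,x,\varepsilon}_s-X^{t,x,0}_s=\sqrt{\varepsilon}\,(W_s-W_t)$, whose $\cS^p$-norm raised to the power $p$ equals a constant times $\varepsilon^{p/2}$. So the lemma as stated overstates the rate; the correct exponent is $p/2$ (equivalently, rate $\sqrt{\varepsilon}$ in $\cS^p$), which still suffices for every downstream use (Proposition~\ref{prop:convofYepsilontoY} and the uniform convergence of $F^\varepsilon$ in Theorem~\ref{theo:LDPforY} only need a rate tending to zero). Second, Borel--Cantelli gives a.s.\ convergence only along sequences $\varepsilon_n\downarrow 0$, and there is no monotonicity in $\varepsilon$ to invoke; if one insists on the continuum limit, the correct tool is a Kolmogorov continuity argument in the parameter $\sqrt{\varepsilon}\in[0,1]$, based on the estimate $\IE[\sup_{s}|X^{t,x,\varepsilon}_s-X^{t,x,\varepsilon'}_s|^p]\le C_p(1+|x|^p)\,|\sqrt{\varepsilon}-\sqrt{\varepsilon'}|^p$ (split the stochastic integral as $(\sqrt{\varepsilon}-\sqrt{\varepsilon'})\sigma(\cdot,X^{\varepsilon})+\sqrt{\varepsilon'}(\sigma(\cdot,X^{\varepsilon})-\sigma(\cdot,X^{\varepsilon'}))$ and apply Gr\"onwall); otherwise simply state the sequential version, which is all that is used later.
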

\begin{proof}
These results are quite standard from either SDE or ODE theory and follow from the good properties of the $b$ and $\sigma$ functions. See e.g. \cite{DemboZeitouni1993}.
\end{proof} 
We next establish properties of  $(Y^{t,x,\varepsilon},Z^{t,x,\varepsilon})$ and $Y^{t,x,0}$ .
\begin{prop}
Let Assumptions \ref{assump:H0:SDE} and \ref{assump:H1:SDEBSDE} hold. Then, a unique solution $(X^{t,x,\varepsilon},Y^{t,x,\varepsilon}, Z^{t,x,\varepsilon})$ of FBSDE \eqref{eq:perturbedSDE},   
\eqref{eq:perturbedBSDE} in  $\cS^p\times \cS^\infty\times \cHBMO$ for any $p\geq 2$ exists. Moreover,
\[
\sup_{\varepsilon \in (0,1]}\ \sup_{(t,x)\in[0,T]\times\IR^m}\  \big\{ 
\|Y^{t,x,\varepsilon}\|_{\cS^\infty} 
+ \|Z^{t,x,\varepsilon}\|_{\cHBMO}
\big\}
<\infty.
\]
The mapping $(t,x)\mapsto Y^{t,x,\varepsilon}$ has a continuous modification.

Additionally, the ODE \eqref{eq:deterministicBSDE} has a unique solution $Y^{t,x,0}\in C^0_b([t,T]\times\IR^m,\IR)$. Moreover, there exists a constant $C>0$ such that
\begin{align}
\label{eq:uzeroislipschitzinx}
|Y^{t,x,0}_t-Y^{t,x',0}_t|\leq C|x-x'|,\qquad t\in[0,T],\ x,x'\in\IR^m.
\end{align}
\end{prop}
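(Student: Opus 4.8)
The plan is to deduce the three FBSDE statements from the results of Section~\ref{section:FBSDEandPDE} and to treat the limiting equation by elementary ODE theory. First I would observe that \eqref{eq:perturbedSDE} is nothing but the forward equation \eqref{FBSDE:SDE} with diffusion coefficient $\sqrt{\varepsilon}\,\sigma$ in place of $\sigma$; since $\sqrt{\varepsilon}\leq 1$ for $\varepsilon\in(0,1]$, the pair $(b,\sqrt{\varepsilon}\,\sigma)$ still satisfies Assumption~\ref{assump:H0:SDE} with the same constant, and the driver $f$ is unchanged so Assumption~\ref{assump:H1:SDEBSDE} holds. Hence Lemma~\ref{lemma:propertiesofperturbedSDE} (for the $\cS^p$-part of $X$) and Proposition~\ref{theo:FBSDEexistenceanduniqueness} apply directly to \eqref{eq:perturbedSDE}, \eqref{eq:perturbedBSDE} and give, for each fixed $\varepsilon$, the unique solution $(X^{t,x,\varepsilon},Y^{t,x,\varepsilon},Z^{t,x,\varepsilon})\in\cS^p\times\cS^\infty\times\cHBMO$ ($p\geq 2$) together with the Markov representation $Y^{t,x,\varepsilon}_s=u^\varepsilon(s,X^{t,x,\varepsilon}_s)$. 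The continuous modification of $(t,x)\mapsto Y^{t,x,\varepsilon}$ then follows, for this $\varepsilon$, from the a priori estimates of Corollary~\ref{coro:conituityofYinx} together with the flow property of the SDE and Kolmogorov's continuity criterion.

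The uniformity in $\varepsilon$ is the point that needs care. I would track the proofs of Theorem~\ref{maintheo:ch3:existence} (that is, of Lemma~\ref{lemma:exist:uniq:ofYnZn}) and of Lemma~\ref{lemma:ZisBMO}, noting that the only data entering the bounds there are the growth constant $K$ and the bound $M$ on $|g|$, neither of which involves $\sigma$: the coefficient of the quadratic term $|z|^2$ in \eqref{assump:FBSDE:growth} is exactly $K$, and once one knows $\|Y^{t,x,\varepsilon}\|_{\cS^\infty}\leq e^{KT}(M+KT)$ (obtained by the change-of-measure argument of Lemma~\ref{lemma:exist:uniq:ofYnZn}, which never uses the forward equation) the resulting bound on $\|Z^{t,x,\varepsilon}*W\|_{BMO}$ depends only on $K$, $M$, $T$. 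Since none of these depend on $\varepsilon$, $t$ or $x$, the claimed supremum is finite.

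For the limiting problem I would first recall from Lemma~\ref{lemma:propertiesofperturbedSDE} that \eqref{eq:deterministicSDE} has a unique solution $X^{t,x,0}\in C^0([t,T]\times\IR^m,\IR^m)$; moreover Gronwall's lemma applied to the Lipschitz drift $b$ gives $|X^{t,x,0}_r-X^{t,x',0}_r|\leq e^{LT}|x-x'|$ uniformly in $t\leq r\leq T$, where $L$ is the Lipschitz constant of $b$. Setting $z=z'=0$ in \eqref{assump:FBSDE:growth} and \eqref{assump:FBSDE:lip} shows that $y\mapsto f(r,X^{t,x,0}_r,y,0)$ has linear growth and is globally Lipschitz in $y$ with constant $K$; therefore the backward ODE \eqref{eq:deterministicBSDE} with terminal value $g(X^{t,x,0}_T)$, $|g|\leq M$, has a unique solution by a standard Picard iteration argument, and Gronwall gives $\|Y^{t,x,0}\|_\infty\leq e^{KT}(M+KT)$ uniformly in $(t,x)$. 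Continuity of $(t,x)\mapsto Y^{t,x,0}$ then follows from continuous dependence of ODE solutions on parameters together with the continuity of $(t,x)\mapsto X^{t,x,0}$.

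Finally, for the Lipschitz bound \eqref{eq:uzeroislipschitzinx} I would write the equation for $\delta Y_s:=Y^{t,x,0}_s-Y^{t,x',0}_s$, add and subtract $f(r,X^{t,x',0}_r,Y^{t,x,0}_r,0)$ to split the driver increment into an $x$-part and a $y$-part, bound the $x$-part by the $x$-Lipschitz estimate of \eqref{assump:FBSDE:lip} (taken with $z=z'=0$, the prefactor $K(1+|Y^{t,x,0}_r|+|Y^{t,x',0}_r|)$ being bounded by a constant thanks to the uniform bound on $\|Y^{t,x,0}\|_\infty$) times $|X^{t,x,0}_r-X^{t,x',0}_r|\leq e^{LT}|x-x'|$, bound the $y$-part by $K|\delta Y_r|$, add the terminal increment $|g(X^{t,x,0}_T)-g(X^{t,x',0}_T)|\leq \mathrm{Lip}(g)\,e^{LT}|x-x'|$, and conclude by Gronwall's lemma run backward from $T$; evaluating at $s=t$ gives the claim. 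None of these steps is difficult; the only real obstacle is bookkeeping, namely checking at each stage that every constant produced is genuinely independent of $\varepsilon$, $t$ and $x$, which is precisely what the subsequent large-deviations arguments will need.
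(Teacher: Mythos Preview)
Your proposal is correct and follows essentially the same route as the paper: apply Proposition~\ref{theo:FBSDEexistenceanduniqueness} (equivalently Theorems~\ref{maintheo:ch3:existence}--\ref{maintheo:ch3:uniqueness}) with $\sqrt{\varepsilon}\,\sigma$ in place of $\sigma$, observe that the bounds from Lemma~\ref{lemma:exist:uniq:ofYnZn} and Lemma~\ref{lemma:ZisBMO} depend only on $K$, $M$, $T$ and hence are uniform in $\varepsilon$, invoke Corollary~\ref{coro:conituityofYinx} for continuity, and treat the limiting equation by Picard--Lindel\"of plus Gronwall exactly as the paper does. Your write-up is in fact more explicit than the paper's (it tracks the constants and spells out the splitting of the driver increment for \eqref{eq:uzeroislipschitzinx}), but the argument is the same.
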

\begin{proof}
Existence and uniqueness of a solution to \eqref{eq:perturbedBSDE} for $\varepsilon \in (0,1]$ follow from Theorems \ref{maintheo:ch3:existence} and \ref{maintheo:ch3:uniqueness}. An argumentation similar to that of the proof of \eqref{eq:normunibounded} yields the uniform bounds in $\varepsilon$. The continuity result follows from Corollary \ref{coro:conituityofYinx}.

Concerning $Y^{t,x,0}$ from \eqref{eq:deterministicBSDE}, we easily see that $g$ is still a uniformly bounded function and  $F(s,y)=f(s,X^{t,x,0}_s,y,0)$ for $(s,y)\in[t,T]\times \IR$ is an $\cF_s$-adapted continuous function (in $s$ and $y$) satisfying (simply use \eqref{assump:FBSDE:growth} and \eqref{assump:FBSDE:lip}) a linear growth and Lipschitz condition (in the $y$-variable). After a time reversal, standard ODE theory yields (e.g. Picard-Lindel\"of's theorem) the existence and uniqueness of a continuous solution $Y^{t,x,0}$ to \eqref{eq:deterministicBSDE}. One then obtains  that $Y^{t,x,0}$ is uniformly bounded. 

The estimate for the difference $(Y^{t,x,0}-Y^{t,x',0})$ follows easily from the same estimate for $(X^{t,x,0}-X^{t,x',0})$, Gronwall's inequality combined with the uniform boundedness of $Y^{t,x,0}$ and \eqref{assump:FBSDE:lip}.
\end{proof}
The next result states the convergence of $(Y^{t,x,\varepsilon},Z^{t,x,\varepsilon})$ to $(Y^{t,x,0},0)$ as $\varepsilon$ vanishes. Notice that we can always interpret the deterministic equation \eqref{eq:deterministicBSDE} as a FBSDE with solution $(Y^{t,x,0},0)$.
\begin{prop}
\label{prop:convofYepsilontoY}
Let Assumptions \ref{assump:H0:SDE} and \ref{assump:H1:SDEBSDE} hold. Then for any $p\geq 2$ there exists a constant $C_p>0$ such that
\begin{align*}
\sup_{(t,x)\in[0,T]\times \IR^m}\Big\{\,\IE\big[\sup_{t\leq s\leq T} |Y_s^{t,x,\varepsilon} - Y_s^{t,x,0}|^p\big] + \IE\Big[ \Big(\int_t^T|Z_r^{t,x,\varepsilon}|^2\ud r\Big)^p \Big]\,\Big\} \leq C_p\varepsilon^p.
\end{align*}
We emphasize that $C_p$ is independent of $t$, $x$ or $\varepsilon$.
\end{prop}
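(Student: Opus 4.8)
The plan is to estimate the difference $(\delta Y, \delta Z) := (Y^{t,x,\varepsilon}-Y^{t,x,0}, Z^{t,x,\varepsilon})$ by a linearization-and-change-of-measure argument of exactly the same flavour as the proofs of Theorem \ref{maintheo:ch3:uniqueness} and Corollary \ref{maincoro:ch3:aprioriestimate}, exploiting that all the BMO norms, the $\cS^\infty$ bounds for $Y^{t,x,\varepsilon}$ and the bound for $Y^{t,x,0}$ are uniform in $\varepsilon$, $t$, $x$ by the previous proposition. Write the BSDE for $\delta Y$: its terminal value is $g(X^{t,x,\varepsilon}_T) - g(X^{t,x,0}_T)$ and its driver difference splits as
\[
f(r,X^{\varepsilon}_r,Y^{\varepsilon}_r,Z^{\varepsilon}_r) - f(r,X^{0}_r,Y^{0}_r,0)
= \underbrace{\big(f(r,X^{\varepsilon}_r,Y^{\varepsilon}_r,Z^{\varepsilon}_r)-f(r,X^{\varepsilon}_r,Y^{0}_r,Z^{\varepsilon}_r)\big)}_{=\ \Gamma_r\,\delta Y_r}
+ \underbrace{\big(f(r,X^{\varepsilon}_r,Y^{0}_r,Z^{\varepsilon}_r)-f(r,X^{\varepsilon}_r,Y^{0}_r,0)\big)}_{=\ b_r\cdot\delta Z_r}
+ \delta f_r,
\]
where $\Gamma_r$ and $b_r$ are the usual linearization coefficients built from \eqref{assump:FBSDE:lip}, and $\delta f_r := f(r,X^{\varepsilon}_r,Y^{0}_r,0)-f(r,X^{0}_r,Y^{0}_r,0)$ is the ``frozen'' source term. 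Here $\gamma$ plays no role since the FBSDE growth condition \eqref{assump:FBSDE:growth} has coefficient $1$ on $|z|^2$, but more importantly the extra $|z|^2$-in-$y$ cross term in \eqref{H2lip} is absent in \eqref{assump:FBSDE:lip}, so $|\Gamma_r|\leq K(1+(|Y^{\varepsilon}_r|^{k-1}+|Y^{0}_r|^{k-1})|Z^{\varepsilon}_r|)\leq K(1+CR^{k-1}|Z^{\varepsilon}_r|)$ with $Z^{\varepsilon}*W$ of uniformly bounded BMO norm; consequently $\tilde e_T := \exp\{\int_t^T\Gamma_r\,\ud r\}$ and all its powers are uniformly $L^q$-integrable by \eqref{eq:BMOproperties}, and $\hat b*W\in BMO$ with uniform norm, so $\cE(\hat b*W)$ is uniformly reverse-Hölder by \eqref{eq:reverseholderineq:trick} — no smallness hypothesis is needed.

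**The key steps, in order.** First I would introduce $\Gamma$, $b$, $\tilde e$, and the measure $\widehat\IQ$ with density $\cE(b*W)_T$ exactly as in \eqref{auxeq:uniq:exp}--\eqref{auxeq:uniq:rdnydensity}, verify the uniform-in-$\varepsilon$ integrability of $\cE(b*W)_T$ and of powers of $\tilde e_T$ using Lemma \ref{lemma:bmoproperties} parts 2) and 3) together with the uniform bounds from the preceding proposition, and write the representation $\tilde e_t\,\delta Y_t = \IE^{\widehat\IQ}\big[\tilde e_T(g(X^{\varepsilon}_T)-g(X^{0}_T)) + \int_t^T \tilde e_r\,\delta f_r\,\ud r\,\big|\,\cF_t\big]$ (justifying that the $\udws^{\widehat\IQ}$-integral is a true martingale by the same Hölder split as in Step 1 of the proof of Theorem \ref{maintheo:ch3:uniqueness}). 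Second, since $g$ is Lipschitz I would bound $|g(X^{\varepsilon}_T)-g(X^{0}_T)|\leq K|X^{\varepsilon}_T-X^{0}_T|$, and since $f$ satisfies \eqref{assump:FBSDE:lip} in the $x$-variable I would bound $|\delta f_r|\leq K(1+|Y^{0}_r|+|Y^{0}_r|^k|Z^{\varepsilon}_r|... )\,$ — wait, more carefully $|\delta f_r|\leq K(1+2\|Y^0\|_\infty+\|Y^0\|_\infty^k\cdot 0+\|Y^0\|_\infty^k\cdot0+|Z^{\varepsilon}_r|^2+0)|X^{\varepsilon}_r-X^{0}_r|\leq C(1+|Z^{\varepsilon}_r|^2)\,\sup_{t\le r\le T}|X^{\varepsilon}_r-X^{0}_r|$ using that $Y^{0}$ is uniformly bounded and the second slot of $f$ in $\delta f_r$ carries $z=0$. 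Third, I would move to $\IP$ via the uniform reverse-Hölder bound, apply Hölder several times to separate the factors $\tilde e_T$, $\cE(b*W)_T$, $\int_t^T(1+|Z^{\varepsilon}_r|^2)\,\ud r$ and $\sup_r|X^{\varepsilon}_r-X^{0}_r|$, use \eqref{eq:BMOproperties} and point 3) of Lemma \ref{lemma:bmoproperties} to control the $Z^{\varepsilon}$-dependent factors by constants uniform in $\varepsilon,t,x$, and invoke Lemma \ref{lemma:propertiesofperturbedSDE} for $\|X^{\varepsilon}-X^{0}\|_{\cS^q}^q\leq C_q(1+|x|^q)\varepsilon^q$ — here I must be slightly careful because that bound carries a factor $(1+|x|^q)$, so I would instead use that under the Lipschitz assumptions $X^{\varepsilon}-X^{0}$ actually satisfies $\|X^{\varepsilon}-X^{0}\|_{\cS^q}\le C_q\varepsilon$ with constant independent of $x$ (this follows from Gronwall on the difference equation, whose source is only $\sqrt\varepsilon\int\sigma(r,X^{\varepsilon}_r)\udw_r$, and $\sigma$ is at most linear-growth but evaluated along the bounded-in-$\cS^q$ process $X^{\varepsilon}$, absorbing the $|x|$ into a term that Gronwall controls by the constant times $\varepsilon$; alternatively one reads off $x$-independence from the structure, as the problem statement asserts). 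Then Doob's maximal inequality applied to $t\mapsto \tilde e_t\,\delta Y_t$ — or rather directly to $\delta Y$ after dividing by the positive $\tilde e_t\ge \exp\{-CT(1+\dots)\}$, whose negative moments are again uniformly controlled by \eqref{eq:BMOproperties} — gives $\IE[\sup_{t\le s\le T}|\delta Y_s|^p]\le C_p\varepsilon^p$ uniformly. Finally, for the $\delta Z$ estimate I would apply Itô to $|\delta Y|^2$ on $[t,T]$, use \eqref{assump:FBSDE:lip} to write $|f(r,\Theta^{\varepsilon}_r)-f(r,X^{\varepsilon}_r,Y^{0}_r,0)|\le L^y_r|\delta Y_r|+L^z_r|\delta Z_r|$ with $L^z_r=K(1+2\|Y^0\|_\infty^k+|Z^{\varepsilon}_r|)\in\cHBMO$ uniformly, add $\delta f_r$, apply Young's inequality to absorb $\tfrac12|\delta Z|^2$, exponentiate and take expectations exactly as in Step 2 of the proof of Corollary \ref{maincoro:ch3:aprioriestimate}, and conclude $\IE[(\int_t^T|\delta Z_r|^2\ud r)^p]\le C_p\varepsilon^p$ from the $\delta Y$ estimate already obtained and the uniform BMO bounds.

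**Main obstacle.** The routine parts are the Itô/linearization bookkeeping; the genuine point requiring care is keeping every constant \emph{uniform in $t$, $x$, and $\varepsilon$ simultaneously}. The $x$-uniformity is the subtle one: Lemma \ref{lemma:propertiesofperturbedSDE} as stated gives $\|X^{\varepsilon}-X^{0}\|_{\cS^p}^p\le C_p(1+|x|^p)\varepsilon^p$, which would spoil $x$-uniformity of the final bound; so one must either strengthen that estimate to an $x$-independent $C_p\varepsilon^p$ (true because the difference process solves a linear-type equation driven only by the $O(\sqrt\varepsilon)$ stochastic integral, and Gronwall turns the $O(\sqrt\varepsilon)$ $\cS^p$-norm of that integral — itself bounded by $C_p\sqrt\varepsilon(1+\|X^{\varepsilon}\|_{\cS^p})$, with the $\|X^{\varepsilon}\|_{\cS^p}$ absorbed through the Lipschitz constant — into $C_p\varepsilon$), or argue that the Lipschitz structure of $b$ and $\sigma$ makes the $|x|$-dependence disappear after Gronwall. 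The second delicate point is that the $\widehat\IQ\to\IP$ passage multiplies in the reverse-Hölder constant $C_{p^*}$, which depends on $\|b*W\|_{BMO}$; one must check via \eqref{assump:FBSDE:lip} and the uniform $\cS^\infty$/$\cHBMO$ bounds that $\|b*W\|_{BMO}$, $\|\tilde e_T\|$-type exponents, and $\|L^y\|_{\cHBMO},\|L^z\|_{\cHBMO}$ are all bounded by constants depending only on $K$, $M$, $k$, $T$ and the $\varepsilon$-uniform norm bounds of the preceding proposition — none of which see $t$, $x$ or $\varepsilon$ — so the final $C_p$ is indeed universal.
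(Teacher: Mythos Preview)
Your approach is essentially the paper's: the paper interprets $(Y^{t,x,0},0)$ as a degenerate BSDE solution and invokes Corollary~\ref{maincoro:ch3:aprioriestimate} directly for both the $\delta Y$- and the $\delta Z$-estimates, whereas you unwind that corollary and re-run the linearization/measure-change machinery from scratch. Your vigilance about the $(1+|x|^p)$ factor in Lemma~\ref{lemma:propertiesofperturbedSDE} is well placed --- the paper's short proof passes over exactly this $x$-uniformity point in silence.
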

\begin{proof} Let $p\geq 2$. We start by interpreting the pair $(Y^{t,x,0},0)$ as the solution of a deterministic BSDE. This interpretation allows us to use Corollary \ref{maincoro:ch3:aprioriestimate} along with the uniform boundedness of $Y^{t,x,\varepsilon}$ and the assumptions to obtain
\begin{align*}
& \|Y^{t,x,\varepsilon}-Y^{t,x,0}\|_{\cS^p}^p
\leq C_\beta
\|X^{t,x,\varepsilon}-X^{t,x,0}\|_{\cS^{\beta p}}^p\leq C_p\,\varepsilon^p,
\end{align*}
where $\beta$ is related to the uniformly bounded (in $\varepsilon$) BMO norm of $Z^{t,x,\varepsilon}*W$ and the result follows from Lemma \ref{lemma:propertiesofperturbedSDE}.

From \eqref{eq:aprioriestimateofdeltaZ} and the results already proved we easily get the sought result concerning $\|Z^{t,x,\varepsilon}-Z^{t,x,0}\|_{\cH^p}^p=\|Z^{t,x,\varepsilon}\|_{\cH^p}^p$.
\end{proof}

\subsubsection*{The corresponding perturbed PDE problem}
By solving \eqref{eq:perturbedSDE} and \eqref{eq:perturbedBSDE} one is able to solve, at least in the viscosity sense, the corresponding PDE, namely in view of \eqref{eq:thePDE} and Theorem \ref{theo:proofofviscosity} one has for $(t,x)\in[0,T]\times \IR^m$
\begin{align*}
\partial_t u^\varepsilon (t,x)+
\cL^{\varepsilon} u^\epsilon(t,x) + f\big( t,x,u^\varepsilon(t,x),\sqrt{\varepsilon}(\nabla_x u^{\varepsilon} \sigma)(t,x) \big)&=0,\\
  u^\varepsilon(T,x)&=g(x),
\end{align*}
with $\cL^\varepsilon := (b\cdot\nabla )(t,x) + \frac{\varepsilon}2 \sum_{i,j=1}^m (\sigma\sigma^T)_{i,j}(t,x)  {\partial_{x_i x_j}}$. And as $\varepsilon$ vanishes one is lead to the first order hyperbolic PDE for $(t,x)\in[0,T]\times\IR^m$
\begin{align*}
\partial_t u^0 (t,x)+ (b\cdot\nabla u^0)(t,x)
 + f\big( t,x,u^0(t,x),0 \big)=0,\quad u^0(T,x)=g(x),
\end{align*}
or after a time inversion, $t\mapsto T-t$, with $(t,x)\in[0,T]\times\IR^m$
\begin{align*}
\partial_t u^0 (t,x)= (b\cdot\nabla u^0)(t,x)
 + f\big( t,x,u^0(t,x),0 \big),\quad u^0(0,x)=g(x).
\end{align*}

\subsection{The large deviations result}

We now explain the large deviations principle (LDP). For a deeper analysis on large deviations we point the reader to \cite{FreidlinWentzell1998}, \cite{DemboZeitouni1993},  \cite{FengKurtz2006} and references therein. In this section we extend the results of \cite{DossRainero2007} and \cite{Rainero2006} to another class of FBSDE.

We denote $C^0([a,b],\IR^m)$ (for $0\leq a\leq b<+\infty$) the set of continuous functions on the interval $[0,T]$ with values in $\IR^m$ and we consider in this space the uniform norm
\begin{align}
\label{uniformmetric} \rho_{[a,b]}(\phi):=\sup_{t\in[a,b]}|\phi(t)|.
\end{align}
\begin{defi}
\label{def:LDPprinciple}
The family of processes $(X^{\varepsilon}_t)_{t\in[0,T]}$ depending on a parameter $\varepsilon$ is said to satisfy a \emph{large deviations principle} (LDP) with rate function $S(\Psi)$ if the following conditions hold for every Borel set $A\subset C([0,T])$
\begin{align*}
&\limsup_{\varepsilon\to0} \varepsilon\log\IP[X^\epsilon\in A]\leq \inf_{\Psi\in \text{Cl}(A)} S(\Psi)
\\
&\liminf_{\varepsilon\to0} \varepsilon\log\IP[X^\epsilon\in A]\geq - \inf_{\Psi\in \text{Int}(A)} S(\Psi),
\end{align*}
where Cl\ \!$(A)$ denotes the closure of the set $A$ and Int\ \!$(A)$ denotes the interior of the set $A$.
\end{defi}
Before we state that $X^{t,x,\varepsilon}$ satisfies an LDP, we recall the Cameron-Martin space $H$ of absolutely continuous functions.
\begin{defi}[Cameron-Martin space] Let $k\in\IN$, the space $H([t,T],\IR^k)$ is defined as the space of continuous functions $\phi\in C^0([t,T],\IR^k)$ for which there exists a square integrable function $\dot\phi$ such that $\phi_s= \phi_t+\int_t^s \dot\phi_r\ud r$ for $0\leq t\leq s\leq T$ (i.e. $\phi$ is absolutely continuous). 
\end{defi}
We now make a new assumption, stronger than Assumption \ref{assump:H0:SDE}, for the result stating that $X^{t,x,\varepsilon}$ satisfies an LDP.
\begin{assump}
\label{assump:LDPforX}
Let Assumption \ref{assump:H0:SDE} hold. $b$ and $\sigma$ satisfy $b(t,x)=b(x)$ and $\sigma(t,x)=\sigma(x)$ for all $(t,x)\in[0,T]\times \IR^m$ and are uniformly bounded.
\end{assump}
\begin{theo}[Theorem 5.6.7 of \cite{DemboZeitouni1993}]
\label{theo:LDPforXepsilon}
Let Assumption \ref{assump:LDPforX} hold. Then the solution  $X^{t,x,\varepsilon}$ of \eqref{eq:perturbedSDE} satisfies, as $\varepsilon$ goes to zero, a large deviations principle in the space $C^0([t,T],\IR^m)$ associated to the rate function $I_x$ defined for any $\phi\in C^0([t,T],\IR^m)$ by
\begin{align*}
I_x(\phi)&:=\inf\Big\{
\frac12\int_t^T|\dot{v}_r|^2\ud r,\ v\in H([t,T],\IR^d)\ \text{ such that }
\\
&
\hspace{2.4cm} \phi_s=x+\int_t^s b(\phi_r)\ud r + \int_t^s \sigma(\phi_r)\dot{v}_r\ud r,\ s\in[t,T]
\Big\},
\end{align*}
with the convention that $\inf \varnothing =+\infty$.
\end{theo}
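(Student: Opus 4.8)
The statement is exactly the classical Freidlin--Wentzell small-noise large deviations principle for a diffusion with \emph{time-homogeneous}, Lipschitz and uniformly bounded coefficients, and under Assumption \ref{assump:LDPforX} the hypotheses of Theorem 5.6.7 in \cite{DemboZeitouni1993} are met verbatim; so the shortest route is simply to invoke that reference. For a self-contained argument the plan is as follows. First recall Schilder's theorem: the rescaled Wiener process $\sqrt{\varepsilon}W$ satisfies an LDP on $C^0([t,T],\IR^d)$ equipped with $\rho_{[t,T]}$, with good rate function $\tfrac12\int_t^T|\dot v_r|^2\ud r$ for $v\in H([t,T],\IR^d)$ and $+\infty$ otherwise. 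Were the It\^o solution map $w\mapsto X$ continuous, the contraction principle would finish the proof at once; but, as stressed in the introduction, when $\sigma$ depends on the state this map is only measurable. The strategy is therefore to replace $X^{t,x,\varepsilon}$ by an \emph{exponentially good approximation} to which the contraction principle \emph{does} apply.

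Concretely, introduce the Euler (frozen-coefficient) processes $X^{t,x,\varepsilon,n}$ obtained by freezing $b$ and $\sigma$ on a uniform mesh of $[t,T]$ of width $(T-t)/n$. For frozen coefficients the solution is, on each mesh cell, an affine function of the Brownian increments, so the map $w\mapsto X^{t,x,\varepsilon,n}$ is continuous from $C^0([t,T],\IR^d)$ to $C^0([t,T],\IR^m)$ in the uniform topology; hence $X^{t,x,\varepsilon,n}=\Phi^n(\sqrt\varepsilon W)$ satisfies an LDP with rate $I^n_x$ (the analogously discretised version of $I_x$). The key estimate is the exponential equivalence
\[
\lim_{n\to\infty}\ \limsup_{\varepsilon\to0}\ \varepsilon\log\IP\big[\rho_{[t,T]}\big(X^{t,x,\varepsilon}-X^{t,x,\varepsilon,n}\big)>\delta\big]=-\infty\qquad\text{for every }\delta>0,
\]
which follows by a Gronwall argument for the difference of the two equations together with exponential (Bernstein-type) tail bounds for the martingale parts $\sqrt\varepsilon\int\sigma(X^{t,x,\varepsilon}_r)\ud W_r$ and its frozen counterpart; the latter use the uniform boundedness of $\sigma$ and the exponential supermartingale inequality. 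Given this, the approximation lemma for exponentially good approximations (Theorem 4.2.13 of \cite{DemboZeitouni1993}) upgrades the LDP for $X^{t,x,\varepsilon,n}$ to the LDP for $X^{t,x,\varepsilon}$, and one checks that the limiting rate function is precisely $I_x$ and that it is a good rate function, using boundedness of $\sigma$ and weak compactness of the sublevel sets of $v\mapsto\tfrac12\int_t^T|\dot v_r|^2\ud r$ mapped through the solution operator of the controlled ODE.

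An alternative route is the Girsanov/weak-convergence approach. For the lower bound, given $\phi$ with $I_x(\phi)<\infty$ one picks a near-optimal control $v\in H$, changes measure by $\cE\big(\varepsilon^{-1/2}\int\dot v_r\ud W_r\big)$ (licit since $v\in H$), observes that under the new measure $X^{t,x,\varepsilon}$ converges to $\phi$ in probability, and estimates the Radon--Nikodym density on the exponential scale. For the upper bound one first establishes exponential tightness from the same exponential martingale estimates and then proves the local upper bound by a Girsanov tilt. Either way, the sole genuine difficulty is the one flagged in the introduction --- the lack of continuity of the noise-to-solution map --- and it is circumvented by linearising/freezing the diffusion coefficient and controlling the resulting error in the large-deviations scale.
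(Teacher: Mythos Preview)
Your proposal is correct and in fact goes well beyond what the paper does: the paper gives no proof at all for this statement --- it is quoted verbatim as Theorem~5.6.7 of \cite{DemboZeitouni1993} and treated as an external input, so the ``shortest route'' you mention in your first sentence is exactly the paper's approach. The self-contained sketches you add (Schilder plus Euler freezing with exponential equivalence, or the Girsanov tilt) are standard and sound, and they correctly isolate the only genuine obstruction (non-continuity of the It\^o map when $\sigma$ is state-dependent), but none of this appears in the paper itself.
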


\begin{remark}
\label{remark:Qoperator}
We point out that if $\sigma\sigma^T$ is invertible, then the rate function functional can be written in a simplified manner, more precisely 
\begin{equation*}
I_x(\phi):=\left\{ \begin{array}{cll}
\frac{1}{2} \int_t^T \cQ^*_{\phi_r}[\dot\phi_r-b(\phi_r)] \ud r
&,& \text{if } \phi\in H([t,T],\IR^m)\text{ and }\phi_t=x\\
+\infty&,&\text{otherwise,}
\end{array} \right.
\end{equation*}
where $\cQ^*$ is defined by $\cQ^*_u[v]:=\langle v , (\sigma\sigma^T)^{-1}(u)v \rangle$ for all $u,v\in\IR^m$ .
\end{remark}
We now define a set of operators from and onto the space of continuous functions that, combined with the contraction principle\footnote{In rough, let $\Theta$ and $\Xi$ be two complete separable metric spaces and $f_\varepsilon:\Theta\mapsto \Xi$ a family of continuous functions in those metric spaces. If $\lim_{\varepsilon \to 0}f_\epsilon=f$ exists uniformly over compact sets, then the contraction principle states that if the $\Theta$ valued process $X_\epsilon$, satisfies an LDP with rate function $I$ then $Y_\epsilon=f_\varepsilon(X_\epsilon)$ also satisfies an LDP with the same rate and with a raw rate function $J(y)=I\big(f^{(-1)}(y)\big)$. Moreover, if $I$ is a good rate function then so is $J$. See Section 2 of \cite{Varadhan1984} or Section 4.2 of \cite{DemboZeitouni1993}.} (see Theorem 2.4 in \cite{Varadhan1984} or Theorems 4.2.1 and 4.2.21 of \cite{DemboZeitouni1993}), will allows us to conclude that $Y^{t,x,\varepsilon}$ satisfies an LDP.
\begin{defi}
The operator $F^\varepsilon$ is defined as
\begin{align*}
F^\varepsilon: C^0([t,T], \IR^m)&\to C^0([t,T], \IR)
\\
\phi &\mapsto F^\varepsilon(\phi) = u^{\varepsilon}(\cdot,\phi_\cdot), 
\end{align*}
where for $\varepsilon\in(0,1]$ the function $u^{\varepsilon}$ is defined by $u^\varepsilon(t,x):=Y^{t,x,\varepsilon}_t$ the first component of the solution to \eqref{eq:perturbedBSDE}. And for $\varepsilon=0$, $u^0(t,x)$ is given by $Y^{t,x,0}_t$ solution to \eqref{eq:deterministicBSDE}.
\end{defi}
We observe that for all $0\leq t\leq s\leq T$, $x\in \IR^m$ and  $\varepsilon\in[0,1]$ we have $Y^{t,x,\varepsilon}_s = F^\varepsilon( X^{t,x,\varepsilon})(s) = u^\varepsilon(s, X^{t,x, \varepsilon}_s )$ (see Theorem \ref{theo:FBSDEexistenceanduniqueness}). We now state and prove the result concerning the LDP satisfied by the law induced by $Y^{t,x,\varepsilon}$.
\begin{theo}
\label{theo:LDPforY}
Let Assumption \ref{assump:LDPforX} and \ref{assump:H1:SDEBSDE} hold. Then the process $Y^{t,x,\varepsilon}$ satisfies an LDP in $C^0([t,T],\IR)$ with rate function $\widehat{I}_x$ defined for any $\psi\in C^0([t,T], \IR)$ by
\begin{align*}
\widehat{I}_x(\psi)
&:=
\inf\Big\{
I_x(\phi): \phi\in H([t,T],\IR^m) \text{ such that } \psi=F^0(\phi)
\Big\},
\end{align*}
with the convention that $\inf \varnothing =+\infty$ and where $I_x(\cdot)$ is the rate function from Theorem \ref{theo:LDPforXepsilon}.
\end{theo}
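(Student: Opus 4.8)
The plan is to deduce the LDP for $Y^{t,x,\varepsilon}$ from the LDP for $X^{t,x,\varepsilon}$ of Theorem~\ref{theo:LDPforXepsilon} via the \emph{generalized contraction principle} (Theorem~4.2.21 of \cite{DemboZeitouni1993}, the moving-map version recalled in the footnote above), applied to the representation $Y^{t,x,\varepsilon}_s = F^\varepsilon(X^{t,x,\varepsilon})(s)$, $s\in[t,T]$. Three points must be checked: (i) for every $\varepsilon\in[0,1]$ the map $F^\varepsilon:C^0([t,T],\IR^m)\to C^0([t,T],\IR)$ is continuous; (ii) $F^\varepsilon\to F^0$ uniformly on compact subsets of $C^0([t,T],\IR^m)$ as $\varepsilon\to0$; (iii) $I_x$ from Theorem~\ref{theo:LDPforXepsilon} is a good rate function. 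Point (iii) is classical under Assumption~\ref{assump:LDPforX}, being part of the Freidlin--Wentzell statement in \cite[Thm.~5.6.7]{DemboZeitouni1993}; the work lies in (i) and (ii).

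\emph{Continuity of $F^\varepsilon$.} By the a priori estimate of Corollary~\ref{maincoro:ch3:aprioriestimate} used with constants uniform in $\varepsilon$ (as in Corollary~\ref{coro:conituityofYinx} and the uniform $\cS^\infty$- and $\cHBMO$-bounds on $(Y^{t,x,\varepsilon},Z^{t,x,\varepsilon})$ established above), the constant in $|u^\varepsilon(s,x)-u^\varepsilon(s,x')|\le L|x-x'|$ may be taken independent of $s\in[t,T]$ and of $\varepsilon\in(0,1]$; for $\varepsilon=0$ this is \eqref{eq:uzeroislipschitzinx}. Moreover $u^\varepsilon$ admits a continuous modification on $[t,T]\times\IR^m$ (for $\varepsilon=0$, $u^0\in C^0_b$). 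Hence, if $\phi^n\to\phi$ uniformly, then
\[
\sup_{s\in[t,T]}\big|F^\varepsilon(\phi^n)(s)-F^\varepsilon(\phi)(s)\big|=\sup_{s\in[t,T]}\big|u^\varepsilon(s,\phi^n_s)-u^\varepsilon(s,\phi_s)\big|\le L\,\rho_{[t,T]}(\phi^n-\phi)\longrightarrow 0,
\]
and $s\mapsto u^\varepsilon(s,\phi_s)$ is continuous by joint continuity of $u^\varepsilon$ and compactness of $\phi([t,T])$. So each $F^\varepsilon$ is (Lipschitz) continuous.

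\emph{Uniform convergence $F^\varepsilon\to F^0$.} Evaluating the estimate of Proposition~\ref{prop:convofYepsilontoY} at $s=t$, where $Y^{t,x,\varepsilon}_t=u^\varepsilon(t,x)$ and $Y^{t,x,0}_t=u^0(t,x)$ are deterministic, gives $\sup_{(t,x)\in[0,T]\times\IR^m}|u^\varepsilon(t,x)-u^0(t,x)|\le C\varepsilon$ with $C$ independent of $\varepsilon$. Therefore $\sup_{\phi\in C^0([t,T],\IR^m)}\rho_{[t,T]}\big(F^\varepsilon(\phi)-F^0(\phi)\big)\le C\varepsilon\to0$, so the convergence is actually uniform on the whole space, a fortiori on compacts.

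\emph{Conclusion and main difficulty.} With (i)--(iii) in place, the generalized contraction principle yields that $Y^{t,x,\varepsilon}=F^\varepsilon(X^{t,x,\varepsilon})$ satisfies an LDP on $C^0([t,T],\IR)$ with good rate function $\psi\mapsto\inf\{I_x(\phi):\phi\in C^0([t,T],\IR^m),\,F^0(\phi)=\psi\}$; since $I_x(\phi)<\infty$ forces $\phi\in H([t,T],\IR^m)$ with $\phi_t=x$, this coincides with $\widehat I_x$, which is the assertion. The main obstacle is making sure that the regularity feeding the contraction principle is \emph{uniform in $\varepsilon$} — principally the $(\varepsilon,s)$-uniform Lipschitz bound on $u^\varepsilon(s,\cdot)$ and the joint continuity of $u^\varepsilon$, which are what guarantee continuity of the $F^\varepsilon$ together with the uniform convergence $F^\varepsilon\to F^0$; once these are secured the transfer of the LDP is routine.
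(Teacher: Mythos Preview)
Your proof is correct and follows the same route as the paper's: verify the hypotheses of the generalized contraction principle by establishing continuity of each $F^\varepsilon$ via the $x$-Lipschitz property of $u^\varepsilon$ and uniform convergence $F^\varepsilon\to F^0$ via Proposition~\ref{prop:convofYepsilontoY} evaluated at the initial time. Your remark that the bound in Proposition~\ref{prop:convofYepsilontoY} is already uniform in $(t,x)$, so that $F^\varepsilon\to F^0$ uniformly on all of $C^0$ and not merely on compacts, is a mild sharpening of the paper's argument (which introduces an auxiliary compact image set~$\cI$); and your explicit mention of the goodness of $I_x$ and the reduction of the infimum to $\phi\in H$ with $\phi_t=x$ fills in details the paper leaves implicit.
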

\begin{remark}
If the mapping $\sigma\sigma^T$ is invertible then the rate function $\widehat I_x$ can be expressed in an alternative fashion using the operator $\cQ^*$ introduced in Remark \ref{remark:Qoperator}, namely for $\psi\in C^0([t,T],\IR)$ and $x\in\IR^m$
\begin{align*}
\widehat I_x(\psi) &= \inf\Big\{\, \frac12 \int_t^T \cQ^*_{\phi_r}[\dot\phi_r-b(\phi_r)]\ud r,\ \text{ over }\\
&\hspace{2.2cm} \phi\in H([t,T],\IR^m),\ \phi_t=x,\ \psi_s=u^0(s,\phi_s),\ s\in[t,T]
\, \Big\}.
\end{align*}
\end{remark}
\begin{proof}[Proof of Theorem \ref{theo:LDPforY}] 
This proof uses the well known contraction principle, see Theorem 2.4 in \cite{Varadhan1984}. To apply the contraction principle we need only to show that  $F^\varepsilon$ for $\varepsilon\in[0,1]$ are uniformly continuous operators from $C^0([t,T],\IR^m)$ onto $C^0([t,T],\IR)$, and moreover, that $F^\varepsilon$ converges uniformly to $F^0$ over all compact sets of $C([t,T],\IR^m)$ as $\varepsilon$ vanishes. Notice from Corollary \ref{coro:conituityofYinx} that the mappings $x\mapsto Y^{t,x,\varepsilon}_s$ and $s\mapsto Y^{t,x,\varepsilon}_s$ are  continuous ($\IP$-a.s.).

{\sf Step 1 - Continuity of $F^\varepsilon$:} Let $\varepsilon\in[0,1] $. From \eqref{eq:uislipschitzinx} and \eqref{eq:uzeroislipschitzinx} we have that $x\mapsto u^\varepsilon(t,x)$ is Lipschitz continuous. Let $\cK^0$ be a compact subset of $C^0([t,T],\IR^m)$ and let $(\phi^n)_{n\in\IN}$ be a sequence of functions from $\cK^{0}$ that converge to some $\phi\in \cK^{0}$ in the uniform norm, i.e. $\lim_{n\to\infty} \rho_{[t,T]}(\phi^n-\phi)=0$, where $\rho$ is defined in \eqref{uniformmetric}. From the definition of $F^\varepsilon$, \eqref{eq:uislipschitzinx}, \eqref{eq:uzeroislipschitzinx} and the continuity of $Y^{t,\cdot}_\cdot$ we have 
\begin{align*}
\big[\rho_{[t,T]}\big(F^\varepsilon(\phi^n)-F^\varepsilon(\phi)\big)\big]^2 
&= \sup_{s\in[t,T]}|u^\varepsilon(s,\phi^n_s)-u^\varepsilon(s,\phi_s)|^2
\\
&
= \sup_{s\in[t,T]}|Y^{s,\phi^n_s,\varepsilon}_s-Y^{s,\phi_s,\varepsilon}_s|^2
\\ &
\leq C \sup_{s\in[t,T]}|\phi^n_s-\phi_s|^2 = C\big[\rho_{[t,T]}(\phi^n-\phi)\big]^2\stackrel{n\to\infty}{\longrightarrow} 0.
\end{align*}
Note that the constant $C$ present in the estimate of Corollary \ref{coro:conituityofYinx} is independent of $x$ or $\phi$.

{\sf Step 2 - Convergence over compacts:} To prove that $F^\varepsilon$ converges to $F^0$ uniformly over the compact sets $\cK^0$ of $C^0([t,T],\IR^m)$ we start by defining the set $\cI:=\{\phi_s: \phi\in\cK^0,\ s\in[t,T]\}$, as the set of points consisting of the images of the $\phi \in \cK^0$ in $\IR^m$ for all $s\in[t,T]$. Remark that $\cI$ is a compact set of $\IR^m$.

Now, by Proposition \ref{prop:convofYepsilontoY} (in the particular case of $s=t$) and taking the set $\cI$ into account, there exists a constant $C>0$ such that 
\begin{align*}
&
\sup_{\phi\in\cK^0}\Big[
\rho_{[t,T]}\big(F^{\varepsilon}(\phi)-F^{0}(\phi)\big)\Big]^2
\\
&\quad =\sup_{\phi\in \cK^0} \sup_{s\in[t,T]}
|u^\varepsilon(s,\phi_s)-u^0(s,\phi_s)|^2
=\sup_{\phi\in \cK^0} \sup_{s\in[t,T]}
|Y^{s,\phi_s,\varepsilon}_s-Y^{s,\phi_s,0}_s|^2
\\
&
\quad 
\leq \sup_{x\in \cI} \sup_{s\in[t,T]}
|Y^{s,x,\varepsilon}_s-Y^{s,x,0}_s|^2\leq C\,\varepsilon^2 \to 0\ \ \text{ as }\  \varepsilon \to 0.
\end{align*}
The result now follows from the already mentioned contraction principle.
\end{proof}

\subsection{PDE perturbation and Burgers' equation with damping}
\label{subsec:burgerswithdamping}

Here we want to discuss the following PDE for $\lambda>0$, $a\in\IR$, $x\in\IR$ where $u:[0,T]\times \IR\to \IR$ and $g\in C^0_b$
\begin{align}
\label{eq:burgersperturbationPDE}
\partial_t u^{\varepsilon} + a u^{\varepsilon} \nabla_x u^{\varepsilon} + \lambda u^{\varepsilon} = \varepsilon 
\Delta u^{\varepsilon} 
,\quad u^{\varepsilon}(0,x)=g(x).
\end{align}
Or, after a canonical time inversion $t\mapsto T-t$ for $t\in[0,T]$
\begin{align}
\label{eq:burgersbackwardsperturbationPDE}
\partial_t u^{\varepsilon} + \varepsilon \Delta 
 u^{\varepsilon}=a u^{\varepsilon} \nabla_x u^{\varepsilon} + \lambda u^{\varepsilon}, \quad u^{\varepsilon}(T,x)=g(x).
\end{align}
Via the already discussed Feynman-Kac formula we can write (at least formally) the corresponding FBSDE system for $x\in\IR$ and $0\leq t\leq s\leq T$
\begin{align}
\label{eq:correspondingburgerBDSE}
X_s^{t,x,\varepsilon}&=x+ \sqrt{2 \varepsilon}\, (W_s-W_t),
\\
\nonumber
Y_s^{t,x,\varepsilon}&=g(X_T^{t,x,\varepsilon})
-\int_s^T Z_r^{t,x,\varepsilon}\udw_r
-\int_s^T\Big[ \frac{a\,}{\sqrt{2\varepsilon}}\, {Y_r^{t,x,\varepsilon} Z_r^{t,x,\varepsilon}}  + \lambda Y_r^{t,x,\varepsilon} \Big]\ud r.
\end{align}
The appearance of the $1/\sqrt{2\varepsilon}$ weight is due to the representation $Z=(\nabla_x u \sigma)(\cdot,X)$ with $\sigma=\sqrt{2\varepsilon}$; see \eqref{eq:FBSDEPDErepresentation}. This type of FBSDE still fits in the setting we discussed in the first sections, but the presence of the term $1/\varepsilon$ creates complications if we consider the FBSDE not just for one single fixed $\varepsilon$.

For each fixed $\varepsilon\in(0,1]$ we obtain from Theorem \ref{theo:FBSDEexistenceanduniqueness} existence and uniqueness of the triple $(X^{t,x,\varepsilon}, Y^{t,x,\varepsilon}, Z^{t,x,\varepsilon})$ solving the FBSDE above in $\cS^p\times \cS^\infty\times \cHBMO$ for any $p\geq 2$. The real interesting result is the uniform boundedness in $\varepsilon$ of $\| Y^{t,x,\varepsilon} \|_{\cS^\infty}$.
\begin{lemma}Let $(t,x)\in[0,T]\times\IR$ then in the above defined framework
\[
\sup_{x\in\IR,\,\varepsilon\in(0,1],\,t\in[0,T]}\| Y^{t,x,\varepsilon} \|_{\cS^\infty}<\infty.
\]
\end{lemma}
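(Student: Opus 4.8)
The plan is to exploit the special structure of the driver $f^{\varepsilon}(y,z)=\tfrac{a}{\sqrt{2\varepsilon}}\,yz+\lambda y$: although its coefficients blow up as $\varepsilon\downarrow 0$, the $yz$-term is \emph{linear} in $z$ and can therefore be absorbed by a change of measure. After the measure change the BSDE becomes linear (in $Y$) and its solution admits an explicit Feynman--Kac representation, with all the $\varepsilon$-dependence confined to the forward process $X^{t,x,\varepsilon}$ and to the (measure-dependent, but for the $L^\infty$-bound irrelevant) conditional expectation. This is the probabilistic counterpart of the maximum principle for \eqref{eq:burgersperturbationPDE}.

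First I would fix $\varepsilon\in(0,1]$ and $(t,x)\in[0,T]\times\IR$. By Theorem \ref{theo:FBSDEexistenceanduniqueness} the triple $(X^{t,x,\varepsilon},Y^{t,x,\varepsilon},Z^{t,x,\varepsilon})\in\cS^p\times\cS^\infty\times\cHBMO$ is well defined; in particular $Y^{t,x,\varepsilon}$ is bounded, with a bound that \emph{a priori} depends on $\varepsilon$ through $K=\max\{\lambda,\,|a|/\sqrt{2\varepsilon}\}$. Set the scalar process $\hat b_r:=-\tfrac{a}{\sqrt{2\varepsilon}}\,Y^{t,x,\varepsilon}_r$, which is $\cF$-adapted and bounded, hence $\hat b*W\in BMO$. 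By part~1) of Lemma \ref{lemma:bmoproperties}, $\cE(\hat b*W)$ is a true martingale, so $\ud\IQ^{\varepsilon}/\ud\IP=\cE(\hat b*W)_T$ defines a probability measure and $W^{\IQ^{\varepsilon}}_\cdot=W_\cdot-\int_0^\cdot\hat b_r\,\ud r$ is a $\IQ^{\varepsilon}$-Brownian motion. Substituting $\ud W_r=\ud W^{\IQ^{\varepsilon}}_r+\hat b_r\,\ud r$ into \eqref{eq:correspondingburgerBDSE} and using $Z^{t,x,\varepsilon}_r\hat b_r=-\tfrac{a}{\sqrt{2\varepsilon}}\,Y^{t,x,\varepsilon}_rZ^{t,x,\varepsilon}_r$, the two $yz$-contributions cancel exactly and one is left with the linear BSDE
\[
Y_s^{t,x,\varepsilon}=g(X_T^{t,x,\varepsilon})-\int_s^T Z_r^{t,x,\varepsilon}\,\ud W_r^{\IQ^{\varepsilon}}-\lambda\int_s^T Y_r^{t,x,\varepsilon}\,\ud r ,
\]
the stochastic integral being well defined under $\IQ^{\varepsilon}$ since $Z^{t,x,\varepsilon}\in\cH^2$ and $\hat b$ is bounded.

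Next I would apply It\^o's formula to $s\mapsto e^{-\lambda s}Y_s^{t,x,\varepsilon}$, obtaining $\ud\big(e^{-\lambda s}Y^{t,x,\varepsilon}_s\big)=e^{-\lambda s}Z^{t,x,\varepsilon}_s\,\ud W^{\IQ^{\varepsilon}}_s$, so that $e^{-\lambda\cdot}Y^{t,x,\varepsilon}$ is a \emph{bounded} $\IQ^{\varepsilon}$-local martingale, hence a genuine $\IQ^{\varepsilon}$-martingale. Taking $\IE^{\IQ^{\varepsilon}}[\,\cdot\,|\cF_s]$ yields $Y_s^{t,x,\varepsilon}=e^{-\lambda(T-s)}\,\IE^{\IQ^{\varepsilon}}\!\big[g(X_T^{t,x,\varepsilon})\,\big|\,\cF_s\big]$ for all $s\in[t,T]$. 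Since $\lambda>0$ and $g$ is uniformly bounded by $M$ (Assumption \ref{assump:H1:SDEBSDE}), this gives $|Y^{t,x,\varepsilon}_s|\le e^{-\lambda(T-s)}M\le M$ $\IP$-a.s., i.e. $\|Y^{t,x,\varepsilon}\|_{\cS^\infty}\le M$; as $M$ depends neither on $\varepsilon$ nor on $t,x$, taking the supremum concludes.

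The proof is essentially obstacle-free once the measure change is set up; the two points requiring (routine) care are that $\hat b*W\in BMO$, so that Girsanov is legitimate — immediate from $Y^{t,x,\varepsilon}\in\cS^\infty$ at fixed $\varepsilon$ — and that the local martingale $e^{-\lambda\cdot}Y^{t,x,\varepsilon}$ is truly a martingale, which follows from its boundedness. The conceptual heart of the argument is that the $1/\sqrt{2\varepsilon}$ weight disappears completely under the change of measure, so that the $\cS^\infty$-bound on $Y^{t,x,\varepsilon}$ coincides with the bound one would obtain for the unperturbed linear damping equation $\dot Y=\lambda Y$.
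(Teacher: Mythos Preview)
Your proof is correct and follows essentially the same route as the paper: absorb the $yz$-term via a Girsanov change of measure (legitimate since $Y^{t,x,\varepsilon}\in\cS^\infty$ at fixed $\varepsilon$), then kill the linear $\lambda y$-term by applying It\^o to $e^{-\lambda s}Y_s^{t,x,\varepsilon}$, and read off the bound $\|g\|_\infty$ from the resulting Feynman--Kac representation. Your write-up is in fact slightly more careful than the paper's sketch in that you explicitly justify why the $\IQ^\varepsilon$-local martingale is a true martingale (boundedness).
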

\begin{proof}
From the original existence Theorem \ref{maintheo:ch3:existence} and the lemmata used in its proof, we easily obtain for each fixed $\varepsilon\in(0,1]$ that 
\[
\sup_{x\in\IR,\,t\in[0,T]}\| Y^{t,x,\varepsilon} \|_{\cS^\infty}<\infty.
\]
We sketch the argument concerning the uniform boundedness on $\varepsilon$ since it is based on an argument we have already seen in the proof of Lemma \ref{lemma:exist:uniq:ofYnZn}. We again use the linearization and measure change argument. Define processes $e_t=\exp\{-\lambda t\}$ and $b^\varepsilon_t := a Y^\varepsilon_t/ \sqrt{2\varepsilon}$. Since $Y^\varepsilon$ is a bounded process it is clear that $b^\varepsilon*W\in BMO$ and we can define a new probability measure $\IQ^\varepsilon$ with Radon-Nikodym density $\ud \IQ^\varepsilon/\ud \IP=\cE(b^\varepsilon*W)$, with relation to which $W^\varepsilon_\cdot=W_\cdot-\int_0^\cdot b^\varepsilon_r\ud r$ is a Brownian motion.

Applying It\^o's formula to $e_t Y^\varepsilon_t$ and changing to the measure $\IQ^\varepsilon$ we obtain 
\begin{align*}
e_s Y_s^{t,x,\varepsilon}&=e_T  g(X_T^{t,x,\varepsilon}) - \int_s^T e_r Z^{t,x,\varepsilon}_r \udw^\varepsilon_r
\\
&\quad 
\Rightarrow \quad Y_s^{t,x,\varepsilon} =
e^{-\lambda(T-s)} \IE^{\IQ^\varepsilon}[g(X_T^{t,x,\varepsilon}) |\cF_s]
\\
&\quad
\Rightarrow\quad 
\sup_{x\in\IR}\ \sup_{\varepsilon\in(0,1]}\ \big\|\sup_{s\in[t,T]} |Y_s^{t,x,\varepsilon}|\,\big\|_ {L^\infty} 
\leq \sup_{x\in\IR}|g(x)|<\infty,
\end{align*}
where the last line follows from the uniform boundedness of $g$ and from $e^{-\lambda(T-s)}\leq 1$ for all $s\in[0,T]$. 
\end{proof}
This result hints that the boundedness of the solution carries over to the limit (as $\varepsilon\to 0$). On the other hand, obtaining a similar estimate for $Z^{t,x,\varepsilon}$ in the $\cHBMO$-norm is impossible without extra assumptions. 

First remark that the limiting PDE of \eqref{eq:burgersperturbationPDE} as $\varepsilon\to 0$ is the first order equation:
\begin{align*}
\partial_t v + a v\nabla_x v +\lambda v=0,\quad v(0,x)=g(x),\quad (t,x)\in[0,T]\times\IR.
\end{align*}
For this type of transport PDE it is well known that even with smooth initial boundary data the solution can develop shocks in finite time which translates into an explosion of the spatial derivative in finite time, see \cite{Tersenov2010}. An estimate on $\nabla_x u^{\varepsilon}$ (or $Z$) independent of $\varepsilon$ would mean that the derivative remains stable (non-explosive) as we decrease the \emph{regularizing} impact of the perturbation.

\subsubsection*{Getting help from fully coupled FBSDE}

If one assumes that $\lambda\geq K^2a^2T$ then it is possible to prove via \emph{fully coupled} FBSDE arguments that $\sup_{\varepsilon\in(0,1]}\|\nabla_x u^\varepsilon\|_\infty\leq \sqrt{2}\,K$. This condition is, in some cases, close to that of Theorem 1 in \cite{Tersenov2010} and it roughly means that if a strong enough damping is present in the PDE (via the term $\lambda u^\varepsilon$) then the cross term $u^\varepsilon\nabla_x u^\varepsilon$ does not lead to shocks up to time $T$. We remark though, that  \cite{Tersenov2010} works with bounded domains. 

Even with the mentioned condition these authors have not been able to show the result using the \emph{decoupled FBSDE} theory alone. Notice that one could effectively use the theory developed in Proposition \ref{prop:differentiabilitylemma} (with the appropriate assumptions) to show that the solution $(Y^{t,x,\varepsilon}, Z^{t,x,\varepsilon})$ (of \eqref{eq:correspondingburgerBDSE}) is differentiable in $x$, but the mechanics employed does not lead to estimates on the moments of $Z^{t,x,\varepsilon}$ or $\nabla_x Y^{t,x,\varepsilon}$ that are independent of $\varepsilon$! 

An alternative FBSDE representation of PDE \eqref{eq:burgersbackwardsperturbationPDE} follows by interpreting the term $a u^\varepsilon \nabla_x u^\varepsilon$ not as a component of the BSDE's driver but as a part of the second order differential operator, i.e., as the drift of the forward diffusion (compare FBSDE \eqref{FBSDE:SDE}-\eqref{FBSDE:BSDE} and PDE \eqref{eq:thePDE} along with the operator $\cL$). This leads then to a \emph{fully coupled} FBSDE, namely take $(t,x)\in [0,T]\times\IR^m$ and $s\in[t,T]$
\begin{align*}
X_s^{t,x,\varepsilon}
&
=x-\int_t^s a Y^{t,x,\varepsilon}_r \ud r+ \sqrt{2\varepsilon}\int_t^s1\udw_r,
\\
Y_s^{t,x,\varepsilon}
&
=g(X_T^{t,x,\varepsilon})-\int_s^T \lambda Y^{t,x,\varepsilon}_r\ud r-\int_s^T Z_s^{t,x,\varepsilon}\udw_r.
\end{align*}
We give a quick sketch of the arguments for the \emph{fully coupled case} since the theory differs a bit from what has been presented before. Although the argumentation is not complicated, a full justification would be lengthy (for this already long manuscript); for more on the theory of  \emph{fully coupled FBSDE} we point the reader to \cite{LNiM1702}.

Let us assume that the above coupled FBSDE has a unique solution $(X^{t,x,\varepsilon},Y^{t,x,\varepsilon},Z^{t,x,\varepsilon}) \in  \cS^p\times\cS^\infty\times \cHBMO$, and rather trivially (to see) that  $\|Y^{t,x,\varepsilon}\|_{\cS^\infty}$ and $\|Z^{t,x,\varepsilon}\|_{\cHBMO}$ are uniformly bounded in their parameters. 
For $x,x'\in\IR^m$, $t\in[0,T]$ and $\delta Y=Y^{t,x,\varepsilon}-Y^{t,x',\varepsilon}$ and $\delta Z=Z^{t,x,\varepsilon}-Z^{t,x',\varepsilon}$, applying It\^o's formula to $|\delta Y_s|^2$ for $s\in[t,T]$ and noticing that
\[
|g(X_T^{t,x,\varepsilon})-g(X_T^{t,x',\varepsilon})|^2\leq 
2K^2\Big( |x-x'|^2 + a^2(T-t)\int_t^T |\delta Y_s|^2\uds\Big),
\]
 leads to
\begin{align*}
&\int_t^T |\delta Z_s|^2\uds + 
|\delta Y_t|^2
\\
&\quad  \leq 
2K^2|x-x'|^2+2(K^2a^2(T-t)-\lambda)\int_t^T |\delta Y_s|^2\uds 
-\int_t^T 2\delta Y_s \delta Z_s\udws.
\end{align*}
Assuming that $\lambda \geq K^2a^2T$ and taking $\cF_t$-conditional expectation yields
\[
|\delta Y_t|^2=|u^\varepsilon(t,x)-u^\varepsilon(t,x')|^2\leq 2 K^2|x-x'|^2.
\]
Surely, this result does not imply differentiability in $x$, but with some extra arguments (mollification of the terminal condition then passing to the limit) one easily obtains the spatial differentiability of $Y^{t,x,\varepsilon}$ (and hence of $u(t,x)$). From the above estimate, one would obtain $|\nabla_x u^{\varepsilon}| \leq \sqrt{2}K$ uniformly in $\varepsilon$. This again hints that the estimate would hold in the limit.

\subsection{Incompressible Navier-Stokes}

While in the previous subsection we were forced in the end to work with fully coupled FBSDE in order to obtain the extra result we where searching for, we now give an example which simplifies greatly if one uses the decoupled FBSDE instead of the fully coupled ones. In this section we look at the works by \cite{CruzeiroShamarova2009,CruzeiroShamarova2010} where they present a connection between FBSDE and the incompressible Navier-Stokes equation.

More than being a standard PDE, the Navier-Stokes equation describes a conservation law (see the equations below) and it is this conservation law that is the hard part in the FBSDE representation. We saw in Section \ref{section:FBSDEandPDE} that the FBSDE yields solutions to PDE but the  conservation of volume condition (i.e. div$(u)=\langle\1,\nabla_x u\rangle=0$, see \eqref{eq:NS}) has to be verified. One does not obtain it as a side product of the existence of solution.

Following \cite{CruzeiroShamarova2009,CruzeiroShamarova2010} we look at the incompressible Navier-Stokes equation with $u:[0,T]\times \IR^n\to \IR$ satisfying for $(t,x)\in[0,T]\times \IR^n$
\begin{align}
\label{eq:NS}
\partial_t u &= - u \langle\1,\nabla_x u\rangle + \nu \Delta u-\nabla_x p,\ \ \text{div } u=0,
\\
\nonumber
u(0,x)&=h(x),\quad \text{div } h=0,
\end{align}
where $\nu>0$, div$(u):=\langle \1,\nabla_x u\rangle$ and $h\in C^1_b(\IR^n)$. The function $p$ denotes the pressure field which is assumed to satisfy $\nabla_x p=K$ for some constant $K$. Throughout we work with $n=2$ but the results presented here are easily extended to the general case.

In view of the calculations carried out in Subsection \ref{subsec:burgerswithdamping} the FBSDE corresponding to the PDE \eqref{eq:NS} (after a time reversal) is for $(t,x)\in[0,T]\times\IR^2$ 
\begin{align*}
X_s^{t,x}&=x+ \sqrt{2 \nu}\, (W_s-W_t),\quad s\in[t,T]
\\
Y_s^{t,x}&= h(X_T^{t,x})-\int_s^T Z_r^{t,s}\ud r -\int_s^T \big[ K+\frac{1}{\sqrt{2\nu}}Y^{t,x}_r \langle \1,Z_r^{t,x}\rangle\big]\ud r,
\end{align*}
where $W=(W^1,W^2)$, $X=(X^1,X^2)$ and $Z=(Z^1,Z^2)$.

From Theorems \ref{maintheo:ch3:existence}, \ref{maintheo:ch3:uniqueness} and \ref{theo:proofofviscosity} easily follows existence and uniqueness of a solution $(X,Y,Z)$ to the FBSDE in $\cS^p\times \cS^\infty\times \cHBMO$ (for any $p\geq 1$) as well as its connection to the PDE. Moreover since $h\in C^1_b$, Proposition \ref{prop:differentiabilitylemma} also holds, yielding the existence of $(\nabla_x Y^{t,x},\nabla_x Z^{t,x})\in \cS^p\times \cH^p$ for $p\geq 2$. 

One of the difficulties in \cite{CruzeiroShamarova2009} was showing that $Y$ satisfied the free divergence condition, namely that for $u(t,x):=Y^{t,x}_t$ the function $u$ satisfies div$(u)=0$ for all $(t,x)\in[0,T]\times\IR^2$. For the case we present here, this is considerably easier than \cite{CruzeiroShamarova2009}. We have 
\begin{align*}
\nabla_x Y^{t,x}_s&= (\nabla_x h)(X_T^{t,x})\nabla_x X_T^{t,x} - \int_s^T \nabla Z^{t,x}_r\ud W_r
\\
&\qquad 
-\int_s^T \frac{1}{\sqrt{2\nu}} \Big[\nabla_x Y_r^{t,x} \langle \1, Z_r^{t,x}\rangle + Y_r^{t,x} \nabla_x \big(\langle \1, Z_r^{t,x}\rangle\big)
\Big]\ud r,
\end{align*}
where $\nabla_x X_s=(1,1)$. Then, div$(u)=0$ is found by looking at the processes  $U:=\langle\1,\nabla_x Y^{t,x}\rangle$ and $V^i:= \langle \1,\nabla_x Z^{t,x,i}\rangle$ for $i\in\{1,2\}$. We have then
\begin{align*}
U_s = \big(\text{div}(h)\big)(X_T^{t,x})-\int_s^T V_r\ud W_r -\int_s^T\Big[
\frac{\langle\1,Z_r^{t,x}\rangle}{\sqrt{2\nu}} U_r + 
\frac{Y_r^{t,x}}{\sqrt{2\nu}} \langle\1,V_r\rangle
\Big]
\ud r.
\end{align*}
Now since div$(h)=0$ and using the linearization and measure change argument, i.e. for $s\in[t,T]$ define  $b_s:={Y_s^{t,x}}/{\sqrt{2\nu}}$, a probability measure $\IQ$ with Radon-Nikodym density $\cE(b*W)$ 
and $e_s:=\exp\{\int_t^s \langle\1,Z_r^{t,x}\rangle/\sqrt{2\nu}\ud r\}$. Note that since $Y\in \cS^\infty$ and $Z\in\cHBMO$ the processes $b$ and $e$ are well defined, see for instance the arguments used in the proof of Theorem \ref{maintheo:ch3:uniqueness}.

We obtain then
\[
U_s = -e_s^{-1}\int_s^T e_r V_r\ud W_r^\IQ
\ \ \Rightarrow \ \ 
U_s=0\ \ \IQ \text{-a.s. for all }s\in[t,T].
\]
In particular, $U_t=\text{div}(Y^{t,x}_t)=0$ as we sought. This proves that the solution of the FBSDE provides a mild solution to the Navier-Stokes equation \eqref{eq:NS}. 
\vspace{0.5cm}

{\bf Acknowledgment:} We thank Peter Friz and Peter Imkeller for their helpful comments. Christoph Frei gratefully acknowledges financial support by the Natural Sciences and Engineering Research Council of Canada through grant 402585. Gon{\c c}alo dos Reis was partially supported by the CMA/FCT/UNL, under the project PEst-OE/MAT/UI0297/2011.

\appendix
\section{Proof of Proposition \ref{prop:differentiabilitylemma}}
\begin{proof}[Proof of Proposition \ref{prop:differentiabilitylemma}]
The results concerning the forward diffusion are standard, see for instance Section 1.2.4 of \cite{dosreis2011}.

This proof has 3 steps. We first establish a candidate for the said derivative process, then we prove that the candidate is the correct one and we finish by showing continuity of the derivatives in the spatial component. Throughout let $j\in\{1,\cdots,m\}$ and $t\in[0,T]$.

{\sf Step 1 -  The candidate for the partial derivatives:}
A quick look at the BSDEs composing the lines of \eqref{FBSDE:partialdifferentiatedFBSDE} shows that the lines are independent of each other. So, in view of the assumptions and the already obtained results concerning $Y^x$ and $Z^x$ it easily follows that for \emph{each line} of the system \eqref{FBSDE:partialdifferentiatedFBSDE} Assumptions A1--A4 of \cite{BriandConfortola2008} hold. Hence we can apply their results to conclude that \eqref{FBSDE:partialdifferentiatedFBSDE} has a unique solution $(\nabla_{x_j} Y^x,\nabla_{x_j} Z^x)\in\cS^p \times \cH^p$ for all $p\geq 2$ and $j=\{1,\cdots,m\}$. These processes are the natural candidates for the $j$-th partial derivatives of $Y^x$ and $Z^x$.

{\sf Step 2 -  The candidates are the correct ones:} We now need only to prove that the solution of \eqref{FBSDE:partialdifferentiatedFBSDE} corresponds indeed to the partial derivative of $(Y^x,Z^x)$. To this end we need only to show that for any $h>(0,1]$ such that we have for all $p\geq 2$:
\[
\lim_{h\to 0} \bigg\{
\Big\|
\frac{Y^{x+he_j}-Y^x}{h}-\nabla_{x_j}Y^x
\Big\|_{\cS^p}
+
\Big\|
\frac{Z^{x+he_j}-Z^x}{h}-\nabla_{x_j}Z^x
\Big\|_{\cH^p}
\bigg\}=0.
\]
Again we do not give the details, the proof of this result follows from a straightforward combination of the mechanics employed in the proof of Corollary \ref{maincoro:ch3:aprioriestimate} and the techniques used to prove Theorem 2.1 of \cite{AnkirchnerImkellerReis2007} (alternatively see Section 3 of \cite{dosreis2011}).

{\sf Step 3 -  Total differentiability and the continuity estimate:} After having established that the partial derivatives exist and satisfy good integrability properties, it remains only to argue in favor of total differentiability of the map $x\mapsto Y^x$. This will be done by showing that the partial derivatives are indeed continuous at any point $x$ (in the topology of $\cS^p\times\cH^p$).
We do this by proving the limit result on the difference of gradients. 

This estimate is easy to obtain as it follows from the techniques used throughout Section 3. We do not give the full details. Let $t\in[0,T]$, $x,x'\in\IR^m$ then the BSDE for the difference $\delta \nabla Y=\nabla_x Y^{t,x}-\nabla_x Y^{t,x'}$ (we define as well $\delta \nabla Z=\nabla_x Z^{t,x}-\nabla_x Z^{t,x'}$) is given by
\begin{align}
\label{eq:aux:pseudocontinuitynablaY}
\delta \nabla Y_s= \delta \eta -\int_s^T \delta \nabla Z_r \udw_r+ \int_s^T [F_s+K^{\cY}_s\delta \nabla Y_r + K^{\cZ}_s\delta \nabla Z_r ]\ud r,
\end{align}
with $\delta \eta =(\nabla_x g)(X^x_T) \nabla_x X^x_T-(\nabla_x g)(X^{x'}_T) \nabla_x X^{x'}_T$, $K^{\cY}= \nabla_y f(\cdot,\Theta^x_\cdot)$, $K^{\cZ}= \nabla_z f(\cdot,\Theta^x_\cdot)$ and $\nabla_x\Theta^{x'}=(\nabla_x X^{x'}, \nabla_x Y^{x'}, \nabla_x Z^{x'})$ we define the process  $F:=\nabla_x f(\cdot,\Theta^x)(\nabla_x X^x-\nabla_x X^{x'})+\langle \nabla_x f(\cdot,\Theta^x)-\nabla_x f(\cdot,\Theta^{x'}) , \nabla_x \Theta^{x'}\rangle$.

The results we have seen so far easily imply (see Theorem \ref{theo:FBSDEexistenceanduniqueness}) that $K^{\cY}$ and $K^{\cZ}$ belong to $\cHBMO$ with norm uniformly bounded in $x$. From this structure it is easy to see that the technique used to prove Corollary \ref{maincoro:ch3:aprioriestimate} can be applied to this situation, yielding for a $\beta>1$ related to the BMO norms of the relevant processes and a constant $C$ independent of $x,x'$ that
\begin{align*}
\|\delta \nabla Y\|_{\cS^p}^p
&\leq 
C\IE\Big[\,|\delta \eta|^{p\beta} + \Big(\int_0^T |F_s|\uds \Big)^{p\beta}\Big]^{\frac1{\beta}}.
\end{align*}
We point out that although \eqref{eq:aux:pseudocontinuitynablaY} is a multidimensional equation and our trick consists of a measure change, the measure change is uniform across all entries of $\nabla Y$, namely for each entry it is always the process $K^{\cZ}*W$ that is at the origin of the measure change.

Given the already proved (see Corollary \ref{coro:conituityofYinx}) continuity of $X^x$, $Y^x$ and $Z^x$ in $x$, the continuity of the derivatives of $f$ (by assumption), one can easily conclude via the dominated convergence theorem the sought result. This type of arguments have been used in Section 3.1 of \cite{dosreis2011} or \cite{ImkellerDosReis2010}.
\end{proof}

\section{Proof of Theorem \ref{theo:proofofviscosity}}
\begin{proof}[Proof of Theorem \ref{theo:proofofviscosity}]
Let $(t,x)\in[0,T]\times \IR^m$. We have seen in previous results that the process $Y^{t,x}$ has continuous path as well as a continuous dependency in $x$ (see Corollary \ref{coro:conituityofYinx}). Uniform boundedness of $u$ follows from that of $Y$ implied by Proposition \ref{theo:FBSDEexistenceanduniqueness}.

To prove that $u$ is a viscosity solution we need to prove that it is a viscosity sub- and supersolution. We prove only that it is a subsolution since the supersolution part of the proof if very similar.

To prove that $u$ is a viscosity subsolution, let $\varphi \in C^{1,2}([0,T]\times \IR^m,\IR^m)$ and take $(\bar{t},\bar{x})\in[0,T]\times \IR^m$ such that $(\bar{t},\bar{x})$ is a local maximum of $(u-\varphi)$. We assume that without loss of generality that they touch, i.e. that $u(\bar{t},\bar{x}) = \varphi( \bar{t}, \bar{x} )$. Suppose that\footnote{Compare with the definition of viscosity subsolution: Take $u\in C([0,T]\times\IR^m, \IR)$, $\varphi\in C^{1,2}([0,T]\times \IR^d)$ and $(t,x)\in[0,T]\times \IR^m$. Then $u$ is a viscosity \emph{subsolution} of \eqref{eq:thePDE} if for all $x$ it holds that $u(T,x)\leq g(x)$ and if for an $\varphi$ and a pair $(t,x)$ which is a local maximum of $u-\phi$, it holds that $-\partial_t \varphi(t,x)-\cL \varphi (t,x) - f\big(t, x, u(t,x), (\nabla \varphi \sigma)(t,x)\big) \leq 0$.
} 
\[
\partial_t \varphi(\bar{t},\bar{x})+\cL \varphi (\bar{t},\bar{x}) + f\big(\bar{t},\bar{x}, u(\bar{t},\bar{x}), (\nabla \varphi \sigma)(\bar{t},\bar{x})\big) < 0,
\]
and we now argue by contradiction.

Since $[0,T]\times \IR^m$ is a separable space we can define a neighborhood of $(\bar{t},\bar{x})$ in the following way: let $0< \alpha \leq T-\bar{t}$ be such that for all $\bar{t}\leq s\leq \bar{t}+\alpha$, $|y-\bar{x}|\leq \alpha$, $u(s,y)\leq \varphi(s,y)$ and 
\begin{align}
\label{eq:viscosolutionstrict}
\partial_t \varphi (s,y)+\cL \varphi (s,y) + f\big(s,y, u(s,y), (\nabla \varphi \sigma)(s,y)\big) &< 0,
\end{align}
and define
\[
\tau = \inf\{s\geq \bar{t}:\, |X^{\bar t, \bar x}_s-\bar x|\geq \alpha\} \land (\bar t + \alpha).
\]
The pair
\[
(\widetilde Y_s, \widetilde Z_s):=\big (Y^{\bar t, \bar x}_{s\land \tau}, \indicfunc_{[\bar t,\tau]}(s) Z^{\bar t, \bar x}_{s\land \tau} \big),\quad s\in[\bar t,\bar t+\alpha]
\]
clearly solves the BSDE on $[\bar t, \bar t + \alpha]$
\[
\widetilde Y_s = u(\tau, X_\tau^{\bar t,\bar x})
+\int_s^{\bar t+\alpha} \indicfunc_{[\bar t,\tau]}(r)f\big(r,X_r^{\bar t,\bar x}, u(r,X_r^{\bar t,\bar x}) , \widetilde Z_r\big)\ud r-\int_s^{\bar t + \alpha} \widetilde{Z}_r\udw_r.
\]
On the other hand, since $\varphi\in C^{1,2}$ we have by It\^o's formula that 
\[
(\widehat Y_s, \widehat Z_s):=\big (\varphi(s, X^{\bar t, \bar x}_{s\land \tau}, \indicfunc_{[\bar t,\tau]}(s) (\nabla \varphi \sigma)(s,X^{\bar t, \bar x}_{s\land \tau}) \big),\quad s\in[\bar t,\bar t+\alpha]
\]
solves the BSDE for $s\in[\bar t,\bar t+\alpha]$
\[
\widehat Y_s = \varphi(\tau, X_\tau^{\bar t,\bar x})
-\int_s^{\bar t+\alpha} \indicfunc_{[\bar t,\tau]}(r)\big( \partial_t  \varphi +\cL \varphi\big)(r,X_r^{\bar t,\bar x})\ud r-\int_s^{\bar t + \alpha} \widehat{Z}_r\udw_r.
\]
At this point we need only to use a comparison result that allows us to conclude that $\widetilde{Y}<\widehat{Y}$ and hence that $u(\bar t, \bar x)< \varphi(\bar t, \bar x)$ to find the contradiction with our assumption that $u(\bar t,\bar x)= \varphi(\bar t, \bar x)$. Unfortunately we cannot apply directly our comparison Theorem \ref{maintheo:ch3:comparison}, because we do not know if the BSDE for $(\widehat Y,\widehat Z)$ satisfies its assumptions.

We start by remarking that since $\varphi \in C^{1,2}$ and within the neighborhood around $(\bar t,\bar x)$ defined above,  $\varphi(r,X_r^{\bar t,\bar x})$ and $(\partial_t \varphi + \cL \varphi)(r,X_r^{\bar t,\bar x})$ are continuous functions on a compact set and hence bounded! This allows us to trivially conclude (e.g.~Theorem \ref{theo:2.3Koby2000}) that $(\widehat Y, \widehat Z)\in\cS^\infty\times \cHBMO$ on the time interval $[\bar t, \bar t+\alpha]$. It is also clear that $(\widetilde Y, \widetilde Z)$ belongs to $\cS^\infty\times \cHBMO$ simply because it is the solution of \eqref{FBSDE:SDE}, \eqref{FBSDE:BSDE}. The BSDE for the difference $\widetilde Y-\widehat Y$ is
\begin{align*}
\widetilde Y_s - \widehat Y_s 
&=u(\tau, X_\tau^{\bar t,\bar x})- \varphi(\tau, X_\tau^{\bar t,\bar x})
-\int_s^{\bar t + \alpha} \widetilde{Z}_r-\widehat{Z}_r\udw_r
\\
& 
+\int_s^{\bar t+\alpha} \indicfunc_{[\bar t,\tau]}(r)\big[
f\big(r,X_r^{\bar t,\bar x}, u(r,X_r^{\bar t,\bar x}) , \widetilde Z_r\big) + \big( \partial_t  \varphi +\cL \varphi\big)(r,X_r^{\bar t,\bar x})\big]\ud r.
\end{align*}
Adding and subtracting a term $f\big(r,X_r^{\bar t,\bar x}, u(r,X_r^{\bar t,\bar x}) , \widehat Z_r\big)$ to the driver of the BSDE for the difference $\widetilde Y - \widehat Y$ we can apply arguments similar to those of the proof of Theorem \ref{maintheo:ch3:comparison} and conclude from the fact that $u\leq \varphi$ and the strictness of \eqref{eq:viscosolutionstrict} that $\widetilde Y < \widehat Y$, i.e. $u(\bar t,\bar x)<\varphi(\bar t,\bar x)$. This contradicts the assumption and hence we can conclude that $u(t,x)=Y^{t,x}_t$ is indeed a viscosity subsolution of the PDE. 

The arguments to show that $u$ is a supersolution are similar. A combination of the two resutls yields that $u$ is a viscosity solution of \eqref{eq:thePDE}.1
\end{proof}

\bibliographystyle{alpha}



\end{document}